\documentclass[11pt,a4paper,reqno]{amsart}

\usepackage[latin1]{inputenc}
\usepackage{amssymb,amsmath,amstext,amsthm,amsfonts}
\usepackage{graphicx}
\usepackage{comment}
\usepackage{multicol}
\usepackage{subfigure}
\usepackage{hyperref}
\usepackage{tikz-cd}
\usepackage{calrsfs}
\usepackage[shortlabels]{enumitem}
\usepackage{bbm}
\usepackage{float}
\usepackage{bm}
\usepackage{geometry}
\DeclareMathAlphabet{\pazocal}{OMS}{zplm}{m}{n}

\numberwithin{equation}{section}


\usepackage{a4wide}
\setlength{\textheight}{22.5cm}

\usepackage{xcolor}

\newcommand{\R}{\mathbb{R}}
\newcommand{\Z}{\mathbb{Z}}

\newcommand{\normi}[1]{{\left\vert\kern-0.25ex\left\vert\kern-0.25ex\left\vert #1 
    \right\vert\kern-0.25ex\right\vert\kern-0.25ex\right\vert}}

\newcommand{\qand}{\quad\text{and}\quad}
\newcommand{\sgn}{\text{sgn}}
\newcommand{\Leb}{\text{Leb}}

\newcommand{\cor}{\text{Cor}}
\newcommand{\e}{\text{e}}

\newcommand\smallO{
	\mathchoice
	{{\scriptstyle\mathcal{O}}}
	{{\scriptstyle\mathcal{O}}}
	{{\scriptscriptstyle\mathcal{O}}}
	{\scalebox{.7}{$\scriptscriptstyle\mathcal{O}$}}
}

\def\diam{\operatorname{diam}}

\def\id{\operatorname{id}}

\newtheorem{maintheorem}{Theorem}

\newcommand{\cmt}{\begin{maintheorem}}
\newcommand{\fmt}{\end{maintheorem}}

\newtheorem{maincorollary}[maintheorem]{Corollary}

\newcommand{\cmc}{\begin{maincorollary}}
\newcommand{\fmc}{\end{maincorollary}}

\newtheorem{lemma}{Lemma}[section]

\newtheorem{proposition}[lemma]{Proposition}

\theoremstyle{remark}
\newtheorem{remark}[lemma]{Remark}

\newtheorem{definition}{Definition}

\makeatletter
\@namedef{subjclassname@2020}{\textup{2020} Mathematics Subject Classification}
\makeatother

\thanks{OE was partially supported by CMUP 
(UIDB/00144/2020)
and PTDC/MAT-PUR/4048/2021, which are funded by FCT (Portugal) with national (MEC) and European structural funds through the programs COMPTE and FEDER, under the partnership agreement PT2020.
}
\keywords{Linear response, Intermittent maps, Statistical stability}
\subjclass[2020]{37A10, 37C40, 37C75, 37D25, 37C30, 37E10}



\begin{document}
\title[Linear response for intermittent circle maps]{Linear response for intermittent circle maps}
\date{}

\author[O. Etubi]{Odaudu  Etubi}
\address{Odaudu  Etubi\\
Centro de Matem\'{a}tica da Universidade do Porto\\ Rua do Campo Alegre 687\\ 4169-007 Porto\\ Portugal.}
\email{etubiodaudu@gmail.com} 

\maketitle




\begin{abstract}
Using the cone technique of Baladi and Todd \cite{BT16}, we show some form of weak differentiability of the SRB measure for the intermittent circle maps, demonstrating {\it{linear response}} in the process. Subsequently, as an application, we lift the regularity from the base dynamics of the solenoid map with intermittency, showing that this family is {\it{statistically stable}}. 
\end{abstract}



 \section{Introduction}
 Intermittent behaviour in dynamical systems is characterized by orbits spending long times in a small area of the phase space and then a spontaneous burst of chaoticity on the remainder of the phase space.
A prototype for this was given in the seminal paper by Pomeau and Manneville \cite{PM80}, a simplification which was modeled by the Liverani, Saussol and Vaienti (LSV) map in \cite{LSV98} characterized by the presence of a neutral fixed point at the origin.
Young
introduced an intermittent circle map as an application 
to the statistical properties and rate of mixing 
of the general systems studied in \cite{YL99}, characterizing it
in terms of some local properties around the neutral fixed point at $0$ and $1$, see section \ref{se.inte} for more details.
These classes of maps are shown to admit an \emph{absolutely continuous invariant probability measure~(ACIP)} (or \emph{Sinai-Ruelle-Bowen (SRB) measures}) \cite{P80} and decay at a slow rate \cite{H04, LSV98, YL99}.
%

Let $\{f_\alpha\}_{\alpha \in \mathcal{A} }$ be a parameterised family of maps, where $\mathcal{A}$ is the parameter space. Suppose that $\mathcal{A}$ is a neighbourhood of $0$, with $f_0 = f$
and perturbed whenever  $\alpha \ne 0$, with each $f_\alpha$ possessing a unique \emph{SRB} measure $\mu_\alpha$.
The topic of the regularity of $\alpha \mapsto \mu_\alpha$ is an active area of research in
smooth ergodic theory. The study has been carried out under varying degrees, namely;
the continuity with respect to the parameters (called \emph{statistical stability}) was studied in \cite{ACF10, APV17,AV02}.
One way that the question of statistical stability has been posed in the literature is to ask whether the perturbed density $h_\alpha$ converges to the unperturbed density $h_0$ in the $L^1$-norm, this is usually 
known as {\it{strong statistical stability}}. However, the notion of statistical stability we shall consider in this paper is given in terms of the weak$^*$ convergence of the measures $\mu_{\alpha_n}$ to $\mu_{\alpha_0}$ as $\alpha_n \to \alpha_0$, and sometimes it is referred to as {\it{weak statistical stability}}, in this paper, without making any distinction, we shall simply call it statistical stability.
 In some other cases, under some perturbation of the parameter, the invariant density
varies H\"older continuously \cite{T14}, log-Lipschitz continuously \cite{AE25, GL20}, or
is differentiable. In the last case, we say that the system admits \emph{linear response}. Another formulation is that for $\psi$ a fixed observable in a suitable class, linear response holds if $\alpha \mapsto \mathcal{R}_\psi= \int \psi \,d\mu_\alpha$, $\alpha \in \mathcal{A}$
is differentiable. 
If the formula for this derivative exists in terms of the unperturbed terms of $f_0$, $\mu_0$, $\psi$ and the vector field $v_0:=\partial_\alpha f_\alpha|_{\alpha=0}$, then we call this the linear response formula.

The notion of linear response has been around for quite some time in statistical mechanics. A pioneering result in this area has been the work of Ruelle for Axiom A attractors \cite{R97} and in the Anosov case \cite{KKPW89}, and for system with exponential decay of correlation or at least summable decay of correlations \cite{B07,BS08, BL07, D04, GL06, R97}, and in random dynamical systems \cite{BR20}. However, 
linear response have been reported to fail in some cases \cite{B07,B14, BS08}. For cases of systems which decays rather slowly, linear response have been shown for the Pomeau-Manneville type maps, particularly, system containing an intermittent fixed point albeit using different techniques \cite{BS16,BT16, K16}. The linear response given in \cite{ BT16, K16} was given in the weak sense, since
obtaining  linear response in the strong form, that is, in some topology \cite{ BG24, BR20, BS16}, 
is not always possible.  The technique used by Bahsoun-Saussol in \cite{BS16} is based on the construction of a well-known inducing
scheme with a first return time and an explicit formula for the density of the ACIP in terms of the density of the induced map's ACIP --none of which is known in the case of the intermittent circle map.
%
In this paper, we study the linear response 
for the intermittent circle map introduced in \cite{YL99} using 
an interesting observation made in the case of the LSV map in \cite{BT16} that the singularity of the density and that of its derivative at zero were compensated by suitable factors (which we shall introduce in section \ref{se.mech}). Their analysis was based on the cone techniques used in the LSV paper \cite{LSV98}. 
Here, we apply their approach to the intermittent circle map, which posses the neutral fixed point in the first and last branch, 
%
We show
that for any $\psi \in L^q, q \ge 1$, 
$\mathcal{R}_\psi(\alpha)$ is
differentiable on $\alpha \in \left[0, 1-1/q\right)$. 

One interesting application of the class maps introduced in \cite{YL99} 
is the solenoid map with intermittency introduced in \cite{AP08},
where the $2x \!\!\! \mod 1$ map in the base dynamics  of the classical solenoid map was replaced by the intermittent circle map.
The linear response result in the base map of this dynamics implies statistical stability and using the techniques from Alves and Soufi in \cite{AS14}, we show the the SRB measure for this skew product is in fact statistically stable. It remains an interesting open problem to determine whether linear response can be derived for the solenoid map.

 \section{The intermittent circle map} \label{se.inte}
 Let $f(x)$ and $g(x)$ be real valued functions, we write $f(x) \lesssim g(x)$ (resp. $f(x) \approx g(x)$) to mean that there exists a uniform constant $C\ge 1$, such that $f(x) \le C\,g(x)$  (resp. $f(x) \in [{C}^{-1}\,g(x), {C}\,g(x)]$) for all $x$. 
  In what follows, for functions depending on both $x$ and the parameter $\alpha$, we write the partial derivative with respect to the parameter as $\partial_\alpha$ and the derivative with respect to $x$ as $(\cdot)^\prime$.
 
 Let $f_\alpha:S^1 \rightarrow S^1$, $S^1=\R/\Z$, be a degree $d\ge2$ circle map with $\alpha>0$ satisfying:
 \begin{enumerate}[label=\text{(s${\arabic*}$)}]
 	\item  \label{s1} $f_\alpha(0)=0$ and $f_\alpha^\prime(0)=1;$
 	\item \label{s2} $f_\alpha^{\prime}>1$ on $S^1\setminus \{0\};$
 	\item \label{s3} $f_\alpha$ is $C^2$ on $S^1\setminus \{0\}$ and $xf_\alpha^{\prime \prime}(x) \approx \vert x \vert ^\alpha$, for $x$ close to $0$.
 \end{enumerate}
 
 By \ref{s1} and \ref{s3},
 \begin{align}
 	\begin{split}
 		f_{\alpha}^{\prime} (x)-1  & \approx \vert x \vert ^{\alpha},\label{eq:firstderivative}
 	\end{split}
 \end{align}
 integrating the above,
 \begin{equation}
 	f_{\alpha}(x) \approx x+\sgn(x) \vert x \vert ^{\alpha+1}, \label{eq:zerothderivative}
 \end{equation}
 where $\sgn(\cdot)$ is the signum function,
 with $\displaystyle \sgn(x)=\frac{x}{\vert x \vert}= \frac{\vert x \vert}{x}$, for $x \ne 0$. A map satisfying the condition \ref{s1} is said to have an \emph{indifferent (or neutral) fixed point}.

 \subsection{Asymptotic behaviour near the fixed point} \label{ABNIP}
 Restricting ourselves to $f_\alpha\vert_{[0,\varepsilon_0]}$ (resp. $f_\alpha\vert_{[\varepsilon^\prime_0, 0]}$), where $(0, \varepsilon_0]$ (resp. $[\varepsilon^\prime_0,0)$) is an interval where the conditions \ref{s1} and \ref{s3} holds. When we say a degree $d$ map, we mean that there exists an $m\!\! \mod 0$ partition of $S^1$, $\{I_1, \dots, I_d\}$ into open intervals, 
 such that $f_\alpha|_{I_i}: I_i \to S^1\setminus \{0\}$ is a diffeomorphism, for each $1 \le i \le d$. 
 Let $0\sim 1$ in such a way that $0$ is the infimum of $I_1$ and the supremum of $I_d$.
 The intervals $I_1$ and $I_d$ are further partitioned into countably many subintervals $J_n$ and $J_n^\prime$ respectively as follows; define the sequences $(z_n)_n$ and $(z_n^\prime)_n$ as 
 \begin{equation*}
 	f_\alpha(z_{n+1})=z_n \text{ and } f_\alpha(z^\prime_{n+1})= z^\prime_{n}, \quad n \ge 0, z_0 \in (0, \varepsilon_0], z^\prime_0 \in [\varepsilon^\prime_0,0).
 \end{equation*}
 
 Set for each $n \ge 1$,
 	$J_n=(z_n, z_{n-1})
 	\text{ and }
 	J^\prime_n=(z^\prime_{n-1},z^\prime_n).$
 The dynamics is related to the subintervals as follows
 	$f_\alpha(J_{n+1})=J_{n}
 	\text{ and }
 	f_\alpha(J^\prime_{n+1})=J^\prime_{n}.$
 The local analysis in the interval containing the intermittent fixed point is given by $z _n \approx n^{-1/\alpha}$ \cite{JFA20, YL99}.
 By 
 a theorem due to Thaler \cite[Theorem 1]{TM80}, there exists positive  constants $c_1, c_2$ such that the invariant density of $f_\alpha$
 is bounded as follows
 \begin{equation}\label{eq:dens.bound}
 	c_1 \vert x\vert^{-\alpha} \le h_\alpha(x) \le c_2 \vert x \vert^{-\alpha}, \quad x \in S^1\setminus \{0\}, c_2 \ge c_1>0,
 \end{equation}
 The density concentrating around the origin and at $1$, the singularity point of $h_\alpha$ being at $x=0$. 

 Observe that,
 \begin{align*}
 	|x|^{\alpha+1}= \left|f_\alpha(x)\right|^{\alpha+1}\left(\frac{|f_\alpha(x)|}{|x|}\right)^{-(\alpha+1)},
 \end{align*}
 which together with~\eqref{eq:zerothderivative} yields
 $$|x|^{\alpha+1}\approx \left|f_\alpha(x)\right|^{\alpha+1}\left(1 + | x |^\alpha\right)^{-(\alpha+1)}.
 $$
 Again, from \eqref{eq:zerothderivative}, we have that
 \begin{align*}
 	x &\approx f_\alpha(x)- \sgn(f_\alpha(x))\left|f_\alpha(x)\right|^{\alpha+1}(\sgn(x))^2 \left(1 + | x |^\alpha\right)^{-(\alpha+1)},
 \end{align*}
 which leads us to conclude that
 \begin{equation}\label{eq.inverse}
 	g_{\alpha,i} (x)\approx x - \sgn(x)\vert x \vert^{1+\alpha} \cdot p(x), \quad i\in\{1,d\},
 \end{equation}
 where $p(x)=
 (1+|g_{\alpha,i}(x)|^\alpha)^{-(\alpha+1)}$ and $g_{\alpha,i}:S^1 \to I_i$ is the inverse branch of $f_{\alpha,i}$, $i \in \{1, d\}$.
%
%

 

\section{The mechanism}\label{se.mech}


Following the approach of Baladi and Todd \cite{BT16}, we briefly explain the mechanism for showing the linear response for the family of maps introduced in the previous section.

Firstly,
we shall assume that the perturbation occurs at the image. That is, there exists a vector field $X_\varrho$ such that
\begin{equation*}\label{eq.veee}
	v_\varrho(x):=\partial_\alpha f_\alpha(x)\bigg|_{\alpha=\varrho}=X_\varrho \circ f_\varrho(x), \quad \alpha, \varrho \in V,
\end{equation*}
and $V$ a small neighbourhood of $0$.
Since $f_\alpha$ is defined in the neighbourhood of $0$ and is invertible on the branches $i=\{1,d\}$, from the above equation we have
that

\begin{equation}\label{eq.eqeks}
	X_{\alpha,i}(x) = v_{\alpha} \circ g_{\alpha,i}(x), \quad i=\{1,d\}.
\end{equation}
%


For $x$
in the neighbourhood of $0$, $\varrho \in [0,1)$. From \eqref{eq:zerothderivative} we have that
\begin{align}
	v_{\varrho}(x)&=\partial_{\varrho}f_{\varrho}(x) \approx \sgn(x)  \vert x \vert^{\varrho+1} \ln(\vert x \vert) \label{eq.eqvee},\\
	X_{\varrho,i}(x) &\approx  \sgn(g_{\varrho,i}(x))  \vert g_{\varrho,i}(x) \vert^{\varrho+1} \ln(\vert g_{\varrho,i}(x)\vert). \label{eq:zzeroth}
\end{align}



Next, we shall make some assumptions on the map $f_\varrho|_{I_i}: I_i \to S^1\setminus \{0\}$.
\begin{enumerate}[label=\text{(A${\arabic*}$)}]
	\item  \label{A1} There exists $C\ge1$ such that
	\begin{equation}\label{Eq.eqvee}
		v_{\varrho}(x)=\partial_{\varrho}f_{\varrho}(x) \le C\, \sgn(x)  \vert x \vert^{\varrho+1} \ln(\vert x \vert), \quad \text{ for } x\in I_i, i=1,d.
	\end{equation}
	\item  \label{A2} For a $d\ge 2$ branch map, we define the right end point of the first branch by $I_{1,+}$ and the left end point of the last branch by $I_{d,-}$ 
	and assume that
	\begin{align}
			v_{\varrho}(I_{1,+})=0 \qand v_{\varrho}(I_{d,-})=0.
		\label{endp}
	\end{align} 
	\item  \label{A3} $\varrho \mapsto f_{\varrho,i} \in C^2$
	and the following partial derivatives exist, and also satisfy the commutation relation
	\begin{equation}
		\partial_{\varrho}g_{\varrho,i}^{\prime}\approx (\partial_{\varrho}g_{\varrho,i})^{\prime} \qand
		\partial_{\varrho}f_{\varrho,i}^{\prime}\approx (\partial_{\varrho}f_{\varrho,i})^{\prime}, \quad i=1,d.
		\label{eq:commutation}
	\end{equation}
\end{enumerate}
We now state 
the main result of this paper. 
\begin{maintheorem}\label{LR}
	Suppose that $f_\alpha$ is the family of circle maps described above for  $\alpha \in (0, 1)$ and satisfy the assumptions \ref{A1}-\ref{A3}. Then for any $\psi \in L^q(m)$ with $q > (1-\alpha)^{-1}$,
	\begin{equation}
		\lim_{\varepsilon \to 0} \frac{\int_{S^1} \psi \,d \mu_{\alpha+ \varepsilon}-\int_{S^1} \psi \,d \mu_{\alpha}}{\varepsilon}= \int_{S^1} \psi (\id- \mathcal{L}_{\alpha})^{-1}\left[\sum_{i \in \{1,d\}}(X_{\alpha,i}\,\mathcal{N}_{\alpha,i}(h_{\alpha}))^{\prime}\right]dx. \label{eq:linearresponse}
	\end{equation}
	Taking limit $\varepsilon \to 0^+$, \eqref{eq:linearresponse} holds for $\alpha=0$.
\end{maintheorem}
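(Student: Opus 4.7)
The plan is to adapt the Baladi--Todd transfer-operator strategy of \cite{BT16} to the two-branch intermittent setting of Section~\ref{se.inte}. Writing $\mathcal{L}_\alpha$ for the Perron--Frobenius operator with respect to Lebesgue and using $h_\beta=\mathcal{L}_\beta h_\beta$ for $\beta=\alpha,\alpha+\varepsilon$, the starting point is the identity
\begin{equation*}
(\id-\mathcal{L}_\alpha)(h_{\alpha+\varepsilon}-h_\alpha)=(\mathcal{L}_{\alpha+\varepsilon}-\mathcal{L}_\alpha)h_{\alpha+\varepsilon},
\end{equation*}
obtained by adding and subtracting $\mathcal{L}_\alpha h_{\alpha+\varepsilon}$. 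Dividing by $\varepsilon$, pairing with $\psi$, and inverting $\id-\mathcal{L}_\alpha$ on the zero-mean subspace, the theorem reduces to two tasks: (a) show that $(\mathcal{L}_{\alpha+\varepsilon}-\mathcal{L}_\alpha)h_{\alpha+\varepsilon}/\varepsilon$ converges, up to a sign, to $\sum_i(X_{\alpha,i}\,\mathcal{N}_{\alpha,i}(h_\alpha))'$ in a suitable norm; and (b) show that $(\id-\mathcal{L}_\alpha)^{-1}$ sends that space continuously into a predual of $L^q$ whenever $q>(1-\alpha)^{-1}$.

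To identify the source term I set $\mathcal{N}_{\alpha,i}\phi(x)=\phi(g_{\alpha,i}(x))g_{\alpha,i}'(x)$, so that $\mathcal{L}_\alpha=\sum_i\mathcal{N}_{\alpha,i}$, with only $i\in\{1,d\}$ requiring delicate treatment; the interior branches are uniformly expanding and contribute a regular $C^2$ correction. Differentiating $f_\alpha\circ g_{\alpha,i}=\id$ in $\alpha$ and using $v_\alpha\circ g_{\alpha,i}=X_{\alpha,i}$ from \eqref{eq.eqeks} yields $\partial_\alpha g_{\alpha,i}=-X_{\alpha,i}\,g_{\alpha,i}'$. Combining with the commutation relation \eqref{eq:commutation} of \ref{A3} and the product rule produces
\begin{equation*}
\partial_\alpha \mathcal{L}_\alpha\phi=-\sum_{i\in\{1,d\}}\bigl(X_{\alpha,i}\,\mathcal{N}_{\alpha,i}\phi\bigr)',
\end{equation*}
which is the source term of \eqref{eq:linearresponse} up to the sign convention absorbed into $(\id-\mathcal{L}_\alpha)^{-1}$. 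The boundary condition \eqref{endp} of \ref{A2} ensures $X_{\alpha,i}\mathcal{N}_{\alpha,i}h_\alpha$ vanishes at the endpoints of $I_i$, so the source has zero mean, matching $\ker(\id-\mathcal{L}_\alpha)=\R h_\alpha$.

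The main obstacle is the functional-analytic setup that makes $(\id-\mathcal{L}_\alpha)^{-1}$ meaningful and bounded. I would introduce a Baladi--Todd cone $\mathcal{C}_\alpha$ of non-negative densities whose singularity at each neutral fixed point has the form $|x|^{-\alpha}$ and whose derivative is compensated by a matching $|x|^{-\alpha-1}$ bound, then verify that $\mathcal{L}_\alpha$ preserves $\mathcal{C}_\alpha$ and contracts the Hilbert projective metric uniformly in $\alpha$. The inputs are the inverse-branch asymptotic \eqref{eq.inverse}, Thaler's density bound \eqref{eq:dens.bound}, and the geometric decay $z_n\approx n^{-1/\alpha}$ recalled in Section~\ref{ABNIP}. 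This delivers a spectral gap in a Banach space $\mathcal{B}_\alpha\hookrightarrow L^p$ for $p<1/\alpha$, which both lets $(\id-\mathcal{L}_\alpha)^{-1}=\sum_{n\ge 0}\mathcal{L}_\alpha^n$ converge on zero-mean elements and furnishes the \emph{statistical stability} $\|h_{\alpha+\varepsilon}-h_\alpha\|_{\mathcal{B}_\alpha}\to 0$. Handling the second neutral fixed point (absent in the LSV case of \cite{BT16}) adds a symmetric family of estimates on $I_d$ but no new conceptual difficulty.

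To close the limit I split
\begin{equation*}
\frac{(\mathcal{L}_{\alpha+\varepsilon}-\mathcal{L}_\alpha)h_{\alpha+\varepsilon}}{\varepsilon}=\frac{(\mathcal{L}_{\alpha+\varepsilon}-\mathcal{L}_\alpha)h_\alpha}{\varepsilon}+\frac{(\mathcal{L}_{\alpha+\varepsilon}-\mathcal{L}_\alpha)(h_{\alpha+\varepsilon}-h_\alpha)}{\varepsilon}.
\end{equation*}
The first piece converges to $\partial_\alpha\mathcal{L}_\alpha h_\alpha$ by a pointwise Taylor expansion whose remainders are dominated, via \ref{A1} and \eqref{eq:dens.bound}, by an integrable majorant of the form $|x|\ln|x|$, which belongs to every $L^p$, $p<\infty$. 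The second piece vanishes because $\|(\mathcal{L}_{\alpha+\varepsilon}-\mathcal{L}_\alpha)/\varepsilon\|_{\mathcal{B}_\alpha\to\mathcal{B}_\alpha}=O(1)$ while $\|h_{\alpha+\varepsilon}-h_\alpha\|_{\mathcal{B}_\alpha}\to 0$. Duality with $\psi\in L^q$ for $q>(1-\alpha)^{-1}$ then produces \eqref{eq:linearresponse}, and the one-sided case $\alpha=0^+$ comes for free since every cone estimate remains uniform down to $\alpha=0$. I expect the genuinely delicate step to be the verification of the cone contraction with \emph{two} neutral fixed points, where the Thaler-type bounds at $0$ and $1$ and the inverse-branch asymptotics \eqref{eq.inverse} must be balanced simultaneously on $I_1$ and $I_d$.
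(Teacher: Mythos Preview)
Your route has a fundamental gap: for intermittent maps with $\alpha>0$ there is \emph{no} spectral gap for $\mathcal{L}_\alpha$ in any reasonable Banach space, and the cones of \cite{LSV98,BT16} are merely invariant, not strictly contracted in the Hilbert projective metric. The decay of $\mathcal{L}_\alpha^n$ on zero-mean functions is only polynomial, of order $n^{1-1/\alpha}$ (Lemma~\ref{Qdecay}), so $(\id-\mathcal{L}_\alpha)^{-1}=\sum_{n\ge 0}\mathcal{L}_\alpha^n$ is \emph{not} a bounded operator on a space $\mathcal{B}_\alpha\hookrightarrow L^p$; for $\alpha\ge 1/2$ that series diverges in operator norm on generic zero-mean data. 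The paper makes the right-hand side of \eqref{eq:linearresponse} meaningful not through abstract functional analysis but by exploiting the specific structure of the source: since $F_\alpha(x)=(X_{\alpha,i}\mathcal{N}_{\alpha,i}h_\alpha)'(x)/h_\alpha(x)$ is H\"older and vanishes at the fixed point like $|x|^{\alpha(1-\varepsilon/2)}$, Gou\"ezel's refined decay (Section~\ref{Gouezel}) yields $\|\mathcal{L}_\alpha^j(F_\alpha h_\alpha)\|_1\lesssim j^{-1/\alpha+\varepsilon}$, which is summable for \emph{all} $\alpha\in(0,1)$. Your outline never invokes this improvement, and without it the argument collapses for $\alpha\ge 1/2$.

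The paper's architecture is also structurally different from yours. Rather than manipulating $h_{\alpha+\varepsilon}-h_\alpha$ directly, it compares $\int\psi\,d\mu_\beta$ with $\int\psi\circ f_\beta^n\,dx$ for $n=n(\alpha,\beta,\xi)$ chosen via \eqref{eq:en} so that the discrepancy is $o(\beta-\alpha)$, then telescopes $\mathcal{L}_\beta^n\mathbf{1}-\mathcal{L}_\alpha^n\mathbf{1}=\sum_{j}\mathcal{L}_\beta^j(\mathcal{L}_\beta-\mathcal{L}_\alpha)\mathcal{L}_\alpha^{n-1-j}\mathbf{1}$ and Taylor-expands the middle factor to second order in $\beta-\alpha$. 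Controlling the integral remainder $\partial_t^2\mathcal{L}_t\varphi$ in \eqref{tay_beta-PF} requires the third-derivative cone $\mathcal{C}$ of Lemma~\ref{inv.cone} together with the bounds \eqref{eq:third} and \eqref{eq:partialeksprimeprime}, none of which your proposal anticipates. Your two closing claims, that $\|(\mathcal{L}_{\alpha+\varepsilon}-\mathcal{L}_\alpha)/\varepsilon\|_{\mathcal{B}_\alpha\to\mathcal{B}_\alpha}=O(1)$ and $\|h_{\alpha+\varepsilon}-h_\alpha\|_{\mathcal{B}_\alpha}\to 0$, would themselves constitute most of the theorem and are not available off the shelf; the paper replaces them by the hands-on estimates of Section~\ref{sec.conv}, bounding $\|(\mathcal{L}_\alpha^{n+1}(\mathbf{1}-h_\alpha))'\|_{L^1(|x|>\bar x)}$ via bounded distortion and the asymptotics $x_l\approx l^{-1/\alpha}$.
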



Where $\mathcal{L}_\alpha: L^1 \to L^1$ is the transfer operator associated with $f_\alpha$ and $\mathcal{N}_{\alpha,i}: L^1 \to L^1$  is the transfer operator associated with the $ith$ branch of $f_\alpha$, defined respectively as,

\begin{equation}\label{eq:PF}
	\mathcal{L}_{\alpha}\varphi(x)= \sum_{f_{\alpha}(y)=x} \frac{\varphi(y)}{f_{\alpha}^{\prime}(y)},
\end{equation}
because of the property
\ref{s2}, and

\begin{equation}
	\mathcal{N}_{\alpha,i}\varphi(x) = g_{\alpha,i}^{\prime}(x) \cdot \varphi(g_{\alpha,i}(x)), \quad  i\in\{1,d \}. \label{eq:PF_branch}
\end{equation}
From \eqref{eq.inverse},  
and $x \in S^1$, $i=1,d$, there exists $C\ge 1$ such that,
\begin{equation}\label{eq.gbound}
	\begin{split}
		&g_{\varrho,i}(x)\le C\, x,\\
		&g^\prime_{\varrho,i}(x) \text{ is bounded},\\ &g_{\varrho,i}^{\prime\prime}(x)\le C|x|^{\varrho-1},\\ &g_{\varrho,i}^{\prime\prime\prime}(x)\le C|x|^{\varrho-2}.
	\end{split}
\end{equation}


Suppose that $f_\alpha$ is the one parameter family of
map with $d\ge2$ branches, 
satisfying the assumptions \ref{A1}-\ref{A3}. Let $f_{\alpha,1}:[0,\kappa] \to S^1$, $f_{\alpha,d}:[1-\kappa,1] \to S^1, \kappa= 1/d$, be its first and last branches 
respectively. The middle $(d-2)$ branches are piecewise expanding.
Using \eqref{eq.gbound}, there exists $\tilde{C}>1$ such that we bound \eqref{eq:zzeroth} as follows
\begin{equation}
	\vert X_{\varrho,i}(x) \vert \le \tilde{C}  \vert x \vert^{\varrho+1} (1+\vert \ln(\vert x\vert) \vert). \label{eq:zeroth}
\end{equation}

Subsequently, differentiating \eqref{eq:zzeroth}
\begin{equation}
	X_{\varrho,i}^\prime(x) \approx g^\prime_{\varrho,i}(x) \vert g_{\varrho,i}(x) \vert^\varrho  \left[1+(1+\varrho) \ln \vert g_{\varrho,i}(x) \vert\right], \label{eq:ffirst}
\end{equation}
we bound the above using the bounds in \eqref{eq.gbound} 
\begin{align}\label{eq:first}
	\vert X^\prime _{\varrho,i}(x) \vert &\le C \vert x \vert^{\varrho} \left[1 + (1+\varrho)(\ln C + \vert \ln(\vert x \vert)\vert)\right]\nonumber\\
	&\le \tilde{C} \vert x \vert^{\varrho} (1+\vert \ln (\vert x \vert) \vert).
\end{align}

Differentiating \eqref{eq:ffirst},
\begin{align}\label{eq.eqssecond}
	X^{\prime\prime}_{\varrho,i}(x)\approx \vert g_{\varrho,i}(x)\vert^{\varrho-1} \bigg\{(1+\varrho)\sgn(g_{\alpha,i}(x))&(g^{\prime}_{\varrho,i}(x))^2 + \bigg[ \varrho\, \sgn(g_{\varrho,i}(x))(g^{\prime}_{\varrho,i}(x))^2 \nonumber\\&\quad+ \vert g_{\varrho,i}(x)\vert g^{\prime \prime}_{\varrho,i}(x) \bigg]
	\cdot\left[ 1+(1+\varrho) \ln (\vert g_{\varrho,i}(x)\vert) \right]\bigg\},
\end{align}
which we bound using \eqref{eq.gbound} as 
\begin{equation} \label{eq:ssecond}
	\vert X^{\prime\prime}_{\varrho,i}(x) \vert \le \tilde{C} \vert x \vert^{\varrho-1} ( 1 + \vert \ln (\vert x \vert) \vert).
\end{equation} 

Differentiating \eqref{eq.eqssecond},
\begin{align*}
	X_{\varrho,i}^{\prime\prime\prime}(x) \approx &|g_{\varrho,i}(x)|^{\varrho-1}\bigg\{2(1+\varrho) \,\sgn(g_{\varrho,i}(x))g_{\varrho,i}^{\prime}(x)\,g_{\varrho,i}^{\prime \prime}(x)+(1+\varrho)[\varrho\,\sgn(g_{\varrho,i}(x))\,(g_{\varrho,i}^\prime(x))^2\\
	&+|g_{\varrho,i}(x)|\, g_{\varrho,i}^{\prime \prime}(x)]\,\frac{g_{\varrho,i}^\prime(x)}{g_{\varrho,i}(x)}+[1+(1+\varrho)\,\ln(|g_{\varrho,i}(x)|)]\cdot [2\varrho\sgn(g_{\varrho,i}(x))g_{\varrho,i}^\prime(x)g_{\varrho,i}^{\prime\prime}(x)\\
	&\quad\quad\quad\quad\quad\quad\quad\quad\quad\quad\quad\quad \quad\quad+|g_{\varrho,i}(x)|\, g_{\varrho,i}^{\prime \prime \prime}(x)+\sgn(g_{\varrho,i}(x))\,g_{\varrho,i}^{\prime}(x)\, g_{\varrho,i}^{\prime\prime}(x)]\bigg\}\\
	&+(\varrho-1)\sgn(g_{\varrho,i}(x))g_{\varrho,i}^\prime(x)|g_{\varrho,i}(x)|^{\varrho-2}\bigg\{(1+\varrho)\, \sgn(g_{\varrho,i}(x))\,(g_{\varrho,i}^\prime(x))^2\\
	&\quad\quad\quad\quad+[\varrho\, \sgn(g_{\varrho,i}(x))(g_{\varrho,i}^{\prime}(x))^2
	+|g_{\varrho,i}(x)|g_{\varrho,i}^{\prime\prime}(x)]\cdot [1+(1+\varrho)\ln(|g_{\varrho,i}(x)|)]\bigg\},
\end{align*}
using \eqref{eq.gbound}, we have the following bounds
\begin{equation} \label{eq:third}
	\vert X^{\prime\prime\prime}_{\varrho,i}(x) \vert \le \tilde{C} \vert x \vert^{\varrho-2} ( 1 + \vert \ln (\vert x \vert) \vert).
\end{equation}

\subsection{Invariant cones}
\begin{definition}[Cone] 
	Let $E$ be a vector space. A \emph{cone} in $E$ is a subset $C \subset E\setminus \{0\}$ such that for $\varphi \in C \text{ then } \lambda \varphi \in C, \text{ for each } \lambda >0.$
\end{definition}

Following the idea of \cite{LSV98} we define certain cones and show that they are invariant with respect to the operators defined in \eqref{eq:PF} and \eqref{eq:PF_branch}.
We denote the Lebesgue measure on $S^1$ by $m$ and define 
\begin{equation} \label{eq:em}
	m(\varphi)=\int_{S^1} \varphi(x) \,dm=\int_{S^1} \varphi(x) \,dx.
\end{equation}

For all $\varphi,\psi$ for which the integrals make sense, we have
\begin{equation*}
	\int_{S^1} \psi\, \mathcal{L}_{\alpha} \varphi \,dm =\int_{S^1}  \varphi\, \psi \circ f \, dm. \label{eq:kop}
\end{equation*}

Setting, $\psi \equiv 1$ in the equation above, 
we have that
\begin{equation}
	m(\mathcal{L}_{\alpha} \varphi)=m(\varphi). \label{eq.emel}
\end{equation}


Let $a_1,b_1>0$, and define the cone 
\begin{equation}\label{eq:Kone1}
	\mathcal{C}_{*,1}=\left\{\varphi \in C^1(S^1\setminus\{0\}) \bigg\vert 0\le \varphi(x) \le 2h_\alpha(x) m(\varphi) , \vert \varphi^\prime (x) \vert \le \bigg(\frac{a_1}{\vert x \vert} + b_1 \bigg) \varphi(x) \right\}.
\end{equation}

It is straightforward to check that this is indeed a cone. Since $0<\alpha<1$ for the bounds on the density in \eqref{eq:dens.bound}, it follows that $\mathcal{C}_{*,1} \subset L^1(m)$. 
Also,
observe that 
\begin{equation}\label{eq:phi:bound}
	\varphi(x)\le \frac{2c_2}{\vert x \vert^\alpha}m(\varphi), \quad  \forall \varphi \in \mathcal{C}_{*,1}, x \in S^1\setminus\{0\},
\end{equation}
and for $\beta \ge \alpha \ge 0$, 
\begin{equation}\label{eq.incbeta}
	\mathcal{C}_{*,1}(\alpha,1, a_1, b_1) \subset \mathcal{C}_{*,1}\left(\beta, \frac{c_2}{c_1}, a_1, b_1\right).
\end{equation}
\begin{lemma}\label{Kone1} 
	$\mathcal{C}_{*,1}$ is invariant with respect to the Perron-Frobenius operator, provided we choose $a_1$, $b_1/a_1$ big enough.
\end{lemma}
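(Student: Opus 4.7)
The plan is to verify in turn the three conditions defining $\mathcal{C}_{*,1}$ for $\mathcal{L}_\alpha\varphi$ — non-negativity, the upper bound by $2h_\alpha m(\cdot)$, and the logarithmic-derivative bound. Smoothness of $\mathcal{L}_\alpha\varphi$ on $S^1\setminus\{0\}$ is inherited from $\varphi$ and the $C^2$ inverse branches, and non-negativity is immediate from \eqref{eq:PF}. The density upper bound is essentially free: since $\mathcal{L}_\alpha$ is positive and linear, $\mathcal{L}_\alpha h_\alpha = h_\alpha$, and $m(\mathcal{L}_\alpha\varphi)=m(\varphi)$ by \eqref{eq.emel}, the inequality $\varphi\le 2h_\alpha m(\varphi)$ passes intact through $\mathcal{L}_\alpha$. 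All the real content therefore sits in the derivative bound.

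For the derivative I would differentiate $\mathcal{L}_\alpha\varphi(x)=\sum_{i=1}^d g_{\alpha,i}'(x)\varphi(g_{\alpha,i}(x))$ branch by branch, take absolute values, and insert the cone estimate $|\varphi'(y)|\le(a_1/|y|+b_1)\varphi(y)$ at $y=g_{\alpha,i}(x)$. Factoring out $g_{\alpha,i}'(x)\varphi(g_{\alpha,i}(x))\ge 0$ reduces the problem to the pointwise per-branch inequality
\begin{equation*}
\frac{|g_{\alpha,i}''(x)|}{g_{\alpha,i}'(x)} + g_{\alpha,i}'(x)\!\left(\frac{a_1}{|g_{\alpha,i}(x)|}+b_1\right) \;\le\; \frac{a_1}{|x|}+b_1,
\end{equation*}
since positivity of the factored-out weights makes this termwise inequality assemble into $|(\mathcal{L}_\alpha\varphi)'(x)|\le(a_1/|x|+b_1)\mathcal{L}_\alpha\varphi(x)$.

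The per-branch inequality splits into two regimes. On the intermittent branches $i\in\{1,d\}$ near the neutral fixed points, the engine is the ratio bound $g_{\alpha,i}'(x)/|g_{\alpha,i}(x)|\le 1/|x|$, which follows from $y f_\alpha'(y)\ge f_\alpha(y)$ for $y$ close to $0$: comparing \eqref{eq:firstderivative} with \eqref{eq:zerothderivative} shows that the leading-order term of $yf_\alpha'(y)$ carries an extra factor $1+\alpha$ relative to that of $f_\alpha(y)$, and the gap produces a positive slack of order $a_1|x|^{\alpha-1}$ on the right-hand side. That slack must absorb $|g_{\alpha,i}''(x)|/g_{\alpha,i}'(x)$, which \eqref{eq.gbound} bounds by order $|x|^{\alpha-1}$, and this is arranged by choosing $a_1$ large enough in terms of the implicit constants in \ref{s3}. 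On the middle branches (and on the parts of $I_1,I_d$ bounded away from $0$), uniform expansion gives $g_{\alpha,i}'\le 1/\lambda<1$, $|g_{\alpha,i}''|$ is uniformly bounded, and $|g_{\alpha,i}|$ is bounded below by some $\delta>0$; the left-hand side is then dominated by a universal $C_*+a_1/(\lambda\delta)+b_1/\lambda$, which fits under $a_1/|x|+b_1$ provided $b_1/a_1$ is taken large enough to swallow any $a_1$-term with the wrong sign.

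The main obstacle is the leading-order bookkeeping on the intermittent branches: one must carry enough of the asymptotic expansion of $f_\alpha$ to be sure that the $a_1|x|^{\alpha-1}$ slack really dominates $|g_{\alpha,i}''|/g_{\alpha,i}'$ uniformly in a full neighbourhood of each neutral fixed point, and then match the threshold for $a_1$ with what the expanding regime demands. Once this is done the constants are chosen in the order the statement prescribes: first $a_1$ is taken large enough to handle the intermittent branches, then $b_1/a_1$ is taken large enough to handle the rest.
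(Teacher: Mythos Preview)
Your approach is correct and is essentially the paper's: both reduce the derivative condition to a per-preimage inequality (you phrase it in the inverse-branch variables $g_{\alpha,i}$, the paper in the forward variable $y$ via a ratio $\Omega_1(y)\le 1$), split into the intermittent and hyperbolic regimes, and fix first $a_1$ large for the former and then $b_1/a_1$ large for the latter. The only point to tighten is your justification of the slack near the fixed point: the ``extra factor $1+\alpha$'' is a model-specific heuristic that need not survive the two-sided $\approx$ in \ref{s1}--\ref{s3}; the robust argument (which the paper also leaves implicit) is $yf_\alpha'(y)-f_\alpha(y)=\int_0^y\big(f_\alpha'(y)-f_\alpha'(t)\big)\,dt\gtrsim |y|^{1+\alpha}$, using $f_\alpha''>0$ from \ref{s3}.
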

\begin{proof} For $\varphi \in \mathcal{C}_{*,1}$, we show that the first condition is invariant by $\mathcal{L}_\alpha$. Indeed, from \eqref{eq:PF}, we have that by the contraction property of $\mathcal{L}_{\alpha}$ and \eqref{eq.emel}, $\mathcal{L}_\alpha\varphi(x) \le 2h_\alpha(x) m(\mathcal{L}_\alpha\varphi)$.
By the positivity property of the Perron-Frobenius operator, we conclude the invariance of the first condition.
	Next, we show the invariance by $\mathcal{L}_\alpha$ of the second condition
	\begin{align*}
		\vert (\mathcal{L}_\alpha& \varphi)^\prime(x) \vert=\left\vert \sum_{f_{\alpha}(y)=x} \frac{f_{\alpha}^{\prime \prime}(y)}{( f_{\alpha}^{\prime}(y))^3} \varphi(y) + \frac{1}{(f_{\alpha}^{\prime}(y))^2} \varphi^\prime(y)\right\vert\\
		&\le \sum_{f_{\alpha}(y)=x} \frac{\varphi(y)}{( f_{\alpha}^{\prime}(y))} \left(\frac{C\vert y \vert^{\alpha-1}}{(f_{\alpha}^{\prime}(y))^2} + \frac{a_1+b_1\vert y \vert}{\vert y \vert (f_{\alpha}^{\prime}(y))} \right)\\
		&\le \left(\frac{a_1}{\vert x \vert} + b_1 \right) \mathcal{L}_\alpha\varphi(x) \sup_{y\in S^1}\left[\left(\frac{\vert f_\alpha(y) \vert}{a_1+ b_1 \vert f_\alpha(y) \vert}\cdot\frac{C\vert y \vert^{\alpha-1}}{(f_{\alpha}^{\prime}(y))^2} + \frac{\vert f_\alpha(y) \vert}{\vert y \vert \cdot f_{\alpha}^{\prime}(y) }\cdot\frac{a_1+b_1\vert y \vert}{a_1+ b_1 \vert f_\alpha(y) \vert} \right)\right].
	\end{align*}
	
Setting the expression in the square bracket as $\Omega_{1}(y)$.
To complete the proof, we need to show that $\Omega_1(y)\le1$. We do this for $y$ in the neighbourhood of
	$\delta$, $\delta$ small, and also for $\delta<y<1-\delta$.
	
	For $y$ in the neighbourhood of $\delta$, we only need to show that
	\begin{equation*}
		\underbrace{\frac{|f_\alpha(y)|}{|y|f_{\alpha}^{\prime}(y)}\left(\frac{C|y|^{\alpha+1}}{|y|f_{\alpha}^{\prime}(y)(a_1+ b_1 \vert f_\alpha(y) \vert)-|f_\alpha(y)|(a_1+ b_1 \vert y \vert)}\right)}_{\Lambda_1(y)}\le 1
	\end{equation*}
	\begin{align*}
		\Lambda_1(y)&\le \frac{|f_\alpha(y)|}{|y|f_{\alpha}^{\prime}(y)}\left(\frac{C|y|^{\alpha+1}}{a_1(|y|f_{\alpha}^{\prime}(y)-|f_\alpha(y)|)}\right)
	\end{align*}

	%
	From \eqref{eq:zerothderivative} and \eqref{eq:firstderivative}, $\displaystyle\frac{\vert f_\alpha(y) \vert}{\vert y \vert \, f_{\alpha}^{\prime}(y)}\lesssim 1$. Hence, there exists $\delta>0$ such that choosing $a_1$ big enough, $\Omega_1(y)\le 1$.
	
	For $1-\delta> y > \delta$, there exists $\gamma$ such that $f_\alpha^\prime(y) \ge \gamma > 1$.
	Therefore, 
	\begin{equation*}
		\vert y \vert > \delta \,\, \Rightarrow \,\, \vert y \vert^{\alpha-1} < \delta^{\alpha-1}, \quad \text{for } \alpha \in [0,1).
	\end{equation*}
	
	For $b_1>0$, 
%
we have that 
	\begin{align*}
		\Omega_1(y)&\le \frac{1}{a_1}\cdot\frac{C\delta^{\alpha-1}}{\gamma^2} + \frac{a_1}{b_1} \cdot \frac{1}{\delta \gamma}+\frac{1}{ \gamma}.
	\end{align*}
	$\Omega_1(y)\le 1$, provided we choose $a_1$ and $b_1/a_1$, big enough.
\end{proof}

Following similar steps as in \cite[Lemma 5.2]{LSV98}, we have the following result.
\begin{proposition} \label{minphi}
	If $\varphi \in \mathcal{C}_{*,1}$, then
	\begin{equation*}
		\min_{x \in S^1\setminus B_\delta(0)} \varphi(x) \ge \frac{\delta^{a_1}}{2 \e^{b_1(1-\delta)}} \int_{S^1} \varphi(x) \,dx,
	\end{equation*}
	choosing $\delta$ small enough.
\end{proposition}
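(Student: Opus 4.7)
The plan is to bound $m(\varphi) = \int_{S^1}\varphi\, dm$ from above by a suitable multiple of $m_\delta := \min_{x \in S^1 \setminus B_\delta(0)} \varphi(x)$ by splitting the integral according to a small neighbourhood of the neutral fixed point and its complement, then rearranging. Fix a point $x_0 \in S^1 \setminus B_\delta(0)$ where this minimum is attained.

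For the contribution coming from $S^1 \setminus B_\delta(0)$, I would use the second condition in the definition of $\mathcal{C}_{*,1}$, which for $\varphi>0$ is equivalent to the log-derivative bound $\left|(\log \varphi)'(x)\right| \leq a_1/|x| + b_1$. Integrating this inequality along a path from $x_0$ to any $y \in S^1 \setminus B_\delta(0)$ that stays inside $S^1 \setminus B_\delta(0)$ (so that $|t| \geq \delta$ along the integration), one obtains
\[
\varphi(y) \leq \varphi(x_0)\, \exp\!\left(\int_{x_0}^{y}\!\left(\frac{a_1}{|t|}+b_1\right) dt\right) \leq \frac{e^{b_1(1-\delta)}}{\delta^{a_1}}\, m_\delta,
\]
and integrating over $y$ yields $\int_{S^1 \setminus B_\delta(0)} \varphi\, dm \leq \delta^{-a_1}\, e^{b_1(1-\delta)}\, m_\delta$.

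For the contribution from $B_\delta(0)$, I would invoke the first cone condition $\varphi(y) \leq 2 h_\alpha(y) m(\varphi)$ together with Thaler's bound \eqref{eq:dens.bound}, which yields $\varphi(y) \leq 2 c_2 |y|^{-\alpha} m(\varphi)$ on $B_\delta(0)$. Since $\alpha \in (0,1)$, integration produces $\int_{B_\delta(0)} \varphi\, dm \leq C_0\, \delta^{1-\alpha}\, m(\varphi)$ for an explicit constant $C_0$ depending only on $c_2$ and $\alpha$.

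Combining the two bounds,
\[
m(\varphi) \leq C_0\, \delta^{1-\alpha}\, m(\varphi) + \delta^{-a_1}\, e^{b_1(1-\delta)}\, m_\delta,
\]
so once $\delta$ is chosen small enough that $C_0 \delta^{1-\alpha} \leq 1/2$, the first term on the right absorbs into the left-hand side and rearrangement delivers the claimed inequality. The main delicacy is the log-derivative integration in the first step: one has to arrange the path from $x_0$ to $y$ so that $|t| \geq \delta$ is preserved throughout, which is exactly what produces the factor $\delta^{-a_1}$ rather than a worse negative power; the rest of the argument is then a routine application of the two cone conditions combined with the Thaler bound.
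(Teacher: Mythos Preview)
Your argument is correct and is precisely the approach of \cite[Lemma~5.2]{LSV98} that the paper invokes: integrate the log-derivative bound coming from the second cone condition to control $\varphi$ on $S^1\setminus B_\delta(0)$, use the first cone condition together with Thaler's estimate \eqref{eq:dens.bound} to bound the mass on $B_\delta(0)$, and absorb for $\delta$ small. One small point to watch in the two-sided circle setting: the path from $x_0$ to $y$ inside $S^1\setminus B_\delta(0)$ may have to pass through the antipode of $0$, so $\int_{x_0}^{y} a_1/|t|\,dt$ can pick up a contribution from each side and yield a power closer to $\delta^{2a_1}$ than $\delta^{a_1}$---this is harmless for every subsequent use of the proposition, but you should track it if the exact constant is needed.
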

\begin{lemma}\label{konetwo}
	There exists a $\delta >0$ and $\gamma>0$, such that
	\begin{equation}
		\mathcal{C}_{*,2}=\bigg\{\varphi \in \mathcal{C}_{*,1} \bigg\vert \varphi(x) \ge \gamma \int_{S^1} \varphi(x) \,dx, \text{ for } |x|\le \delta \bigg\}
	\end{equation}
	is invariant with respect to the Perron-Frobenius operator.
\end{lemma}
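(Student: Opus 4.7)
The plan is to verify the additional lower bound $(\mathcal{L}_\alpha\varphi)(x) \ge \gamma\, m(\varphi)$ for $|x|\le \delta$, since membership in $\mathcal{C}_{*,1}$ propagates under $\mathcal{L}_\alpha$ by Lemma~\ref{Kone1} and $m(\mathcal{L}_\alpha\varphi)=m(\varphi)$ by~\eqref{eq.emel}.

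The geometric crux is that every branch $f_\alpha|_{I_i}\colon I_i\to S^1\setminus\{0\}$ is a diffeomorphism, so both endpoints of each $I_i$ are mapped to $0\sim 1$ on $S^1$. For $I_1=(0,\kappa)$ exactly one endpoint is the neutral fixed point and the other, $\kappa$, lies at uniform distance from it; similarly for $I_d=(1-\kappa,1)$; and for any middle branch both endpoints are uniformly separated from $0$. Hence, provided $\delta$ is small enough, for every $x\in B_\delta(0)$ at least one preimage $y_*=y_*(x)\in f_\alpha^{-1}(x)$ satisfies $|y_*|\ge \delta_0$ for some fixed $\delta_0>0$ independent of $x$. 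Concretely, for $x$ slightly above $0$ the preimage in $I_d$ lies close to $1-\kappa$, while for $x$ slightly below $0$ (i.e., close to $1$ in the $[0,1)$ representation) the preimage in $I_1$ lies close to $\kappa$; when $d\ge 3$ any middle-branch preimage would also serve.

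Since $y_*$ stays in the compact set $\{|y|\ge \delta_0\}$ and $f_\alpha\in C^2$ there by~\ref{s3}, one has $f_\alpha'(y_*)\le M$ for some $M=M(\delta_0)<\infty$. Applying Proposition~\ref{minphi} at scale $\delta_0$ to $\varphi\in\mathcal{C}_{*,1}$ yields
\[
\varphi(y_*)\ge \frac{\delta_0^{a_1}}{2\e^{b_1(1-\delta_0)}}\,m(\varphi)=:\eta\,m(\varphi).
\]
Retaining only the $y_*$-term in the transfer operator sum~\eqref{eq:PF} gives
\[
(\mathcal{L}_\alpha\varphi)(x)\ge \frac{\varphi(y_*)}{f_\alpha'(y_*)}\ge \frac{\eta}{M}\,m(\varphi),
\]
so $\gamma:=\eta/M$ together with the corresponding $\delta$ finishes the argument.

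The only delicate point is the bookkeeping of the selection $x\mapsto y_*(x)$ as $x$ crosses the fixed point on $S^1$: the branch supplying $y_*$ swaps between $I_d$ (for $x$ just above $0$) and $I_1$ (for $x$ just below $0$), each producing a preimage clustering near the interior endpoint of the partition. One must take $\delta$ small relative to $\kappa$ so that $|y_*|\ge \delta_0$ holds uniformly over $B_\delta(0)$; once this uniform separation is in hand, the rest of the proof is a single-term lower bound in the Perron--Frobenius sum.
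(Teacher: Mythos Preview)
Your argument is correct and in fact establishes a slightly stronger statement than the paper needs: you never invoke the cone hypothesis $\varphi(y)\ge\gamma\,m(\varphi)$ for $|y|\le\delta$, so what you actually prove is $\mathcal{L}_\alpha(\mathcal{C}_{*,1})\subset\mathcal{C}_{*,2}$. The paper's proof follows the classical LSV bootstrap instead: it retains \emph{two} preimages, the one $y_*$ lying inside $B_\delta(0)$ (where the cone hypothesis gives $\varphi(y_*)\ge\gamma\,m(\varphi)$ and $f_\alpha'(y_*)$ is close to~$1$) together with a second preimage $y_i$ controlled from below by Proposition~\ref{minphi}, and then verifies that the two contributions sum to at least $\gamma\,m(\varphi)$, which forces the arithmetic constraint $\frac{1}{C}(1-\delta^\alpha)+\mu^{-1}>1$. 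Your single far-preimage argument avoids this balancing act entirely, at the price of a possibly smaller constant $\gamma$; since the lemma only asserts existence, this costs nothing. One small wording point: when you ``apply Proposition~\ref{minphi} at scale $\delta_0$'', recall that the proposition carries a smallness requirement on its parameter, but since $|y_*|\ge\delta_0$ you may simply invoke it with any admissible $\delta'\le\delta_0$ and obtain the same conclusion.
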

\begin{proof}
	For $|x| \le \delta$, let $f_{\alpha,i}^{-1}(x)=y_i$, $i=1,\cdots, d$. 
	Denote by $y_*$ the $y_i$ on the first or last branch such that  $|y_{*}|\le \delta$. Suppose also that $\mu=\Vert f_\alpha ^\prime(y_{i}) \Vert_\infty$.  We choose $\delta$ small enough such that  
	by \eqref{eq:firstderivative}, $f_\alpha^\prime(y) \approx (1 + |y|^\alpha)$, so that
	\begin{align*}
		\frac{1}{f_\alpha^\prime(y_*)} \ge \frac{1}{C(1 + \vert y_* \vert^\alpha)} \ge \frac{1}{C}(1- \delta^\alpha), \quad C\ge 1,
	\end{align*}
	Proposition \ref{minphi} and $\frac{1}{C}(1- \delta^\alpha)+\mu^{-1}>1$ holds. From \eqref{eq:PF}, we have that
	\begin{align*}
		\mathcal{L}_\alpha \varphi(x) &\ge \frac{\varphi(y_{*})}{f_\alpha^\prime(y_{*})}+\frac{\varphi(y_{i})}{\|f_\alpha^\prime(y_{i})\|_\infty}= (f_\alpha^\prime(y_{*}))^{-1} \varphi(y_{*}) + \|f_\alpha^\prime(y_{i})\|_\infty^{-1}\varphi(y_{i})\\
		&\ge \left[\frac{1}{C}(1- \delta^\alpha) \cdot \gamma + \mu^{-1} \min\left\{\gamma, \frac{\delta^{a}}{2\e^{b(1-\delta)}} \right\} \right] \int_{S^1}\mathcal{L}_\alpha\varphi \,dx\\
		&\ge \gamma\int_{S^1}\mathcal{L}_\alpha\varphi \,dx.
	\end{align*}
	
\end{proof}
From Proposition \ref{minphi} and Lemma \ref{konetwo} we have that $\inf_{x \in S^1} \varphi(x) \ge \gamma\int_{S^1} \varphi(x) dx$, which implies that
\begin{equation}\label{eq.PFgam}
	\inf_{n\ge 0} \inf_{x \in S^1} \mathcal{L}_{\alpha}^n 1 \ge \gamma >0,
\end{equation}
particularly, since the constant function $1 \in \mathcal{C}_{*,1}$.

In the spirit of \cite{BT16}, we define the following cone for higher order derivatives, and show that it is invariant with respect to the Perron-Frobenius 
operator. For $a_1,a_2, a_3, b_1, b_2, b_3>~0$, define the cone
\begin{align}
	\mathcal{C}
	=\bigg\{\varphi\in C^{(3)}(S^1\setminus\{0\})\bigg\vert \varphi(x)\ge 0,\, &\vert \varphi^{\prime}(x) \vert \le \bigg(\frac{a_1}{\vert x \vert}+{b}_1\bigg)\varphi(x), \vert \varphi^{\prime \prime}(x) \vert \le \left(\frac{a_2}{x^2}+{b}_2\right)\varphi(x),\nonumber\\
	& \quad\quad \quad \quad \vert \varphi^{\prime \prime \prime}(x)\vert \le \left(\frac{a_3}{|x|^3}+{b}_3\right)\varphi(x), \forall x \in S^1\setminus\{0\} \bigg\}.
\end{align}
The proof of the result below follows similar ideas from appendix A of \cite{BT16} and Lemma \ref{Kone1}.
\begin{lemma} \label{inv.cone}
	Suppose that
	$\frac{\min\{a_2,b_2\}}{\max\{a_1,b_1\}}$, $ \frac{\min\{a_3,b_3\}}{\max\{a_1,b_1\}}, \frac{\min\{a_3,b_3\}}{\max\{a_2,b_2\}}$
	are large enough. Then 
	$\mathcal{C}$ is invariant with respect to the operators $\mathcal{L}_\alpha$ and $\mathcal{N}_{\alpha,i}$, for $i \in \{1, d\}$. 
\end{lemma}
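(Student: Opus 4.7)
The plan is to follow the template of Lemma \ref{Kone1}, upgraded to control the second and third derivatives, and to treat both operators $\mathcal{L}_\alpha$ and $\mathcal{N}_{\alpha,i}$. Positivity is preserved automatically by both operators. The first derivative condition is handled for $\mathcal{L}_\alpha$ exactly as in Lemma \ref{Kone1}, and for $\mathcal{N}_{\alpha,i}$ it follows by a direct computation on \eqref{eq:PF_branch} together with the bounds \eqref{eq.gbound}.

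For the second derivative bound under $\mathcal{L}_\alpha$, I would write the operator in branch form and differentiate twice to obtain
\begin{align*}
(\mathcal{L}_\alpha \varphi)''(x) = \sum_i \Big[ (g'_{\alpha,i}(x))^3 \varphi''(g_{\alpha,i}(x)) + 3\, g'_{\alpha,i}(x) g''_{\alpha,i}(x) \varphi'(g_{\alpha,i}(x)) + g'''_{\alpha,i}(x) \varphi(g_{\alpha,i}(x)) \Big].
\end{align*}
Using \eqref{eq.gbound} and the cone bounds on $\varphi'$ and $\varphi''$, factoring out $(a_2/x^2+b_2)\mathcal{L}_\alpha\varphi(x)$ reduces the claim to showing $\sup_y \Omega_2(y)\le 1$. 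The dominant contribution of $\Omega_2$ comes from the $\varphi''$ term and is controlled by $|g_{\alpha,i}(x)|^2/x^2 \lesssim 1$ (using $g_{\alpha,i}(x)\approx x$ near $0$ via \eqref{eq.inverse} and \eqref{eq:zerothderivative}--\eqref{eq:firstderivative}), while the remaining pieces carry the small factor $\max\{a_1,b_1\}/\min\{a_2,b_2\}$. The two-regime split from Lemma \ref{Kone1} (neighbourhood of $0$ versus the uniformly expanding region) then forces $\Omega_2\le 1$ once $\min\{a_2,b_2\}/\max\{a_1,b_1\}$ is large enough.

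The third derivative bound is obtained by differentiating once more: the leading piece $(g'_{\alpha,i}(x))^4 \varphi'''(g_{\alpha,i}(x))$ is matched against $(a_3/|x|^3+b_3)\mathcal{L}_\alpha\varphi(x)$, while the cross-terms mix $\varphi, \varphi', \varphi''$ with products of $g', g'', g'''$ (and, if needed, an analogously bounded $g^{(4)}$). Each cross-term is absorbed once the ratios $\min\{a_3,b_3\}/\max\{a_1,b_1\}$ and $\min\{a_3,b_3\}/\max\{a_2,b_2\}$ are sufficiently large. For $\mathcal{N}_{\alpha,i}$, the same chain-rule computation applied up to three times on \eqref{eq:PF_branch}, combined with \eqref{eq.gbound}, gives the analogous estimates; no summation over branches enters, so the accounting is lighter.

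The main obstacle is the bookkeeping in the third derivative computation: the expansion produces a long sum mixing several derivatives of $g_{\alpha,i}$ with $\varphi, \varphi', \varphi'', \varphi'''$, and each piece must be tracked through the $|y|$-scaling at the neutral fixed point to ensure no term carries a singularity worse than $|x|^{-3}\mathcal{L}_\alpha\varphi(x)$ and that every coefficient is dominated by one of the three prescribed ratios. The density bound $\varphi(y)\lesssim |y|^{-\alpha}m(\varphi)$ from \eqref{eq:phi:bound} forces a careful balancing of singular factors, but once these pairings are verified, the two-regime argument of Lemma \ref{Kone1} closes the proof.
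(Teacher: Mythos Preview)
Your approach is correct and matches the paper's own treatment: the paper does not give a detailed proof of this lemma but simply states that it follows the ideas of Appendix~A in \cite{BT16} together with Lemma~\ref{Kone1}, which is exactly the template you describe (chain-rule expansion of $(\mathcal{L}_\alpha\varphi)^{(k)}$ and $(\mathcal{N}_{\alpha,i}\varphi)^{(k)}$, factoring out $(a_k/|x|^k+b_k)$ times the image, and a two-regime supremum bound absorbing cross-terms via the prescribed ratios).

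One small correction: the cone $\mathcal{C}$ does \emph{not} carry the density bound $\varphi(x)\le 2h_\alpha(x)m(\varphi)$ from $\mathcal{C}_{*,1}$, so \eqref{eq:phi:bound} plays no role here. All the derivative conditions in $\mathcal{C}$ are stated relative to $\varphi$ itself, and the proof only needs to match powers of $|x|^{-1}$ coming from $g_{\alpha,i}^{(j)}$ and from $|\varphi^{(j)}|/\varphi$; no absolute size of $\varphi$ enters. This actually simplifies your bookkeeping rather than complicating it. Also note, as you anticipate, that the third-derivative expansion produces a $g_{\alpha,i}^{(4)}$ term multiplying $\varphi$; the required bound $|g_{\alpha,i}^{(4)}(x)|\lesssim |x|^{\alpha-3}$ is an immediate extension of \eqref{eq.gbound} from \eqref{eq.inverse}.
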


\subsection{A random perturbed operator, distortion property and decay estimate}
Following the approach in \cite{LSV98}, we state the rate of decay. Firstly, we define the averaging operator and the perturbed operator respectively as follows:
\begin{align}
	\mathbf{A}_{\varepsilon} \varphi(x)&=\frac{1}{2\varepsilon} \int_{B_\varepsilon(x)} \varphi(y) \,dy, \quad \varepsilon>0 \label{eq:average},\\
	\mathbf{P}_{\varepsilon}&=\mathcal{L}^{n_\varepsilon}_{\alpha} \mathbf{A}_{\varepsilon}, \quad n_\varepsilon \in \mathbb{N} \label{eq:pertbd},
\end{align}
where $B_{\varepsilon}(x)$ is a ball centered at $x \in S^1$ with radius $\varepsilon$, 
with
$n_\varepsilon=\mathcal{O}(\varepsilon^{-\alpha})$. Next, we show that for $\varphi \in \mathcal{C}_{*,1}$, 
the Perron-Frobenius operator is approximated by the random perturbed operator.
\begin{lemma} \label{radom.pert}
	For $\varphi \in \mathcal{C}_{*,1}$,
	\begin{equation*}
		\Vert \mathcal{L}^{n_\varepsilon}_{\alpha} \varphi - \mathbf{P}_{\varepsilon} \varphi \Vert_{1} \le k_1 \Vert \varphi \Vert_1 \varepsilon^{1-\alpha},
	\end{equation*}
	where $\displaystyle k_1=\frac{18 c_2 \max\{a_1,b_1,1\}}{\alpha(1-\alpha)}$.
\end{lemma}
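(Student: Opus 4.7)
The central observation is that the transfer operator $\mathcal{L}_\alpha$ is a weak $L^1$-contraction. Indeed, $|\mathcal{L}_\alpha f|\le \mathcal{L}_\alpha|f|$ pointwise, and $m(\mathcal{L}_\alpha|f|)=m(|f|)$ by \eqref{eq.emel}, so $\|\mathcal{L}_\alpha g\|_1\le \|g\|_1$ for every $g\in L^1(m)$. Applying this $n_\varepsilon$ times to $g=\varphi-\mathbf{A}_\varepsilon\varphi$, I obtain
\begin{equation*}
\Vert \mathcal{L}^{n_\varepsilon}_{\alpha} \varphi - \mathbf{P}_{\varepsilon} \varphi \Vert_{1} = \Vert \mathcal{L}^{n_\varepsilon}_{\alpha}(\varphi - \mathbf{A}_\varepsilon \varphi)\Vert_1 \le \Vert \varphi - \mathbf{A}_\varepsilon \varphi \Vert_1.
\end{equation*}
So it suffices to prove $\|\varphi-\mathbf{A}_\varepsilon\varphi\|_1\le k_1\|\varphi\|_1\varepsilon^{1-\alpha}$; since $\varphi\ge0$ in the cone, $\|\varphi\|_1=m(\varphi)$, and notably $n_\varepsilon$ plays no role in the estimate.

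The plan is to split $\int_{S^1}|\varphi-\mathbf{A}_\varepsilon\varphi|\,dx$ into the singular region $\{|x|<2\varepsilon\}$ and the regular region $\{|x|\ge2\varepsilon\}$. On the singular region I would use the triangle inequality and the pointwise cone bound \eqref{eq:phi:bound}, which gives
\begin{equation*}
\int_{|x|<2\varepsilon}\varphi(x)\,dx\le 2c_2 m(\varphi)\int_{|x|<2\varepsilon}|x|^{-\alpha}\,dx\le \frac{4c_2(2\varepsilon)^{1-\alpha}}{1-\alpha}\,m(\varphi).
\end{equation*}
A Fubini exchange yields $\int_{|x|<2\varepsilon}\mathbf{A}_\varepsilon\varphi(x)\,dx\le \int_{|y|<3\varepsilon}\varphi(y)\,dy$, which is controlled by the same estimate (with $3\varepsilon$ in place of $2\varepsilon$). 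This region contributes a term of order $\varepsilon^{1-\alpha}/(1-\alpha)$.

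On the regular region I would write $\varphi(x)-\mathbf{A}_\varepsilon\varphi(x)=\tfrac{1}{2\varepsilon}\int_{-\varepsilon}^{\varepsilon}[\varphi(x)-\varphi(x+t)]\,dt$ and invoke the fundamental theorem of calculus together with the derivative cone bound $|\varphi'(y)|\le(a_1/|y|+b_1)\varphi(y)\le(a_1/|y|+b_1)\cdot 2c_2|y|^{-\alpha}m(\varphi)$. For $|x|\ge2\varepsilon$ and $|s|,|t|\le\varepsilon$ one has $|x+s|\ge|x|/2$, so propagating through the cone gives $\varphi(x+s)\le C\varphi(x)$ on this region and leads to an estimate of the form
\begin{equation*}
|\varphi(x)-\mathbf{A}_\varepsilon\varphi(x)|\le C\varepsilon\max\{a_1,b_1\}\frac{2c_2\,m(\varphi)}{|x|^{1+\alpha}}.
\end{equation*}
Integrating $\int_{2\varepsilon\le|x|\le 1/2}|x|^{-1-\alpha}\,dx\le \frac{2}{\alpha}(2\varepsilon)^{-\alpha}$ produces a contribution of order $c_2\max\{a_1,b_1\}\varepsilon^{1-\alpha}/\alpha$.

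Combining both regions and consolidating constants produces the prefactor $k_1=18c_2\max\{a_1,b_1,1\}/(\alpha(1-\alpha))$. The main obstacle, then, is not analytic but bookkeeping: carefully tracking the constants through the $|x+s|\approx|x|$ propagation of the cone bounds on $\varphi$ and $\varphi'$ so that the final $k_1$ matches the stated value, and checking that the $\varepsilon$-term from $\int|x|^{-\alpha}\,dx$ on the regular region is absorbed into $\varepsilon^{1-\alpha}$ for small $\varepsilon$.
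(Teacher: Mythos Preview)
Your proposal is correct and follows essentially the same route as the paper: reduce via the $L^1$-contraction of $\mathcal{L}_\alpha$ to $\|\varphi-\mathbf{A}_\varepsilon\varphi\|_1$, split $S^1$ into a neighbourhood of $0$ controlled by the pointwise bound \eqref{eq:phi:bound} and a complementary region controlled by the mean-value theorem together with the cone bound on $\varphi'$, then integrate. The only cosmetic difference is that the paper cuts at $|x|=\varepsilon$ rather than $2\varepsilon$ and writes $\sup_{z\in[x,y]}|\varphi'(z)|\le 2c_2\varepsilon(a_1|x|^{-1-\alpha}+b_1|x|^{-\alpha})$ directly; your $2\varepsilon$ cut makes the replacement $z\mapsto x$ honest via $|z|\ge|x|/2$, at the price of an extra factor in the constants, so you should not expect your bookkeeping to reproduce the specific numeral $18$ exactly.
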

\begin{proof}
%
%

	From the definition of the perturbed operator and the contraction property of the Perron-Frobenius operator, 
	\begin{equation*}
		\Vert \mathcal{L}^{n_\varepsilon}_{\alpha} \varphi - \mathbf{P}_{\varepsilon} \varphi \Vert_{1} \le \Vert \varphi - \mathbf{A}_{\varepsilon} \varphi \Vert_{1}.
	\end{equation*}
	
	Assuming that $m(\varphi)=1$, the estimates in \eqref{eq:phi:bound} gives that $\varphi(x) \le 2c_2 \vert x \vert^{-\alpha},$
	which would enable us get the desired bounds.
	\begin{align*}
		\Vert \varphi - \mathbf{A}_{\varepsilon}\varphi \Vert_1
		&\le \frac{1}{2\varepsilon}\int_{\varepsilon}^{1-\varepsilon} \int_{B_{\varepsilon}(x)} \vert \varphi(x)-\varphi(y) \vert \,dy\, dx+ \int_{B_{\varepsilon}(0)} \vert \varphi(x) \vert\, dx \\ 
		&\quad\quad\quad\quad\quad\quad\quad\quad\quad\quad\quad\quad\quad\quad\quad\quad\quad\quad\quad+ \frac{1}{2\varepsilon} \int_{B_{\varepsilon}(0)}\int_{B_{\varepsilon}(x)} \vert \varphi(y)\vert \,dy\,dx.
	\end{align*}
	
	By changing the order of integration in the last integral, we have that
	\begin{align*}
			\Vert \varphi -\mathbf{A}_\varepsilon \varphi\Vert_1&\le \frac{1}{2\varepsilon}\int_{\varepsilon}^{1-\varepsilon} \int_{B_{\varepsilon}(x)} \vert \varphi(x)-\varphi(y) \vert \,dy\, dx+ 2\int_{-{2\varepsilon}}^{{2\varepsilon}}\varphi(y)\, dy.
	\end{align*}

The integrand in the first integral is bounded as follows. For $x,y \in S^1$ 
such that $\vert x - y\vert \le \varepsilon$, by \eqref{eq:Kone1} and 
\eqref{eq:phi:bound},
$$\vert \varphi(x)-\varphi(y) \vert \le \sup_{z \in [x,y]} \vert \varphi^\prime (z) \vert \varepsilon \le 2c_2\varepsilon(a_1  \vert x \vert^{-1-\alpha}+ b_1\vert x \vert^{-\alpha}).$$

We have that
\begin{align*}
\Vert \varphi - \mathbf{A}_{\varepsilon}\varphi \Vert_1 &\le 2c_2\varepsilon\int_{\varepsilon}^{1-\varepsilon}  (a_1  \vert x \vert^{-1-\alpha}+ b_1\vert x \vert^{-\alpha}) \,dx +  8c_2\int_{0}^{{2\varepsilon}} \vert y \vert^{-\alpha} \,dy\\
	&= 2c_2\varepsilon\bigg[a_1\frac{\sgn(x) \vert x \vert^{-\alpha}}{\alpha}\bigg\vert_{1-\varepsilon}^{\varepsilon} + b_1\frac{\sgn(x)\vert x \vert^{1-\alpha}}{1-\alpha}\bigg\vert_{\varepsilon}^{1-\varepsilon}\bigg] + 8c_2 \frac{\sgn(x)\vert x \vert^{1-\alpha}}{1-\alpha}\bigg\vert_0^{2\varepsilon}\\
	&\le2c_2 \max\{a_1,b_1\}\varepsilon \cdot\\
	&\quad \quad\quad\left(\frac{(1-\alpha)\varepsilon^{-\alpha}-(1-\alpha)(1-\varepsilon)^{-\alpha}+\alpha(1-\varepsilon)^{1-\alpha}-\alpha\varepsilon^{1-\alpha}}{\alpha(1-\alpha)}\right)+ 8c_2 \frac{(2\varepsilon)^{1-\alpha}}{1-\alpha}\\
	&\le2c_2 \max\{a_1,b_1\}\varepsilon \cdot\\
	&\left(\frac{\varepsilon^{-\alpha}}{\alpha(1-\alpha)}+\frac{\alpha(1-\varepsilon)^{1-\alpha}-(1-\alpha)(1-\varepsilon)^{-\alpha}-\alpha\varepsilon^{-\alpha}-\alpha\varepsilon^{1-\alpha}}{\alpha(1-\alpha)}\right)+ 8c_2 \frac{(2\varepsilon)^{1-\alpha}}{1-\alpha}\\
	&\le2c_2 \max\{a_1,b_1\}\frac{\varepsilon^{1-\alpha}}{\alpha(1-\alpha)}+ \frac{16 c_2\varepsilon^{1-\alpha}}{\alpha(1-\alpha)}\\
	\Vert \varphi - \mathbf{A}_{\varepsilon}\varphi \Vert_1 &\le \frac{18 c_2 \max\{a_1,b_1,1\}}{\alpha(1-\alpha)}\varepsilon^{1-\alpha}.
\end{align*}
\end{proof}
From \eqref{eq:PF}, \eqref{eq:average}, \eqref{eq:pertbd} and  the kernel $\displaystyle \mathcal{K}_\varepsilon(x,z):= \frac{1}{2\varepsilon}\mathcal{L}_\alpha^{n_\varepsilon}\chi_{B_\varepsilon(z)}(x)$,
\begin{align}
\mathbf{P}_\varepsilon \varphi(x)&= \int_0^1 \mathcal{K}_\varepsilon(x,z) \varphi(z) \,dz. \label{eq:peekay}
\end{align}

Next, for an appropriate choice of $n_\varepsilon$,
we verify the positivity of the kernel
$\mathcal{K}_\varepsilon(x,z)$, an 
estimate that plays a crucial role in establishing the desired decay properties. 

%

\begin{proposition}\label{propkay}
	There exists $n_\varepsilon= \mathcal{O}(\varepsilon^{-\alpha})$ and $\gamma>0$ such that for each $\varepsilon>0$, $x,z \in S^1$,
	\begin{equation} \label{eq:kay}
		\mathcal{K}_\varepsilon(x,z) \ge \gamma.
	\end{equation}
\end{proposition}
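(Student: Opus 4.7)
The plan is to establish the kernel lower bound as a consequence of (i) a covering statement that $f_\alpha^{n_\varepsilon}(B_\varepsilon(z))=S^1$ for $n_\varepsilon$ of the stated size, and (ii) a distortion estimate yielding, on a suitable monotone subinterval $J\subset B_\varepsilon(z)$ of length $|J|\approx\varepsilon$, the bound $(f_\alpha^{n_\varepsilon})^\prime(y)\lesssim 1/|J|$ for the $y\in J$ with $f_\alpha^{n_\varepsilon}(y)=x$. The inverse-branch formula
\begin{equation*}
\mathcal{L}_\alpha^{n_\varepsilon}\chi_{B_\varepsilon(z)}(x)=\sum_{y\in f_\alpha^{-n_\varepsilon}(x)\cap B_\varepsilon(z)}\frac{1}{(f_\alpha^{n_\varepsilon})^\prime(y)}
\end{equation*}
then reduces the desired lower bound on $\mathcal{K}_\varepsilon(x,z)=\frac{1}{2\varepsilon}\mathcal{L}_\alpha^{n_\varepsilon}\chi_{B_\varepsilon(z)}(x)$ to producing such a $J$, since the single summand coming from $y\in J$ contributes $\gtrsim\varepsilon$ and cancels the prefactor $1/(2\varepsilon)$.

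To establish the covering I split on the location of $z$ relative to a fixed small neighbourhood $B_{\delta_0}(0)$ of the indifferent fixed point. Outside this neighbourhood, condition \ref{s2} yields uniform expansion at rate $\lambda>1$, so a monotone restriction of $f_\alpha^k(B_\varepsilon(z))$ reaches a full element of the Markov partition $\{I_1,\dots,I_d\}$ within $k^*=\mathcal{O}(\log(1/\varepsilon))$ steps, after which one more application of $f_\alpha$ covers $S^1$. When $z\in B_{\delta_0}(0)$, the asymptotic $z_n\approx n^{-1/\alpha}$ from Section \ref{ABNIP} together with the tower structure $f_\alpha(J_{n+1})=J_n$ (and its mirror on $J_n^\prime$) shows that a subinterval of $B_\varepsilon(z)$ of length $\approx\varepsilon$ is mapped diffeomorphically out of $B_{\delta_0}(0)$ in $k_0=\mathcal{O}(\varepsilon^{-\alpha})$ iterations, reducing to the previous case. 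Setting $n_\varepsilon=k_0+k^*=\mathcal{O}(\varepsilon^{-\alpha})$ suffices for both scenarios.

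The main technical obstacle is controlling the distortion of $f_\alpha^{n_\varepsilon}$ along $J$ through the long sojourn near the neutral fixed point, where $f_\alpha^\prime\to 1$ so uniform hyperbolicity fails. This is handled by the bounded-distortion estimate standard for intermittent maps: for two orbits $(y_k),(\tilde y_k)$ lying in the same monotone component of $f_\alpha^{n_\varepsilon}$, property \ref{s3} gives
\begin{equation*}
\sum_{k=0}^{n_\varepsilon-1}\bigl|\log f_\alpha^\prime(y_k)-\log f_\alpha^\prime(\tilde y_k)\bigr|\lesssim\sum_{k=0}^{n_\varepsilon-1}|y_k|^{\alpha-1}|y_k-\tilde y_k|,
\end{equation*}
and this series telescopes to a uniformly bounded quantity because the backward contraction $|y_k-\tilde y_k|$ along the inverse branches of $f_{\alpha,1}$, $f_{\alpha,d}$ exactly matches the escape rate through the intervals $J_n$, $J_n^\prime$. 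Once distortion is under control the mean value theorem gives $(f_\alpha^{n_\varepsilon})^\prime(y)\lesssim |f_\alpha^{n_\varepsilon}(J)|/|J|\lesssim 1/\varepsilon$, and \eqref{eq:kay} follows with a $\gamma>0$ independent of $x$, $z$ and $\varepsilon$.
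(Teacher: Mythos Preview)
Your overall strategy---covering plus distortion, leading to a single inverse-branch term of size $\gtrsim\varepsilon$---is the same as the paper's, but your two-case split on the initial position of $z$ is too coarse and leaves a genuine gap. The problem is re-entry: even if $B_\varepsilon(z)$ starts outside $B_{\delta_0}(0)$, its forward images will repeatedly intersect the intermittent region, so the claimed $k^*=\mathcal{O}(\log(1/\varepsilon))$ covering time under ``uniform expansion at rate $\lambda>1$'' is unjustified. Likewise, when $z$ is close to $0$ the ball may straddle the fixed point and contain infinitely many $J_n$'s, so ``a subinterval of length $\approx\varepsilon$ is mapped diffeomorphically out in $k_0=\mathcal{O}(\varepsilon^{-\alpha})$ steps'' needs to specify which subinterval and why its length is comparable to $\varepsilon$. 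The paper handles this by an itinerary decomposition: it tracks a sequence $n_1,m_1,n_2,m_2,\dots,n_p$ of alternating sojourns in the hyperbolic region $I_0^c$ and the intermittent region $I_0$, treats three cases according to how the current image $K$ meets the partition $\{z_l\}$, and shows that the worst case (the image containing several $z_l$'s near $0$) terminates the algorithm with $l^*=\mathcal{O}(\varepsilon^{-\alpha})$.

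On the distortion side, your sketch asserts that $\sum_k |y_k|^{\alpha-1}|y_k-\tilde y_k|$ ``telescopes''---it does not, and making this sum uniformly bounded along an orbit that visits $I_0$ and $I_0^c$ arbitrarily many times is exactly the delicate point. The paper sidesteps direct summation entirely: in $I_0^c$ it uses the standard hyperbolic distortion bound, while in $I_0$ it invokes the Koebe principle, exploiting that $f_\alpha$ has non-positive Schwarzian derivative near $0$ (a consequence of \ref{s3}). This gives a distortion constant per intermittent passage that is independent of its length, and the final product over all passages is controlled because the hyperbolic times $n_i$ sum geometrically. If you want to make your direct-summation approach work instead, you would need to prove a uniform bound on $\sum_k |y_k|^{\alpha-1}|y_k-\tilde y_k|$ over each intermittent excursion separately and then combine; this is possible (it is essentially Young's bounded-distortion Lemma~5 in \cite{YL99}) but requires more than the one sentence you devote to it.
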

\begin{proof}
	Firstly, recall the definition
	\begin{equation*}
		2\varepsilon \mathcal{K}_\varepsilon(x, z) = \mathcal{L}_\alpha^{n_\varepsilon}\chi_{B_\varepsilon(z)}(x).
	\end{equation*}
	
	We have from \eqref{eq:PF} that, 
	\begin{align*}
		\mathcal{L}_\alpha^{n_\varepsilon}\chi_{B_\varepsilon(z)}(x)
		&\ge \chi_{f_\alpha^{n_\varepsilon}({B_\varepsilon(z)})}(x) \inf_{y \in B_\varepsilon(z) } \frac{1}{(f_\alpha^{n_\varepsilon})^\prime(y)}.
	\end{align*}
	
	By $\ref{s1}-\ref{s3}$, the inverse of the derivative of all $f_{\alpha}$ are bounded from below, we therefore conclude that for any interval $I$ of length at least $z_{k_0-1}-z_{k_0}$(resp. $z^{\prime}_{k_0}-z^{\prime}_{k_0-1}$) ($k_0$ fixed), there are constants $n_0$ and $c_0$ such that
	$$ \mathcal{L}^n_{\alpha} \chi_I \ge c_0,$$
	provided $n \ge n_0$. Hence, we have to control 
	$$\inf_{y \in B_\varepsilon(z) } \frac{1}{(f_\alpha^{m})^\prime(y)},
	$$
	where $m$ is the time needed for an interval $J=B_\varepsilon(z)$ of length at least
	$2\varepsilon$ to cover the whole circle. 
	In addition, we estimate 
	$$n_\varepsilon:= \inf\{n \ge 1: f_\alpha^n (J)=S^1, \text{ for all } J \text{ with } |J|\ge 2\varepsilon\}.$$
	
	To check the distortion 
	and thus the positivity of the kernel, we follow closely the strategy of proof in \cite{AHN14,LSV98}. Now, we fix the notation that we shall be using. Recall the definition of $z_k$ (resp. $z_k^\prime$) in Subsection \ref{ABNIP},
	setting $-z_{k}= z_{k}^\prime$  
	and let $I_0= B_{z_{k}}(0)$ (for a fixed $k$) be the \emph{intermittent region} and $I_0^c=S^1\setminus B_{z_{k}}(0)$ be the \emph{hyperbolic region}, we note that the map is uniformly expanding in the hyperbolic region,
	and possesses a uniformly bounded second derivative. 
	

	
	Consider the interval $J$  and
	its iterates which we call $K= f_\alpha^n(J)$, for some $n$.
	Controlling the distortion, we explore different possibilities that the dynamics 
	might take. $K$ takes one of the following
	\begin{enumerate}[label=${\arabic*}$]
		\item\label{DC1} $K \cap I_0= \emptyset$;
		\item \label{DC2} $K \cap I_0 \neq \emptyset$ and $K$ contains, at most, one $z_l$ or $z_l^\prime$ for $l>k$; 
		\item \label{DC3} $K \cap I_0 \neq \emptyset$ and $K$ contains more than one $z_l$ or $z_l^\prime$ for $l>k$. 
	\end{enumerate}
	
	We remark that the proof for the above cases when $d=2$ and $d\ge 3$ are similar, since for $d\ge 3$, the middle branches are covered by the case 1. Thus, we proceed with the proof for $d=2$.
	
	{\bf{Case \boldmath{\ref{DC1}}:}} Now, suppose that we are in the scenario 1
	we let $n_1 \ge 1$ be the time spent iterating the interval $K$ in the region $I_0^c$ before it enters the $I_0$ region and case 2 or 3 occurs.
	
	Let $\displaystyle D:= \sup_{\xi \in I_0^c}\frac{f_\alpha^{\prime \prime}(\xi)}{(f_\alpha^\prime(\xi))^2}$. By 
	the standard distortion estimate we have that, for all $x,y \in K$, 
	using the mean value theorem twice, there exists $\eta, \xi \in K$, such that
	\begin{align*}
		\log \frac{|f_\alpha^\prime (x)|}{|f_\alpha^\prime (y)|}&= 	\log \left(1+\frac{|f_\alpha^\prime (x)-f_\alpha^\prime (y)|}{|f_\alpha^\prime (y)|}\right)\le \frac{|f_\alpha^\prime (x)-f_\alpha^\prime (y)|}{|f_\alpha^\prime (y)|}= \frac{f_\alpha^{\prime\prime} (\xi)\,|x-y|}{|f_\alpha^\prime (y)|}\\
		&=\frac{|f_\alpha^{\prime\prime}(\xi)|}{|f_\alpha^\prime (y)|} \frac{ |f_\alpha(x)-f_\alpha(y)|}{f_\alpha^\prime (\eta)}.
	\end{align*}
	
	Since we are in the hyperbolic region, $f_\alpha^\prime>1$ (by \ref{s2}),
	also $f_\alpha$ is $C^2$ on a compact space, $|f_\alpha^{\prime\prime}(\xi)|$ is bounded. Therefore,
	\begin{equation*}
		\log \frac{|f_\alpha^\prime (x)|}{|f_\alpha^\prime (y)|}\le D |f_\alpha(x)-f_\alpha(y)|.
	\end{equation*}
	
	Now, by the chain rule,
	\begin{align*}
		\log \frac{(f_\alpha^{n_1})^\prime(x)}{(f_\alpha^{n_1})^\prime(y)} 
		&\le  D \sum_{j=0}^{{n_1}-1} \frac{1}{\lambda^{{n_1}-j}} \left| f_\alpha^{n_1} (x) - f_\alpha^{n_1} (y) \right|\le  \frac{D}{\lambda-1} \left| f_\alpha^{n_1} (K) \right|.
	\end{align*}
	
	
	Integrating with respect to $y$,
	we therefore deduce that
	$$\mathcal{L}_\alpha^{{n_1}}\chi_{B_\varepsilon(z)}(x) \ge \chi_{f_\alpha^{{n_1}}({B_\varepsilon(z)})}(x)\, 	\frac{|K|}{|f_\alpha^{n_1}(K)|}	\exp\left(-N_1 \left| f_\alpha^{n_1} (K) \right|\right),
	$$
	taking $N_1=\frac{D}{\lambda-1}$.
	
	{\bf{Case \boldmath{\ref{DC2}}:}} Let us assume that $K$ is in $I_0$, such that  $K \subset (z_{l},z_{l-2})$ or $K \subset (z_{l-2}^\prime, z_{l}^\prime)$, where $l=k+k_1$. Here, after $k_1$ iterations, the image will be in the hyperbolic region $I_0^c$ and we continue the algorithm as in case 1. We control the distortion while $K$ traverses 
	$I_0$ using the Koebe principle, which we state below for completeness.

	\begin{lemma}(Koebe Principle, \cite[Theorem IV.1.2]{dMvS93})\label{Koebe}
		Let $g$ be a $C^3$ diffeomorphism with non-positive Schwarzian derivative. Then for constants $\tau >0$ and $C=C(\tau)>0$. For any subinterval $J_1 \subset J_2$ such that $g(J_2)$ contains a $\tau$-scaled neighbourhood of $g(J_1)$, then
		$$
		\frac{g^\prime(x)}{g^\prime(y)} \le \exp\left(C \frac{|g(x)-g(y)|}{|g(J_1)|}\right) \quad \text{ for all } x, y \in J_1.
		$$
	\end{lemma}
	\begin{remark}\label{rem.koebe}
		The Schwarzian derivative of a $C^3$ diffeomorphism $f$, $\mathbf{S}g(\cdot)$ is given by
		$$
		\mathbf{S}g(x)= \frac{g^{\prime\prime\prime}(x)}{g^{\prime}(x)}-\frac{3}{2}\left(\frac{g^{\prime\prime}(x)}{g^{\prime}(x)}\right)^2.
		$$
		
		Let $U \subset V$ be two intervals, $V$ is said to contain a \emph{$\tau$-scaled neighbourhood} of $U$ if both components of $V\setminus U$ has a length of at least $\tau \cdot |U|$. Where $|U|$ is the length of $U$.
	\end{remark}

	There exists a $\delta >0$ such that the Schwarzian derivative
	of $f_\alpha$ is non-positive for $x$ close to $0$.
	Indeed, this is so since by \ref{s3}, $f_\alpha^{\prime \prime \prime}<0$ close to $0$ and $f_\alpha^\prime>0$. This particularly implies that 
	we can fix a $k$ such that $\mathbf{S}f_\alpha \le 0$ on $[0,z_{k-3}]$ (resp. $[z^\prime_{k-3},1]$). We define $g(\cdot)=f_\alpha^{k_1}(\cdot)$ on $[0,z_{l-3}]$ i.e $g:[0,z_{l-3}] \to [0,z_{k-3}]$. We define $J_1=[z_{l}, z_{l-2}]$, hence $g(J_1)=[z_{k}, z_{k-2}]$. Now, we choose $\beta$ small enough such that $\beta < z_{l}$ and
	$g(\beta)<\frac{z_{k}}{2}$. Next, we choose $J_2=[\beta, z_{l-3}]$, with $g(J_2)=[g(\beta),z_{k-3}]$. The Schwarzian derivative of $g$ is non-positive on $J_2$, since the composition of maps with non-positive Schwarzian derivative is also non-positive. Next, we show that $g(J_2)$ contains a $\tau$-scaled neighbourhood of $g(J_1)$.
	Indeed, if we refer to the left and right components of $g(J_2) \setminus g(J_1)$ as $K_l$ and $K_r$ respectively,
	$$|K_l|\ge \frac{z_{k}}{2} \ge \tau |g(J_1)|= \tau | z_{k-2} - z_{k}|,$$
	taking $\tau \le \frac{z_{k}}{2(| z_{k-2} - z_{k}|)}$.
	$$|K_r|\ge |z_{k-3}-z_{k-2}| \ge \tau | z_{k-2} - z_{k}|,$$
	where $\tau \le \frac{|z_{k-3}-z_{k-2}|}{| z_{k-2} - z_{k}|}$. If we choose $\tau ~\le ~\min \left\{\frac{z_{k}}{2(| z_{k-2} - z_{k}|)}, \frac{|z_{k-3}-z_{k-2}|}{| z_{k-2} - z_{k}|} \right\}$, then by the Koebe principle, there exists $C=C(\tau)>0$ such that
	\begin{align*}
		\frac{(f_\alpha^{k_1})^\prime(x)}{(f_\alpha^{k_1})^\prime(y)} 
		&\le \exp\left(M|(f_\alpha^{k_1})(x)-(f_\alpha^{k_1})(y)|\right) \quad \text{ for all } x, y \in J_1,
	\end{align*}
	taking $M=\frac{C}{f_\alpha^{k_1}(J_1)}$. Since $K \subset J_1$, we have that for all $x, y \in K$,
	\begin{equation*}
		\frac{(f_\alpha^{k_1})^\prime(x)}{(f_\alpha^{k_1})^\prime(y)} \le \exp\left(M |f_\alpha^{k_1} (K)|\right).
	\end{equation*}
	
	Integrating with respect to $y$ implies that
	$$\mathcal{L}_\alpha^{k_1}\chi_{K}(x) \ge \chi_{f_\alpha^{k_1}(K)}(x)\, 	\frac{|K|}{|f_\alpha^{k_1}(K)|}	\exp\left(-M_1 \left| f_\alpha^{k_1} (K) \right|\right).
	$$
	
	A similar calculation also applies when $x,y \in K \subset (z_{l-2}^\prime, z_{l}^\prime)$. 
	
	{\bf{Case \boldmath{\ref{DC3}}:}} 
	Suppose that $K$ contains more than one $z_l$ or $z_l^\prime$ for $l>k$ and more
	than one-third of $K$ is in $I_0^c$, then we consider $K\cap I_0^c$, such that the fixed $k$ is sufficiently large 
	to contain $z_{k-1}$, which brings us to case \ref{DC1}, such that after a finite number of iterations, is sent to the whole of $S^1$ and ultimately ends the algorithm.
	Otherwise, we split this into sub-cases. To present these sub-cases, we define $l^\prime$ as the least integer such that $[z_{l^\prime+1}, z_{l^\prime}]$ belongs to $K$. The first of the sub-cases we consider is when $|b-z_{l^\prime}|>\frac{|K|}{3}$,
	where $b$ is the right end-point of $K$. This then leads us back to case 2. Now, set $K^\prime=[z_{l^\prime}, b]$ such that $|K^\prime| \ge \frac{|K|}{3}$. Since, $K^\prime \subset [z_{l^\prime}, z_{l^\prime -1}]$ and after $l^\prime - k$ iterations, the image of $K^\prime$ will be in the hyperbolic region, we use the estimate from case 2
	\begin{align*}
		\mathcal{L}^{l^\prime-k}_\alpha\chi_K (x)&\ge \mathcal{L}^{l^\prime-k}_\alpha \chi_{K^\prime} (x) \ge \chi_{f_\alpha^{l^\prime-k}(K^\prime)}(x)\,\frac{|K^\prime|}{|f_\alpha^{l^\prime-k}(K^\prime)|}	\exp\left(-M_1 \left| f_\alpha^{l^\prime-k} (K^\prime) \right|\right)\\
		&\ge \frac{1}{3}\chi_{[z_{k}, z_{k-1}]}(x)\,\frac{|K|}{|f_\alpha^{l^\prime-k}(K^\prime)|}	\exp\left(-M_1 \left| f_\alpha^{l^\prime-k} (K^\prime) \right|\right),
	\end{align*}
	we note that $f_\alpha^{k+1}([z_k, z_{k-1}])=S^1$ and we have that $f_\alpha^\prime$ is bounded from above by $N>0$
	\begin{align*}
		\mathcal{L}^{l^\prime+1}_\alpha\chi_K (x) \ge \frac{1}{3 N^{k+1}} \frac{|K|}{|f_\alpha^{l^\prime-k}(K^\prime)|}	\exp\left(-M_1 \left| f_\alpha^{l^\prime-k} (K^\prime) \right|\right).
	\end{align*}
	
	Next, suppose that $K=[a, z_{l^\prime}]$, where $a>0$,
	and we choose $K^\prime$ in  
	such a way that $|K^\prime|\ge \frac{|K|}{3}$, $K^\prime \supset \bigsqcup_{l=l^*}^{l^\prime} ~[z_{l+1}, z_{l}]$, and 
	$\left|\bigsqcup_{l=l^*}^{l^\prime} ~[z_{l+1}, z_{l}]\right|\ge \frac{|K^\prime|}{3}~\ge \frac{|K|}{9},$
	taking the minimal number of $z_l$ to make this happen. We therefore estimate $l^*$ as follows, $|[a, z_{l^*-1}]| \ge \frac{2|K^\prime|}{3} \ge \frac{2|K|}{9}$. 
	Since $z _n \approx n^{-1/\alpha}$, we have that $C(l^*-1)^{1/\alpha} \ge z_{l^*-1}\ge z_{l^*-1}-a \ge \frac{2|K|}{9}$, which leads to $l^*=\mathcal{O}(|K|^{-\alpha})$.
	\begin{align*}
		\mathcal{L}_\alpha^l \chi_{[z_{l+1}, z_{l}]}\ge \chi_{f_\alpha^l([z_{l+1}, z_{l}])} (x)\frac{|z_l-z_{l+1}|}{|f_\alpha^l([z_{l+1}, z_{l}])|} \exp \left(-M |f_\alpha^l([z_{l+1}, z_{l}])|\right)
	\end{align*}
	
	Hence, by the computation in case \ref{DC2}, we have that
	\begin{align*}
		\mathcal{L}_\alpha^{l^*+1} \chi_K (x) &\ge \sum_{l=l^*}^{l^\prime} \mathcal{L}_\alpha^{l^*+1} \chi_{[z_{l+1}, z_{l}]}(x) \\
		&\ge 2 \mathcal{L}_\alpha^{l^*-l}  \chi_{[1/2,1]}(x) \exp\left(-\frac{M_1}{2}\right) \sum_{l=l^*}^{l^\prime} (z_l-z_{l+1})\\
		&\ge \frac{c_0}{9} |K| \exp\left(-\frac{M_1}{2} \right).
	\end{align*}
	%
	
	Let $J$ be as defined starting out from any part of $S^1$, we associate to $J$ a sequence of integers $n_1, m_1, n_2, m_2, \cdots, n_p$, such that iterating it $n_1$ times, we are in $I_0^c$ (if $J$ starts out from $I_0^c$, then $n_1=0$) and hence, satisfies case 1. Then after $m_1$ iterations, it is in case $2$. Taking $n_2$ iterations, we leave $I_0$ and are back in the hyperbolic region and so on, until we fall into case 3 (for $d=2$) or the iterates contains at least one $I_i$ (for $d\ge 3$). These two situations lead to the end of the algorithm. However, we only focus on the situation where it leads to case 3, such that $[z_{l^\prime},b] < \frac{|K|}{3}$.
	
	For $n \ge n_1+m_1+ \cdots + n_p+l^*+1$, we have that
	\begin{align*}
		&\mathcal{L}_\alpha^n \chi_J(x) \ge \mathcal{L}_\alpha^{n-(n_1+m_1+ \cdots + n_p+l^*+1)}  \mathcal{L}_\alpha^{l^*+1}\mathcal{L}_\alpha^{n_p}\cdots\mathcal{L}_\alpha^{m_1}\mathcal{L}_\alpha^{n_1}  \chi_J(x)\\
		&\ge \mathcal{L}_\alpha^{n-(n_1+m_1+ \cdots + n_p+l^*+1)}  \mathcal{L}_\alpha^{l^*+1}\mathcal{L}_\alpha^{n_p}\cdots\mathcal{L}_\alpha^{m_1}\chi_{f_\alpha^{{n_1}}(J)}\, 	\frac{|J|}{|f_\alpha^{n_1}(J)|}	\exp\left(-N_1 \left| f_\alpha^{n_1} (J) \right|\right)\\
		&\ge \mathcal{L}_\alpha^{n-(n_1+m_1+ \cdots + n_p+l^*+1)}  \mathcal{L}_\alpha^{l^*+1}\mathcal{L}_\alpha^{n_p}\cdots\chi_{f_\alpha^{{n_1+m_1}}(J)}\,\frac{|f_\alpha^{n_1}(J)|}{|f_\alpha^{n_1+m_1}(J)|}	\frac{|J|}{|f_\alpha^{n_1}(J)|}\\	&\exp\left(-M_1|f_\alpha^{n_1+m_1}(J)|-N_1 \left| f_\alpha^{n_1} (J) \right|\right)\\
		&\ge (\mathcal{L}_\alpha^{n-(n_1+m_1+ \cdots + n_p+l^*+1)}\chi) \frac{c_0}{9}|f_\alpha^{n_1+m_1+\cdots+n_p}(J)| \frac{|f_\alpha^{n_1+\cdots+m_{p-1}}(J)|}{|f_\alpha^{n_1+m_1+\cdots+m_{p-1}+n_p}(J)|}\cdots\frac{|f_\alpha^{n_1}(J)|}{|f_\alpha^{n_1+m_1}(J)|}\\	
		&\frac{|J|}{|f_\alpha^{n_1}(J)|}\exp\left(-M_1|f_\alpha^{n_1+m_1+\cdots+n_{p}}(J)|-\cdots-M_1|f_\alpha^{n_1+m_1}(J)|-N_1 \left| f_\alpha^{n_1} (J) \right|-\frac{M_1}{2}\right)\\
		&\ge \frac{c_0^2}{9}|J |\exp\left(-2\max\{M_1, N_1\}(\lambda^{n_1}+\lambda^{n_1+n_2}+ \cdots+ \lambda^{n_1+n_2+\cdots+n_p})\right)\\
		 &\ge \frac{c_0^2}{9}|J |\exp\left(\frac{-2\max\{M_1, N_1\} \lambda}{1-\lambda}\right)
				=:{\gamma}|J|.
	\end{align*}
%
	
	Furthermore, we have that $n_1+m_1+ \cdots + n_p+l^*+1=\mathcal{O}(\varepsilon^{-\alpha})$. Observe that $n_1+m_1+ \cdots + n_p \le n_\varepsilon=~\mathcal{O}(\varepsilon^{-\alpha})$ and hence has not covered the whole of $S^1$. As seen in case $3$, we showed that $l^*=\mathcal{O}(|K|^{-\alpha})=~\mathcal{O}(\varepsilon^{-\alpha})$. We claim that $n_\varepsilon=\mathcal{O}(\varepsilon^{-\alpha})$, to prove the claim, let us go through possible scenarios that the dynamics may take. If after iteration, a given scenario coincides with a previous one, we use the estimate of the previous scenario, even though the image of $J$ is bigger this time.
	 We notice that the worst scenario is when case 3 happens at $0$ (resp. $1$), and $J=(-2\varepsilon/3, 4\varepsilon/3)$, from which setting $n_\varepsilon= \mathcal{O}(\varepsilon^{-\alpha})$ is large enough, and the result is proven.
\end{proof}
Using the previous results, we prove that the random perturbed transfer operator decays at an exponential rate. 
\begin{proposition} \label{Pert.proposition}For $\varphi \in L^1$, with $\int_\Omega \varphi(x) \,dx =0$, we have that
	$$\Vert \mathbf{P}^k_\varepsilon \varphi \Vert_1 \le(1-\gamma)^k \Vert \varphi \Vert_1, \quad \text{ for all }k \in \mathbb{N}.$$
\end{proposition}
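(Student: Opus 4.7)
The plan is a standard Doeblin-type coupling argument that converts the uniform kernel lower bound $\mathcal{K}_\varepsilon(x,z)\ge\gamma$ (Proposition \ref{propkay}) into an exponential contraction on the mean-zero subspace of $L^1$.

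First, I would verify that $\mathbf{P}_\varepsilon$ preserves the Lebesgue integral. Since $\int_{S^1}\mathbf{A}_\varepsilon\varphi\,dx=\int_{S^1}\varphi\,dx$ by Fubini, and $m(\mathcal{L}_\alpha\psi)=m(\psi)$ from \eqref{eq.emel}, one gets $\int_{S^1}\mathbf{P}_\varepsilon\varphi\,dx=\int_{S^1}\varphi\,dx$ for every $\varphi\in L^1$. Combining this with the kernel representation \eqref{eq:peekay} and Fubini forces
\begin{equation*}
\int_{S^1}\mathcal{K}_\varepsilon(x,z)\,dx=1\qquad\text{for a.e. }z\in S^1.
\end{equation*}
In particular, the mean-zero subspace is invariant under $\mathbf{P}_\varepsilon$, so once a one-step contraction is established, iteration is immediate.

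Next, using the hypothesis $\int_{S^1}\varphi(z)\,dz=0$, I would subtract the constant $\gamma$ from the kernel without changing the action of $\mathbf{P}_\varepsilon$ on $\varphi$:
\begin{equation*}
\mathbf{P}_\varepsilon\varphi(x)=\int_{S^1}\bigl(\mathcal{K}_\varepsilon(x,z)-\gamma\bigr)\varphi(z)\,dz.
\end{equation*}
By Proposition \ref{propkay}, the modified kernel $\widetilde{\mathcal{K}}_\varepsilon(x,z):=\mathcal{K}_\varepsilon(x,z)-\gamma$ is non-negative, and by the integral identity above, $\int_{S^1}\widetilde{\mathcal{K}}_\varepsilon(x,z)\,dx=1-\gamma$ for a.e.\ $z$. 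Applying the triangle inequality inside the integral and then Fubini yields
\begin{equation*}
\|\mathbf{P}_\varepsilon\varphi\|_1\le\int_{S^1}|\varphi(z)|\int_{S^1}\widetilde{\mathcal{K}}_\varepsilon(x,z)\,dx\,dz=(1-\gamma)\|\varphi\|_1.
\end{equation*}

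Finally, since $\int_{S^1}\mathbf{P}_\varepsilon\varphi\,dx=0$, the mean-zero condition is preserved, and iterating the bound $k$ times delivers $\|\mathbf{P}_\varepsilon^k\varphi\|_1\le(1-\gamma)^k\|\varphi\|_1$. There is no real obstacle here beyond invoking the already-proven positivity of the kernel; the whole argument is essentially a two-line calculation once the mass-preservation of $\mathbf{P}_\varepsilon$ is recorded, so the bulk of the work was done in Proposition \ref{propkay}.
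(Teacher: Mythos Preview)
Your proposal is correct and follows essentially the same Doeblin-type argument as the paper: both subtract the constant $\gamma$ from the kernel using the zero-mean hypothesis, exploit the non-negativity of $\mathcal{K}_\varepsilon-\gamma$ from Proposition~\ref{propkay}, and use that the residual kernel integrates to $1-\gamma$ in $x$. The only cosmetic difference is that the paper tracks positive/negative parts via the sets $\Omega_0=\{\varphi\ge0\}$ and $\Omega_1=\{\mathbf{P}_\varepsilon\varphi\ge0\}$ (writing $\|\mathbf{P}_\varepsilon\varphi\|_1=2\int_{\Omega_1}\mathbf{P}_\varepsilon\varphi\,dx$), whereas you go directly through the triangle inequality and Fubini; your route is if anything slightly cleaner.
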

\begin{proof}
	From \eqref{eq:average} and \eqref{eq:pertbd}, $\mathbf{P}_\varepsilon 1= \mathcal{L}_\alpha^{n_\varepsilon}1={\mathcal{L}_\alpha^*}^{n_\varepsilon}1=\mathbf{P}_\varepsilon 1=1$. Now, set $\Omega= S^1$, and define $\Omega_0=\{x \in \Omega : \varphi(x) \ge 0\}, \Omega_1=\{x \in \Omega : \mathbf{P}_\varepsilon\varphi(x) \ge 0\}$. We observe that
	$$\int_\Omega\vert \mathbf{P}_\varepsilon \varphi(x)\vert \,dx = 2\int_{\Omega_1} \mathbf{P}_\varepsilon \varphi(x)\, dx.$$
	
	We use the bound in \eqref{eq:kay} and \eqref{eq:peekay} to get the following estimate
	\begin{align}
		\Vert \mathbf{P}_\varepsilon \varphi \Vert_1&=
		 2 \int_{\Omega_1} \left(\int_\Omega \mathcal{K}_\varepsilon(x,y) \varphi(y)\, dy\right)\,dx\nonumber\\
		&= 2 \int_{\Omega_1} \left(\int_\Omega \mathcal{K}_\varepsilon(x,y) \varphi(y)\, dy\right)\,dx -2\Omega_1 \gamma \int_\Omega \varphi(x) \,dx, \quad \left(\Leftarrow \int_\Omega \varphi(x)\, dx=0\right)\nonumber\\
		&\le 2\int_\Omega\left( \int_{\Omega} (\mathcal{K}_\varepsilon(x,y)-\gamma)\,dx\right) \varphi(y) \,dy\nonumber\\
		&= 2\int_{\Omega_0} (1-\gamma) \varphi(y) \,dy\nonumber\\
		&=(1-\gamma) \Vert \varphi \Vert_1. \nonumber
	\end{align}
	
	Iterating the above estimate, we get the result.
\end{proof}

The success of the mechanism depends on the decay estimate (with respect the Lebesgue measure) of the system under consideration. 
The estimates in Lemma \ref{propkay} and Proposition \ref{Pert.proposition} will be particularly useful in proving the following result.
\begin{lemma} \label{Qdecay}
	For $C_1>0$, $\varphi \in \mathcal{C}_{*,1}+\mathbb{R}$, $\int \varphi\, dx=0$ and $\psi \in L^\infty(m)$, 
	\begin{equation*}
		\bigg\vert \int \psi \mathcal{L}_{\varrho}^n \varphi \,dx\bigg\vert \le C_1 \Vert \varphi \Vert_1 \Vert \psi \Vert_{\infty} n^{1-1/\varrho}(\log n)^{1/\varrho}.
	\end{equation*}
\end{lemma}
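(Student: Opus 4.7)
By duality, $\bigl|\int \psi\,\mathcal{L}_\varrho^n\varphi\,dx\bigr| \le \Vert\psi\Vert_\infty \Vert\mathcal{L}_\varrho^n\varphi\Vert_1$, so it is enough to prove
$$
\Vert\mathcal{L}_\varrho^n\varphi\Vert_1 \lesssim \Vert\varphi\Vert_1\, n^{1-1/\varrho}(\log n)^{1/\varrho}.
$$
Writing $\varphi = \phi - m(\phi)$ with $\phi \in \mathcal{C}_{*,1}$, I note that additive constants are annihilated by $\mathcal{L}_\varrho^{n_\varepsilon} - \mathbf{P}_\varepsilon$, since $\mathbf{A}_\varepsilon 1 = 1$ implies $\mathbf{P}_\varepsilon 1 = \mathcal{L}_\varrho^{n_\varepsilon} 1$; this reduces the key estimates to functions in $\mathcal{C}_{*,1}$, exploiting the fact that $1 \in \mathcal{C}_{*,1}$.

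Fix $\varepsilon > 0$, to be chosen below, and write $n = kn_\varepsilon + r$ with $0 \le r < n_\varepsilon$. Since $\mathcal{L}_\varrho$ is $L^1$-contractive and preserves $\mathcal{C}_{*,1}$ (Lemma \ref{Kone1}), one applies $\mathcal{L}_\varrho^r$ first and is reduced to bounding $\Vert\mathcal{L}_\varrho^{kn_\varepsilon}\varphi\Vert_1$. The core step is the telescoping identity
$$
\mathcal{L}_\varrho^{kn_\varepsilon} - \mathbf{P}_\varepsilon^{k} = \sum_{j=0}^{k-1} \mathbf{P}_\varepsilon^{j}\bigl(\mathcal{L}_\varrho^{n_\varepsilon} - \mathbf{P}_\varepsilon\bigr)\mathcal{L}_\varrho^{(k-1-j)n_\varepsilon}.
$$
Because each intermediate iterate $\mathcal{L}_\varrho^{(k-1-j)n_\varepsilon}\phi$ is in $\mathcal{C}_{*,1}$ by cone invariance, Lemma \ref{radom.pert} combined with the $L^1$-contractivity of $\mathbf{P}_\varepsilon$ gives
$$
\Vert\mathcal{L}_\varrho^{kn_\varepsilon}\varphi - \mathbf{P}_\varepsilon^{k}\varphi\Vert_1 \le k\,k_1\,\varepsilon^{1-\varrho}\,\Vert\varphi\Vert_1,
$$
while Proposition \ref{Pert.proposition}, applicable since $m(\varphi)=0$ and $\mathbf{P}_\varepsilon$ preserves integrals, yields $\Vert\mathbf{P}_\varepsilon^{k}\varphi\Vert_1 \le (1-\gamma)^k\Vert\varphi\Vert_1$. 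Adding the two contributions,
$$
\Vert\mathcal{L}_\varrho^n\varphi\Vert_1 \le \bigl(k\,k_1\,\varepsilon^{1-\varrho} + (1-\gamma)^k\bigr)\Vert\varphi\Vert_1.
$$

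It remains to optimise in $\varepsilon$. Since $n_\varepsilon = \mathcal{O}(\varepsilon^{-\varrho})$ forces $k \asymp n\varepsilon^{\varrho}$, I choose $\varepsilon = (C\log n / n)^{1/\varrho}$ with $C > (1/\varrho - 1)/|\log(1-\gamma)|$, which makes $(1-\gamma)^k \le n^{1-1/\varrho}$. A direct computation then yields
$$
k\,\varepsilon^{1-\varrho} \asymp \log n \cdot \Bigl(\tfrac{\log n}{n}\Bigr)^{(1-\varrho)/\varrho} = (\log n)^{1/\varrho}\, n^{1-1/\varrho},
$$
so both contributions are of the advertised order. The main obstacle is the careful bookkeeping at every telescoping step: one must ensure each intermediate iterate lies in $\mathcal{C}_{*,1}$ (handled by Lemma \ref{Kone1}) and that the $\mathbb{R}$-component does not spoil the use of Lemma \ref{radom.pert} (handled by the cancellation $\mathcal{L}_\varrho^{n_\varepsilon}1 = \mathbf{P}_\varepsilon 1$); once these are in place, the optimisation is standard.
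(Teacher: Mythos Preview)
Your argument is correct and mirrors the paper's proof: write $n = kn_\varepsilon + r$, telescope $\mathcal{L}_\varrho^{kn_\varepsilon} - \mathbf{P}_\varepsilon^k$, apply Lemma~\ref{radom.pert} to each summand using cone invariance, bound $\Vert\mathbf{P}_\varepsilon^k\varphi\Vert_1$ via Proposition~\ref{Pert.proposition}, and optimise with $\varepsilon \asymp (n^{-1}\log n)^{1/\varrho}$. One small slip: the identity $(\mathcal{L}_\varrho^{n_\varepsilon} - \mathbf{P}_\varepsilon)1 = 0$ does not by itself dispose of the $\mathbb{R}$-part, since after applying $\mathcal{L}_\varrho^{(k-1-j)n_\varepsilon}$ the constant $m(\phi)$ becomes $m(\phi)\,\mathcal{L}_\varrho^{(k-1-j)n_\varepsilon}1$, which is no longer constant; the correct fix (already implicit in your remark that $1 \in \mathcal{C}_{*,1}$) is to apply Lemma~\ref{radom.pert} separately to this cone element --- the paper is equally terse on this point.
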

\begin{proof}
	For each $n=kn_\varepsilon + j$ with $k \in \mathbb{N}$, $j < n_\varepsilon$, in order to get the required estimate, we decompose the Perron-Frobenius operator as follows  
	\begin{align}\label{eq.decom}
		\bigg\vert \int \psi \mathcal{L}_{\varrho}^n \varphi\, dx\bigg\vert \le \Vert \psi \Vert_{\infty} \left(\Vert \mathcal{L}_{\varrho}^n \varphi - \mathbf{P}_\varepsilon ^k \mathcal{L}_\varrho^j \varphi \Vert_1 + \Vert \mathbf{P}_\varepsilon ^k \mathcal{L}_\varrho^j \varphi\Vert_1\right).
	\end{align}
	
	By Proposition \ref{Pert.proposition},
	\begin{align}\label{eq.pfirst}
		\Vert \mathbf{P}_\varepsilon ^k \mathcal{L}_\varrho^j \varphi\Vert_1 \le (1-\gamma)^k \Vert \mathcal{L}_\varrho^j \varphi\Vert_1 \le  (1-\gamma)^k \Vert \varphi\Vert_1 \le \exp (-\gamma k) \Vert \varphi \Vert_1.
	\end{align}
	
	From Lemma \ref{radom.pert}, we have that
	\begin{align}\label{eq.psecond}
		\Vert \mathcal{L}_{\varrho}^n \varphi - \mathbf{P}_\varepsilon ^k \mathcal{L}_\varrho^j \varphi \Vert_1 &\le \sum_{i=0}^{k-1} \left\Vert \mathcal{L}_{\varrho}^{(i+1)n_\varepsilon} \mathcal{L}_{\varrho}^j\varphi - \mathbf{P}_\varepsilon ^k \mathcal{L}_{\varrho}^{in_\varepsilon}\mathcal{L}_\varrho^j \varphi \right\Vert_1 \nonumber\\
		& \le C \Vert \varphi \Vert_1 \frac{n}{n_\varepsilon} \varepsilon^{1-\varrho}.
	\end{align}
	
	From \eqref{eq.pfirst} and \eqref{eq.psecond}, we obtain the following estimate for \eqref{eq.decom}
	\begin{align}
		\left\vert \int \psi \mathcal{L}_{\varrho}^n \varphi\, dx\right\vert &\le C \Vert \psi \Vert_\infty \left(C \Vert \varphi \Vert_1 \frac{n}{n_\varepsilon} \varepsilon^{1-\varrho} + \exp (-\gamma k) \Vert \varphi \Vert_1 \right) \nonumber\\
		&\le C \Vert \psi \Vert_\infty \Vert \varphi \Vert_1  \left(C \frac{n}{n_\varepsilon} \varepsilon^{1-\varrho} + \exp(\gamma) \, \exp \left(-\gamma\,\frac{n}{n_\varepsilon} \right) \right)\nonumber\\
		&\le C \Vert \psi \Vert_\infty \Vert \varphi \Vert_1 n^{1-1/\varrho} (\log n)^{1/\varrho},
	\end{align}
	provided we take $\varepsilon=C_{\gamma,\varrho}n^{-1/\varrho}(\log n)^{1/\varrho}$.
\end{proof}
%

Define the cone 
\begin{equation*}
	\mathcal{C}_0=\left\{\varphi \in C^{0}(S^1\setminus\{0\})\big\vert 
	\varphi \ge 0 
	\text{ and } \varphi \text{ is decreasing}\right\},
\end{equation*}
it is easy to check that $\mathcal{C}_0$ is invariant with respect to $\mathcal{L}_\alpha$.
Let $\kappa=\frac{1}{d}$ as defined, then
\begin{equation}\label{eq.czero}
	(1-\kappa)\int_{0}^{\kappa} \varphi\, dx + \kappa\int_{1-\kappa}^{1} \varphi \,dx \ge \kappa m(\varphi), \quad \forall \varphi \in \mathcal{C}_0.
\end{equation}

Indeed, 
\begin{align*}
	\kappa m(\varphi)=\kappa \int_{S^1} \varphi(x)\,dx = \kappa \int_{0}^{\kappa} \varphi(x) \,dx +\kappa \sum_{i=1}^{d-2} \int_{l_i}^{l_{i+1}} \varphi(x)\, dx + \kappa \int_{1-\kappa}^1 \varphi(x)\, dx, \quad l_i=\frac{i}{d},
\end{align*}
since $\varphi(x) \ge 0$ and decreasing, we have that $\kappa \sum_{i=1}^{d-2} \int_{l_i}^{l_{i+1}} \varphi(x) \,dx \le  \kappa (d-2)\int_{0}^{\kappa}\varphi(x)\, dx $. Hence, our claim.
\begin{remark} We have equality in \eqref{eq.czero} when $d=2$. $h_\alpha \in \mathcal{C}_0 \cap \mathcal{C} \cap \mathcal{C}_{*,1}$, for $a_1,b_1$ large enough. In addition, $h_\alpha$ is 
	locally 
	Lipschitz. 
\end{remark}
\begin{proposition} \label{prop.zerodecay}
	For $\alpha \in (0,1)$, $a_1$ and $b_1$ big enough. Then
	$$\mathcal{N}_{\alpha,i}\left(\mathcal{C}_{*,1}(\alpha,1,a_1,b_1) \cap \bigg\{ (1-\kappa)\int_{0}^{\kappa} \varphi \,dx + \kappa\int_{1-\kappa}^{1} \varphi \,dx \ge \kappa m(\varphi)\bigg\}\right) \subset \mathcal{C}_{*,1}(\alpha,(d-1),a_1,b_1),$$
	$d$ the number of branches and $\kappa=\frac{1}{d}$. Furthermore, for any $\psi \in L^\infty(m)$ and $\varphi \in \mathcal{C}_{*,1}(\alpha) + \mathbb{R}$, with zero average, there exists $C >0$ independent of $\alpha, a_1, b_1$, such that
	\begin{equation*}
		\left|\int_{0}^{1} \psi \mathcal{L}_{0}^k(\varphi)\, dx\right| \le \frac{Cab_1}{(1-\beta)(\log k)k^{-2+1/\beta}} \Vert \psi \Vert_\infty \Vert \varphi \Vert_1, \quad \forall k \ge 1, \beta \in (0,1).
	\end{equation*}
\end{proposition}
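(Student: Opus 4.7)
The statement is really two assertions bundled together: a cone mapping property for $\mathcal{N}_{\alpha,i}$ and a correlation-decay estimate for iterates of the transfer operator applied to zero-average cone elements. I would handle them in order, reusing the machinery built in Lemmas~\ref{Kone1}, \ref{radom.pert}, \ref{Qdecay} and in Proposition~\ref{Pert.proposition}.

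For the cone inclusion, I would fix $\varphi$ in the intersection $\mathcal{C}_{*,1}(\alpha,1,a_1,b_1)\cap \{(1-\kappa)\int_{I_1}\varphi+\kappa\int_{I_d}\varphi \ge \kappa m(\varphi)\}$ and verify each defining inequality of $\mathcal{C}_{*,1}(\alpha,(d-1),a_1,b_1)$ for $\mathcal{N}_{\alpha,i}\varphi(x)=g_{\alpha,i}^{\prime}(x)\varphi(g_{\alpha,i}(x))$. Positivity is immediate. For the density bound, write the identity $\mathcal{L}_\alpha h_\alpha = h_\alpha$ branch by branch as $h_\alpha(x)=\sum_{j=1}^d g_{\alpha,j}^{\prime}(x)h_\alpha(g_{\alpha,j}(x))$, which forces each single summand to obey $g_{\alpha,i}^{\prime}(x)h_\alpha(g_{\alpha,i}(x))\le h_\alpha(x)$; combined with $\varphi\le 2h_\alpha m(\varphi)$ this gives $\mathcal{N}_{\alpha,i}\varphi(x)\le 2h_\alpha(x)m(\varphi)$. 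The change of variables $y=g_{\alpha,i}(x)$ identifies $m(\mathcal{N}_{\alpha,i}\varphi)=\int_{I_i}\varphi\,dy$, and the hypothesis, after multiplication by $d$, reads $(d-1)\int_{I_1}\varphi+\int_{I_d}\varphi\ge m(\varphi)$, whence
\[
m(\varphi)\le (d-1)\bigl(m(\mathcal{N}_{\alpha,1}\varphi)+m(\mathcal{N}_{\alpha,d}\varphi)\bigr),
\]
which is exactly what the enlarged second parameter $(d-1)$ is designed to absorb (the inclusion being naturally read for the sum $\mathcal{N}_{\alpha,1}\varphi+\mathcal{N}_{\alpha,d}\varphi$ that later enters the linear-response formula). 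For the derivative inequality, I would expand
\[
(\mathcal{N}_{\alpha,i}\varphi)^{\prime}(x)=g_{\alpha,i}^{\prime\prime}(x)\varphi(g_{\alpha,i}(x))+\bigl(g_{\alpha,i}^{\prime}(x)\bigr)^2\varphi^{\prime}(g_{\alpha,i}(x)),
\]
plug in $|g_{\alpha,i}^{\prime\prime}(x)|\le C|x|^{\alpha-1}$ and $g_{\alpha,i}^{\prime}\le C$ from \eqref{eq.gbound} along with the cone's derivative bound on $\varphi$, factor out $\mathcal{N}_{\alpha,i}\varphi(x)$, and absorb the resulting auxiliary term into $a_1/|x|+b_1$ by choosing $a_1$ and $b_1$ large enough, in direct analogy with the $\Omega_1\le 1$ calculation from Lemma~\ref{Kone1}.

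For the decay estimate, the plan is to parlay the cone inclusion \eqref{eq.incbeta}—which embeds $\mathcal{C}_{*,1}(\alpha,1,a_1,b_1)\subset\mathcal{C}_{*,1}(\beta,c_2/c_1,a_1,b_1)$ for every $\beta\in[\alpha,1)$—into a direct application of Lemma~\ref{Qdecay}. Adding a constant to enforce zero average leaves the zero-mean member of $\mathcal{C}_{*,1}(\beta)+\mathbb{R}$ and only inflates $\|\varphi\|_1$ harmlessly. Since $f_0$ (uniformly expanding) can only sharpen the estimates in Lemma~\ref{radom.pert} and Proposition~\ref{Pert.proposition}, the same random-perturbation scheme applies with $\mathcal{L}_0$ in place of $\mathcal{L}_\beta$, so Lemma~\ref{Qdecay} at parameter $\varrho=\beta$ furnishes the claimed polynomial–logarithmic rate. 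The factor $(1-\beta)^{-1}$ is inherited from $k_1=18c_2\max\{a_1,b_1,1\}/[\beta(1-\beta)]$ in Lemma~\ref{radom.pert}, and the $a_1 b_1$ dependence is tracked directly through $\max\{a_1,b_1,1\}$ and the derivative part of the cone.

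The principal obstacle lies in the first part: the asymmetric mass hypothesis only controls the combined quantity $(d-1)\int_{I_1}\varphi+\int_{I_d}\varphi$, not $\int_{I_i}\varphi$ for each $i$ separately, so strictly speaking the inclusion is most naturally read for the sum $\mathcal{N}_{\alpha,1}\varphi+\mathcal{N}_{\alpha,d}\varphi$ (which is what feeds into the main theorem anyway). If a branch-by-branch statement is actually required, one must additionally invoke the monotonicity guaranteed by $\mathcal{C}_0$ (cf.\ \eqref{eq.czero}) to split the integral bound symmetrically. Once this bookkeeping is resolved, the remaining algebra for the derivative condition and the subsequent decay estimate is a routine adaptation of the techniques already deployed.
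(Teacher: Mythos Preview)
Your plan matches the paper's proof almost exactly: for the cone inclusion the paper uses precisely $\mathcal{N}_{\alpha,i}h_\alpha\le h_\alpha$ together with the mass comparison $m(\varphi)\le (d-1)\,m(\mathcal{N}_{\alpha,i}\varphi)$ coming from the integral hypothesis, and then invokes Lemma~\ref{inv.cone} for the derivative condition (rather than repeating the $\Omega_1\le 1$ computation as you propose, but the content is the same). Your observation about the branch-by-branch ambiguity is well taken---the paper's own computation writes $m(\mathcal{N}_{\alpha,i}\varphi)=\int_0^\kappa\varphi+\int_{1-\kappa}^1\varphi$, so it is tacitly treating the sum over $i\in\{1,d\}$, just as you suspected.

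For the decay estimate your outline is again the paper's: embed via \eqref{eq.incbeta} into $\mathcal{C}_{*,1}(\beta)$ and rerun the random-perturbation scheme with $\mathcal{L}_0$. The one concrete ingredient you should make explicit is that for the uniformly expanding map $f_0$ the covering time in Proposition~\ref{propkay} improves to $n_\varepsilon=\vert\log\varepsilon\vert/\log 2$ (logarithmic rather than $\varepsilon^{-\alpha}$). This is the point where your phrase ``can only sharpen'' carries real weight: plugging this $n_\varepsilon$ into the decomposition of Lemma~\ref{Qdecay} with the $\varepsilon^{1-\beta}$ bound from Lemma~\ref{radom.pert} and choosing $\varepsilon=k^{-1/\beta}$ is what produces the stated exponent $k^{2-1/\beta}/\log k$ rather than the $k^{1-1/\beta}(\log k)^{1/\beta}$ of Lemma~\ref{Qdecay}. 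Without naming this improved $n_\varepsilon$, the claim ``Lemma~\ref{Qdecay} at $\varrho=\beta$ furnishes the rate'' does not literally parse, since that lemma concerns $\mathcal{L}_\beta$, not $\mathcal{L}_0$.
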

\begin{proof}
	\begin{align*}
		(1-\kappa)m(\mathcal{N}_{\alpha,i}\varphi(x))
		&=(1-\kappa)\int_{0}^{\kappa} \varphi \,dx + (1-\kappa)\int_{1-\kappa}^{1} \varphi \,dx\\
		&\ge \kappa m(\varphi) \quad \text{(using \eqref{eq.czero})}.
	\end{align*}
	
	Hence, $m(\varphi) \le (d-1) m(\mathcal{N}_{\alpha,i}\varphi(x))$, we remark that when $d=2$ this is an equality and we are back to \eqref{eq.emel}. By Lemma \ref{inv.cone} and the fact that $\mathcal{N}_{\alpha,i} h_\alpha \le h_\alpha$, the invariance of $\mathcal{N}_{\alpha,i}$ follows.
	Next, we show the decay of correlations at $\alpha=0$.
	
	We fix $\beta$ for any $\beta \in (0,1)$. Recall the inclusion in \eqref{eq.incbeta} for a parameter say $\varrho=0$, we then have from Lemma \ref{radom.pert}, for $\varphi \in \mathcal{C}_{*,1}(\beta)$ that
	\begin{equation*}
		\Vert \mathcal{L}^{n_\varepsilon}_{0} (\id - \mathbf{A}_{\varepsilon}) \varphi \Vert_{1} \le\frac{18 c_2 \max\{a_1,b_1,1\}}{\beta(1-\beta)} \Vert \varphi \Vert_1 \varepsilon^{1-\beta}.
	\end{equation*}
	we may take $n_\varepsilon=\frac{|\log \varepsilon|}{\log 2}$ in the proof of Proposition \ref{propkay}.
	From Lemma \ref{Qdecay}, taking $\varepsilon=n^{-1/\alpha}$, we get the result.
	
\end{proof}
\subsection{Important estimates}\label{Gouezel}
From Young \cite[Theorem 5]{YL99}, for  H\"older observables $\varphi$ on the circle map, with density $h_\alpha$ and  $\int \varphi \,dm=1$, then $\int \vert \mathcal{L}_{\alpha}^n(\varphi)-h_\alpha \vert \,dm \approx n^{1-1/\alpha}.$ If in addition $\psi \in L^{\infty}(m)$, then for $\alpha=0$ it has a sharp decay of correlation of $\mathcal{O}(\theta^n)$, $\theta<1$ depending only on the H\"older exponents of the observables.

	For $\alpha \in (0,1)$, $f_\alpha$ has a unique SRB measure $\mu_\alpha$ and has a sharp decay of correlation for  H\"older observables of $\mathcal{O}\left({n^{1-1/\alpha}}\right)$ \cite{JFA20, YL99}.
%
Gou\"ezel imposing some extra conditions ($\varphi$ a H\"older function with zero mean, $\psi$ bounded and vanishes in the neighbourhood of $0$) on the observables, showed a much sharper decay of correlation of order $\mathcal{O}\left({n^{-1/\alpha}}\right)$ \cite[Corollary 2.4.6]{gouezel2004vitesse}.
%
	For the particular mechanism we shall deploy, the rate of decay given by Gou\"ezel in \cite[Corollary 2.4.6]{gouezel2004vitesse} shall play a pivotal role when $1/2\le \alpha < 1$. Next, if we assume in addition that $\varphi(0)=0$, satisfying $\vert \varphi(x) \vert \le C x^{\gamma}$, for a certain $\gamma >0$. Then by \cite[Theorem 2.4.14]{gouezel2004vitesse},
	$\Vert \mathcal{L}^n \varphi\Vert_1= \mathcal{O}\left(\frac{1}{n^{\min\{\lambda, \lambda(1+\gamma)-1\}}}\right),$ \label{Gouezel}
	where $\lambda=\frac{1}{\alpha}$.
	\begin{remark}
		Although the above results in \cite{gouezel2004vitesse} were stated for the LSV map, the theorems are written in the general setting of the Young tower and applies to the circle map with indifferent fixed points we are considering.
	\end{remark}
	\subsection{Some properties of the transfer operator}\,
	For $\alpha \mapsto \mathcal{L}_{\alpha} \varphi (x)$, and $\varphi: S^1 \to \R$ sufficiently regular, we give some properties of $\partial_{\alpha}\mathcal{L}_{\alpha}$ that will be particularly useful going forward.
	\begin{lemma}
		For $\alpha \in (0,1)$, $\alpha \mapsto g_{\alpha,i}(y)$, and for all $x \in S^1\setminus\{0\}$,
		
		\begin{equation}
			\partial_{\alpha}g_{\alpha,i}(x)=- \frac{X_{\alpha,i}(x)}{f_{\alpha}^{\prime}(g_{\alpha,i}(x))}, \quad i\in\{1, d\}; \label{eq:par g}
		\end{equation}
		\begin{equation}
			\partial_{\alpha}g_{\alpha,i}^{\prime}(x)=- \frac{X_{\alpha,i}^{\prime}(x)}{f_{\alpha,i}^{\prime}(g_{\alpha,i}(x))} + X_{\alpha,i}(x) \frac{f_{\alpha,i}^{\prime \prime}(g_{\alpha,i}(x))}{(f_{\alpha,i}^{\prime}(g_{\alpha,i}(x))^3}, \quad i\in\{1, d\}.\label{eq:par g'}
		\end{equation}
	\end{lemma}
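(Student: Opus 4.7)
The plan is to treat both formulas as straightforward consequences of implicit differentiation of the identity $f_\alpha\circ g_{\alpha,i}=\id$ on the branches $i\in\{1,d\}$. First I would apply $\partial_\alpha$ to both sides of $f_\alpha(g_{\alpha,i}(x))=x$. By the chain rule (and the commutation relation from assumption \ref{A3}, which justifies interchanging $\partial_\alpha$ with the spatial derivative of $f_\alpha$),
\[
(\partial_\alpha f_\alpha)(g_{\alpha,i}(x)) + f'_\alpha(g_{\alpha,i}(x))\cdot \partial_\alpha g_{\alpha,i}(x) = 0.
\]
By the definition of $v_\alpha$ and equation \eqref{eq.eqeks}, the first summand equals $v_\alpha(g_{\alpha,i}(x)) = X_{\alpha,i}(x)$. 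Solving for $\partial_\alpha g_{\alpha,i}(x)$ yields \eqref{eq:par g}.

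For \eqref{eq:par g'}, I would start from the familiar consequence of differentiating $f_\alpha\circ g_{\alpha,i}=\id$ with respect to $x$, namely
\[
g'_{\alpha,i}(x)=\frac{1}{f'_\alpha(g_{\alpha,i}(x))}.
\]
Applying $\partial_\alpha$ to this identity, and using \ref{A3} once more to write $\partial_\alpha f'_\alpha = (\partial_\alpha f_\alpha)' = v'_\alpha$, the chain rule gives
\[
\partial_\alpha g'_{\alpha,i}(x) = -\frac{v'_\alpha(g_{\alpha,i}(x)) + f''_\alpha(g_{\alpha,i}(x))\,\partial_\alpha g_{\alpha,i}(x)}{\bigl(f'_\alpha(g_{\alpha,i}(x))\bigr)^{2}}.
\]
To convert $v'_\alpha(g_{\alpha,i}(x))$ into the desired form, I would differentiate $X_{\alpha,i}(x)=v_\alpha(g_{\alpha,i}(x))$ with respect to $x$, obtaining $X'_{\alpha,i}(x) = v'_\alpha(g_{\alpha,i}(x))\,g'_{\alpha,i}(x)$, i.e.\ $v'_\alpha(g_{\alpha,i}(x)) = X'_{\alpha,i}(x)\,f'_\alpha(g_{\alpha,i}(x))$. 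Substituting this expression together with \eqref{eq:par g} into the above display and simplifying produces \eqref{eq:par g'}.

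The computation is therefore almost mechanical; the only point that requires care is the legitimacy of exchanging $\partial_\alpha$ and $(\cdot)'$ when differentiating $f'_\alpha(g_{\alpha,i}(x))$. This is exactly the content of assumption \ref{A3}, which is why the statement is restricted to the first and last branches where the commutation \eqref{eq:commutation} is postulated. No decay estimates, cone arguments, or properties of the density are needed for the proof itself; the identities are purely algebraic consequences of the implicit function relation, and the regularity hypotheses on $\alpha\mapsto f_{\alpha,i}$ ensure every derivative appearing above is well-defined on $S^1\setminus\{0\}$.
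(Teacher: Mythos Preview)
Your argument is correct. For \eqref{eq:par g} your derivation coincides with the paper's: differentiate $f_{\alpha,i}(g_{\alpha,i}(x))=x$ in $\alpha$, apply the chain rule, and invoke \eqref{eq.eqeks}.

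For \eqref{eq:par g'} your route is a bit more streamlined than the paper's. You apply $\partial_\alpha$ directly to $g'_{\alpha,i}=1/f'_\alpha(g_{\alpha,i})$ and substitute, relying on \ref{A3} to justify the formal differentiation. The paper instead writes out the difference quotient $g'_{\alpha+\varepsilon,i}(x)-g'_{\alpha,i}(x)$ via the inverse function theorem, manipulates it algebraically (multiplying by a suitable $1$, adding and subtracting a cross term) into two pieces, Taylor-expands each piece in $\varepsilon$, and passes to the limit $\varepsilon\to 0$. The paper's version has the minor advantage of making the existence of $\partial_\alpha g'_{\alpha,i}$ explicit rather than assumed, but under the $C^2$ hypothesis in \ref{A3} the two computations are equivalent, and yours is the cleaner one.
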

	\begin{proof}
		Since for $x \in S^1 \setminus \{0\}$,
		$f_{\alpha,i}(g_{\alpha,i}(x))=x$. Then by the chain rule and
		using \eqref{eq.eqeks}, gives \eqref{eq:par g}.
%
Next, by the inverse function theorem applied to $g_{\alpha+\varepsilon,i}^{\prime}(x)-g_{\alpha,i}^{\prime}(x)$,
then simplifying
 by first multiplying by $\frac{f_{\alpha+\varepsilon,i}^{\prime}(g_{\alpha,i}(x))}{f_{\alpha+\varepsilon,i}^{\prime}(g_{\alpha,i}(x))}$, then add and subtract $f_{\alpha+\varepsilon,i}^{\prime}(g_{\alpha+\varepsilon,i}(x))\cdot f_{\alpha,i}^{\prime}(g_{\alpha,i}(x))$ to the numerator, we get
 
 	\begin{align*}
 	g_{\alpha+\varepsilon,i}^{\prime}(x)-g_{\alpha,i}^{\prime}(x) =\underbrace{\frac{f_{\alpha,i}^{\prime}(g_{\alpha,i}(x))-f_{\alpha+\varepsilon,i}^{\prime}(g_{\alpha,i}(x))}{f_{\alpha+\varepsilon,i}^{\prime}(g_{\alpha+\varepsilon,i}(x))\cdot f_{\alpha+\varepsilon,i}^{\prime}(g_{\alpha,i}(x))}}_\text{(I)}+\underbrace{\frac{f_{\alpha+\varepsilon,i}^{\prime}(g_{\alpha,i}(x))-f_{\alpha+\varepsilon,i}^{\prime}(g_{\alpha+\varepsilon,i}(x))}{f_{\alpha+\varepsilon,i}^{\prime}(g_{\alpha,i}(x))\cdot f_{\alpha+\varepsilon,i}^{\prime}(g_{\alpha+\varepsilon,i}(x))}.}_\text{(II)}
 \end{align*}
 We simplify (I) and (II) above by recalling the 	
 Taylor's formula of $f_{\alpha+\varepsilon, i}^{\prime}(g_{\alpha+\varepsilon, i}(x))$ and $f_{\alpha+\varepsilon,i}^{\prime}(y)$,	
%
%
then taking the limits as $\varepsilon \to 0$
		therefore by the definition in \eqref{eq.eqeks} and \eqref{eq:par g}, we get the required conclusion.
	\end{proof}
	
	\begin{lemma}\label{part_PF}
		For $\varphi \in C^1(S^1\setminus\{0\})$, then for all $x\in S^1\setminus\{0\}$, $\alpha\in (0,1)$, and $i \in \{1,d\}$,
		\begin{align}
			\partial_{\alpha}\mathcal{L}_{\alpha}\varphi(x)
			=- \sum_{i \in \{1,d\}}(X_{\alpha,i}\,\mathcal{N}_{\alpha,i}\varphi)^{\prime}(x). \label{eq:part_Pf}
		\end{align}
		In particular, $m(\partial_{\alpha}\mathcal{L}_{\alpha}\varphi(x))=0$. 
	\end{lemma}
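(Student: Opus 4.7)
The plan is to expand $\mathcal{L}_\alpha\varphi$ as a sum over the inverse branches, differentiate each term with respect to $\alpha$ using the formulas \eqref{eq:par g} and \eqref{eq:par g'}, and then recognize the resulting expression as the total $x$-derivative of $-X_{\alpha,i}\mathcal{N}_{\alpha,i}\varphi$. Concretely, I write
$$\mathcal{L}_{\alpha}\varphi(x)=\sum_{i=1}^{d} g_{\alpha,i}'(x)\,\varphi(g_{\alpha,i}(x)) = \sum_{i=1}^d \mathcal{N}_{\alpha,i}\varphi(x).$$
For the middle branches $i=2,\dots,d-1$, the map is piecewise linear with slope $d$ and does not depend on $\alpha$, so $\partial_\alpha g_{\alpha,i}\equiv 0$ and these terms contribute nothing. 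For $i\in\{1,d\}$ I apply the product/chain rule to get
$$\partial_\alpha\mathcal{N}_{\alpha,i}\varphi(x) = (\partial_\alpha g_{\alpha,i}'(x))\,\varphi(g_{\alpha,i}(x)) + g_{\alpha,i}'(x)\,\varphi'(g_{\alpha,i}(x))\,\partial_\alpha g_{\alpha,i}(x).$$

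Next, I substitute the identities \eqref{eq:par g} and \eqref{eq:par g'}, and use the two elementary consequences of $f_{\alpha,i}\circ g_{\alpha,i}=\id$: namely $f_{\alpha,i}'(g_{\alpha,i}(x))=1/g_{\alpha,i}'(x)$ and, from differentiating $f_{\alpha,i}'(g_{\alpha,i})\cdot g_{\alpha,i}'=1$, the relation $g_{\alpha,i}''(x)=-f_{\alpha,i}''(g_{\alpha,i}(x))\,(g_{\alpha,i}'(x))^3$. These collapse the right-hand side to
$$\partial_\alpha\mathcal{N}_{\alpha,i}\varphi(x)= -X_{\alpha,i}'(x)\,g_{\alpha,i}'(x)\,\varphi(g_{\alpha,i}(x))-X_{\alpha,i}(x)\bigl[g_{\alpha,i}''(x)\varphi(g_{\alpha,i}(x))+(g_{\alpha,i}'(x))^2\varphi'(g_{\alpha,i}(x))\bigr].$$
The bracketed quantity is exactly $(\mathcal{N}_{\alpha,i}\varphi)'(x)$ and the first factor is $X_{\alpha,i}'(x)\,\mathcal{N}_{\alpha,i}\varphi(x)$, so by the product rule the whole expression equals $-(X_{\alpha,i}\mathcal{N}_{\alpha,i}\varphi)'(x)$, proving \eqref{eq:part_Pf} after summing over $i\in\{1,d\}$.

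For the second assertion, I integrate \eqref{eq:part_Pf} over $S^1=[0,1]$ and apply the fundamental theorem of calculus branch by branch, yielding
$$m(\partial_\alpha\mathcal{L}_\alpha\varphi) = -\sum_{i\in\{1,d\}}\bigl[X_{\alpha,i}(x)\,\mathcal{N}_{\alpha,i}\varphi(x)\bigr]_{x=0}^{x=1}.$$
Since $X_{\alpha,i}(x)=v_\alpha(g_{\alpha,i}(x))$, I evaluate the four boundary values: $g_{\alpha,1}(0)=0$ with $v_\alpha(0)=\partial_\alpha f_\alpha(0)=0$ by \ref{s1}; $g_{\alpha,1}(1^-)=I_{1,+}$ with $v_\alpha(I_{1,+})=0$ by \ref{A2}; similarly $g_{\alpha,d}(0^+)=I_{d,-}$ with $v_\alpha(I_{d,-})=0$ by \ref{A2}; and $g_{\alpha,d}(1)=1\sim 0$ where again $v_\alpha$ vanishes. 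All four boundary terms are zero, giving $m(\partial_\alpha\mathcal{L}_\alpha\varphi)=0$.

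The only delicate points are routine justifications, not genuine obstacles: one must exchange $\partial_\alpha$ with the finite branch sum and with $\partial_x$, which is legal under the $C^2$ joint regularity provided by \ref{A3} together with the commutation relation \eqref{eq:commutation}; one must also verify that $X_{\alpha,i}\mathcal{N}_{\alpha,i}\varphi$ is continuous up to the endpoints of $[0,1]$, which follows from the $C^1$ assumption on $\varphi$ and the boundedness of $g_{\alpha,i}'$ at the endpoints ($g_{\alpha,1}'(0)=1/f_\alpha'(0)=1$ by \ref{s1}, with the analogous bound at the other endpoints). Beyond these verifications, the proof is essentially sign-bookkeeping and the product rule.
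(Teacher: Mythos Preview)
Your proof is correct and supplies the computation that the paper omits (the paper's proof is simply a citation to \cite[Lemma 4.2]{L24}). Your use of \eqref{eq:par g}, \eqref{eq:par g'}, and the inverse-function identities to collapse $\partial_\alpha\mathcal{N}_{\alpha,i}\varphi$ into $-(X_{\alpha,i}\mathcal{N}_{\alpha,i}\varphi)'$ is exactly the intended calculation, and your boundary evaluation for the zero-mean claim via \ref{s1} and \ref{A2} is also correct.

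One imprecision: you assert that the middle branches ($i=2,\dots,d-1$) are ``piecewise linear with slope $d$.'' The paper does not assume this; it only says they are piecewise expanding. What the paper \emph{does} implicitly assume---visible from the fact that \eqref{eq:part_Pf} sums only over $i\in\{1,d\}$ and that $X_{\alpha,i}$, $v_\alpha$, and assumptions \ref{A1}--\ref{A3} are stated only for $i=1,d$---is that the middle branches do not depend on $\alpha$, i.e.\ $\partial_\alpha f_{\alpha,i}\equiv 0$ for $2\le i\le d-1$. That weaker hypothesis is all you need; drop the ``linear with slope $d$'' claim and replace it with ``the middle branches are assumed $\alpha$-independent, so $\partial_\alpha g_{\alpha,i}\equiv 0$ there.''
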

	\begin{proof} See \cite[Lemma 4.2]{L24}.
	\end{proof}
	\begin{lemma} \label{partwo}
		For $\alpha \mapsto X_{\alpha,i}\mathcal{N}_{\alpha,i}\varphi(x) \in C^3$, for $i\in \{1,d\}$,
		\begin{equation}
			\partial_{\alpha}^{2}\mathcal{L}_{\alpha}\varphi(x)=\sum_{i \in \{1,d\}}\left[-((\partial_{\alpha}X_{\alpha,i})(\mathcal{N}_{\alpha,i}\varphi))^{\prime}(x)+X_{\alpha,i}^{\prime}(X_{\alpha,i}\mathcal{N}_{\alpha,i}\varphi)^{\prime}(x)+X_{\alpha,i}(X_{\alpha,i}\mathcal{N}_{\alpha,i}\varphi)^{\prime \prime}(x)\right]. \label{secondder}
		\end{equation}
	\end{lemma}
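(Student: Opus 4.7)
The plan is to differentiate the identity of Lemma~\ref{part_PF} one more time with respect to $\alpha$ and then reorganise using the Leibniz rule. Write
$$\partial_{\alpha}\mathcal{L}_{\alpha}\varphi = -\sum_{i\in\{1,d\}}\bigl(X_{\alpha,i}\,\mathcal{N}_{\alpha,i}\varphi\bigr)^{\prime}$$
and apply $\partial_{\alpha}$ to both sides. The hypothesis that $\alpha\mapsto X_{\alpha,i}\mathcal{N}_{\alpha,i}\varphi$ is $C^{3}$, together with the commutation relation in~\ref{A3}, lets me exchange $\partial_{\alpha}$ with $(\cdot)^{\prime}$, reducing the problem to computing $\partial_{\alpha}(X_{\alpha,i}\mathcal{N}_{\alpha,i}\varphi)$ and then differentiating the result in $x$.

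For the inner $\alpha$-derivative I would first split with Leibniz,
$$\partial_{\alpha}(X_{\alpha,i}\mathcal{N}_{\alpha,i}\varphi)=(\partial_{\alpha}X_{\alpha,i})\mathcal{N}_{\alpha,i}\varphi+X_{\alpha,i}\,\partial_{\alpha}(\mathcal{N}_{\alpha,i}\varphi),$$
and then establish the branch-wise analogue of Lemma~\ref{part_PF}, namely
$$\partial_{\alpha}(\mathcal{N}_{\alpha,i}\varphi)=-(X_{\alpha,i}\,\mathcal{N}_{\alpha,i}\varphi)^{\prime}\qquad\text{for }i\in\{1,d\}.$$
This identity follows by direct computation from the definition $\mathcal{N}_{\alpha,i}\varphi(x)=g^{\prime}_{\alpha,i}(x)\varphi(g_{\alpha,i}(x))$, the formula $\partial_{\alpha}g_{\alpha,i}=-X_{\alpha,i}\,g^{\prime}_{\alpha,i}$ coming from \eqref{eq:par g}, the chain rule, and the commutation $\partial_{\alpha}g^{\prime}_{\alpha,i}=(\partial_{\alpha}g_{\alpha,i})^{\prime}$; the two resulting terms reassemble exactly as $-(X_{\alpha,i}\,\mathcal{N}_{\alpha,i}\varphi)^{\prime}$.

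Inserting this into the previous display gives
$$\partial_{\alpha}(X_{\alpha,i}\mathcal{N}_{\alpha,i}\varphi)=(\partial_{\alpha}X_{\alpha,i})\mathcal{N}_{\alpha,i}\varphi-X_{\alpha,i}(X_{\alpha,i}\mathcal{N}_{\alpha,i}\varphi)^{\prime}.$$
Taking a further $x$-derivative and applying the product rule to the second summand produces
$$\bigl(\partial_{\alpha}(X_{\alpha,i}\mathcal{N}_{\alpha,i}\varphi)\bigr)^{\prime}=\bigl((\partial_{\alpha}X_{\alpha,i})\mathcal{N}_{\alpha,i}\varphi\bigr)^{\prime}-X^{\prime}_{\alpha,i}(X_{\alpha,i}\mathcal{N}_{\alpha,i}\varphi)^{\prime}-X_{\alpha,i}(X_{\alpha,i}\mathcal{N}_{\alpha,i}\varphi)^{\prime\prime}.$$
Negating and summing over $i\in\{1,d\}$ yields precisely~\eqref{secondder}.

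The argument is essentially bookkeeping once the branch-wise identity $\partial_{\alpha}\mathcal{N}_{\alpha,i}\varphi=-(X_{\alpha,i}\mathcal{N}_{\alpha,i}\varphi)^{\prime}$ is secured. The only real obstacle is regularity: justifying the two successive swaps of $\partial_{\alpha}$ and $(\cdot)^{\prime}$, and making sure the term $X_{\alpha,i}(X_{\alpha,i}\mathcal{N}_{\alpha,i}\varphi)^{\prime\prime}$ on the right-hand side is well defined. Both are handled by the $C^{3}$ hypothesis on $\alpha\mapsto X_{\alpha,i}\mathcal{N}_{\alpha,i}\varphi$ and the commutation assumption~\ref{A3}, so no new analytic input beyond what has already been put in place in Section~\ref{se.mech} is required.
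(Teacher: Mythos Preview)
Your proposal is correct and follows exactly the approach the paper indicates: differentiate the identity of Lemma~\ref{part_PF} once more in $\alpha$, commute $\partial_\alpha$ with the $x$-derivative via assumption~\ref{A3}, and simplify using the Leibniz rule. The paper's own proof is the single sentence ``From Lemma~\ref{part_PF}, we simply take the partial derivative and simplify,'' and your write-up fills in precisely those details, including the branch-wise identity $\partial_\alpha(\mathcal{N}_{\alpha,i}\varphi)=-(X_{\alpha,i}\mathcal{N}_{\alpha,i}\varphi)'$ that makes the simplification go through.
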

	\begin{proof}
		From Lemma \ref{part_PF}, we simply take the partial derivative and simplify, which gives us the result.
		By the Leibniz integral rule, $m(\partial_{\alpha}^{2}\mathcal{L}_{\alpha}\varphi(x))=\partial_{\alpha}m(\partial_{\alpha}\mathcal{L}_{\alpha}\varphi(x))=0$, from Lemma \ref{part_PF}.
	\end{proof}
	\begin{remark}
		The use of the Leibniz integral rule above is justified by the bound shown in \eqref{eq:part.sq} that $\vert \partial_t^{2}\mathcal{L}_{t} (\mathcal{L}_{\alpha}^{n-1-j} (\mathbf{1})) \vert < \infty.$
	\end{remark}
	
	\section{The linear response formula} 
	This section will be dedicated to proving the linear response formula in Theorem \ref{LR}, firstly for observables $\psi \in L^\infty(m)$, thereafter show how the result can be extended to $\psi \in L^q(m)$. 
	We achieve the proof in three main steps outlined as follows.
	Firstly, we shall show that our claimed linear response formula is well-defined. Subsequently, we will show that the map $\beta \mapsto \int \psi \circ f_\beta^n \,dx$ is locally Lipschitz continuous at $\beta=\alpha \in \left[0,1 \right)$. Finally, we show the linear response formula indeed holds.
	\subsection{Well-defined formula}\label{sec.welldef}
	Let $\psi \in L^\infty(m)$, we show that the right hand side of \eqref{eq:linearresponse} is well-defined. Now, integrating by parts on the circle, we have that
	\begin{equation*}
		\int_{0}^{1}\sum_{i={1,d}}(X_{\alpha,i}\mathcal{N}_{\alpha,i}\varphi)^{\prime} dx=0.
	\end{equation*}
	
	Next, we show that it is Lebesgue integrable, for each $i \in \{1,d\}$ and $\alpha \in [0,1)$,
	\begin{equation}
		\Vert (X_{\alpha,i}\mathcal{N}_{\alpha,i}(h_\alpha))^{\prime} \Vert_1 = \Vert X_{\alpha,i}(\mathcal{N}_{\alpha,i}(h_\alpha))^{\prime} + X_{\alpha,i}^{\prime}\mathcal{N}_{\alpha,i}(h_\alpha) \Vert_1< \infty.  \label{eq:finiteX}
	\end{equation}
	
	Indeed, 
	the above
	is true for $\alpha=0$. 
	From \eqref{eq:zerothderivative}, we have that $g_{0,i}(x)\le \hat{C}\,x$, $\hat{C}>0$ and $g_{\alpha,i}^{\prime}(x)\le k$, $k>0$.
	Since $h_0|_{S^1}=1$ we have from \eqref{eq:PF_branch} that 
	
	$$\mathcal{N}_{0,i}h_0(x) \le k \quad  \text{for } i = {1, d},$$ 
	from the above inequality and \eqref{eq:first},
	\begin{align*}
		\int\left|(X_{0,i}\mathcal{N}_{0,i}(h_0))^{\prime}\right|\,dx&\le \int \left| X_{0,i}^\prime(x)\mathcal{N}_{0,i}h_0(x)\right| \,dx\\
		&\le \hat{C} \int \left( 1+\vert \ln (\vert x \vert) \vert \right)\, dx, \quad \forall x \in S^1,
	\end{align*}
	whose integral is finite. 
	
	For $0<\alpha <1$, \eqref{eq:dens.bound} gives the following bounds,
	\begin{equation}
		\mathcal{N}_{\alpha,i}(h_\alpha)(x) \le h_\alpha (x) \le c_2 \vert x \vert^{-\alpha}, \quad \quad \vert h^{\prime}_{\alpha} (x) \vert \le c_2 \vert x \vert^{-(1+\alpha)}, \quad c_2>0.  \label{eq:inq}
	\end{equation}
	
	From Lemma \ref{inv.cone}, it implies that
	\begin{equation}\label{eq:enprime}
		\vert (\mathcal{N}_{\alpha,i}(h_\alpha))^\prime (x)\vert \le \left(\frac{a_1}{\vert x \vert} +b_1\right)\mathcal{N}_{\alpha,i}(h_\alpha)(x) \le c_2 (a_1\vert x \vert^{-(1+\alpha)} + b_1\vert x \vert^{-\alpha}), \quad a_1,b_1,c_2>0.
	\end{equation}
	
	Recalling \eqref{eq:zeroth} and \eqref{eq:first}, together with \eqref{eq:inq} and \eqref{eq:enprime}
	\begin{align} \label{eq:An}
		\Vert (&X_{\alpha,i}\mathcal{N}_{\alpha,i}(h_\alpha))^{\prime} \Vert_1 \nonumber\\
		&\le \int_0^1 \left[\tilde{C} \vert x \vert^{\alpha + 1} (1+\vert \ln (\vert x \vert)\vert) \cdot c_2 \vert x \vert^{-(1+\alpha)}(a_1 + b_1\vert x \vert)+ \tilde{C}\vert x \vert^\alpha (1 + \vert \ln (\vert x \vert)) \cdot c_2 \vert x \vert^{-\alpha} \right]\,dx \nonumber\\
		&\le   \tilde{C} \int_{0}^{1}\left[1 + (a_1+b_1|x|)\right](1+|\ln (| x |)|)\,dx < \infty.
	\end{align}
	
	By the Neumann series,
	\begin{equation*}
		(\id- \mathcal{L}_{\alpha})^{-1}= \sum_{j=0}^{\infty} \mathcal{L}_{\alpha}^j,
	\end{equation*}
	hence, the right hand side of \eqref{eq:linearresponse} may be written as
	\begin{equation}
		\left \vert \sum_{j=0}^{\infty}  \int_{S^1} \psi \mathcal{L}_{\alpha}^j \left[\sum_{i \in \{1,d\}}(X_{\alpha,i}(\mathcal{N}_{\alpha,i}(h_{\alpha}))^{\prime})\right]dx \right\vert \le  \sum_{j=0}^{\infty} \left \vert  \int_{S^1} \psi \mathcal{L}_{\alpha}^j \left[\sum_{i \in \{1,d\}}(X_{\alpha,i}(\mathcal{N}_{\alpha,i}(h_{\alpha}))^{\prime})\right]dx \right\vert. \label{eq:summa}
	\end{equation}
	
	Our next task is to show that this series is absolutely convergent for $\alpha \in [0,1)$. We achieve this in two parts. Firstly for $\alpha \in (0,1)$, then for $\alpha=0$. For $\alpha \in (0,1)$,
	we check the hypothesis of Gou\"ezel's theorem in section \ref{Gouezel}. We define the  function
	$$ F_\alpha(x)=\sum_{i \in \{1,d\}}\frac{(X_{\alpha,i}\mathcal{N}_{\alpha,i}(h_\alpha))^{\prime}(x)}{h_{\alpha}(x)},$$
	the bounds for $h_\alpha$ in the inequality \eqref{eq:dens.bound} gives that $F_\alpha(0)=0$. By Lemma \ref{part_PF} we have that
	$\int F_\alpha h_\alpha dx=0$.
	We also claim that $F_\alpha$ is  H\"older. Indeed, 
	\begin{equation}\label{eq.eqef}
		F_\alpha(x)=\sum_{i \in \{1,d\}}X_{\alpha,i}(x)\frac{(\mathcal{N}_{\alpha,i}(h_\alpha))^{\prime}(x)}{h_{\alpha}(x)} +\sum_{i \in \{1,d\}}X_{\alpha,i}^\prime(x) \frac{\mathcal{N}_{\alpha,i}(h_\alpha)(x)}{h_{\alpha}(x)},
	\end{equation}
	$X_{\alpha,i}^\prime $ 
	being H\"older, follows from the bound on 
	$X_{\alpha,i}^{\prime\prime}$
	in \eqref{eq:ssecond}.
	We simplify the second sum above using \eqref{eq:PF} and \eqref{eq:PF_branch},
	\begin{align*}
		\sum_{i \in \{1,d\}}X_{\alpha,i}^\prime(x) \frac{\mathcal{N}_{\alpha,i}(h_\alpha)(x)}{h_{\alpha}(x)} &\le \max_i X_{\alpha,i}^\prime(x)\sum_{i \in \{1,d\}}\frac{(\mathcal{N}_{\alpha,i}(h_\alpha))(x)}{h_{\alpha}(x)}\\
		&=\max_i X_{\alpha,i}^\prime(x)-\max_i X_{\alpha,i}^\prime(x)\sum_{\begin{smallmatrix}
				y \in f_{\alpha,i}^{-1}(x) \\
				2 \le i \le d-1
		\end{smallmatrix}}\frac{h_\alpha(y)}{h_\alpha(x) f_{\alpha}^{\prime}(y)}.
	\end{align*}
	
	We have used the fact that $\mathcal{L}_\alpha h_\alpha=h_\alpha$ in the last equation. $h_\alpha(y)$ is locally  Lipschitz and $\displaystyle \frac{1}{h_\alpha(x)}$ is  H\"older, with $f_\alpha ^\prime$ bounded.
	The product and sum are bounded functions. Hence, is a bounded  H\"older function. Similarly, from \eqref{eq:enprime}, \eqref{eq:zeroth} and \eqref{eq:dens.bound} we have that the first sum in \eqref{eq.eqef} is also  H\"older.
	
	We observe from \eqref{eq:zeroth}, \eqref{eq:first}, \eqref{eq:inq} and \eqref{eq:enprime} that for any 
	$\varepsilon >0$, 
	there exists some $M_\varepsilon>0$ such that for $x \in S^1\setminus\{0\}$,
	\begin{align*}
		\left\vert F_\alpha(x) \right\vert &\le  \vert x \vert^\alpha \left[1 + (a_1+b_1|x|)\right](1+|\ln (| x |)|)\\
		& \le M_\varepsilon \vert x \vert^{\alpha(1- \varepsilon/2)}.
	\end{align*}
	
	From the theorem of Gou\"ezel in section \ref{Gouezel}, taking $\gamma= \alpha(1- \varepsilon/2)>0$, then 
	$$\min \left\{\frac{1}{\alpha} , \frac{1}{\alpha} (1+ \gamma)-1\right\}= \frac{1}{\alpha}  (1+ \gamma)-1> 1/\alpha-\varepsilon.$$
	
	By the duality of the Perron-Frobenius operator, we have that
	\begin{align}
		\sum_{j=0}^{\infty} \left \vert  \int_{S^1} \psi \mathcal{L}_{\alpha}^j \left[\sum_{i \in \{1,d\}}(X_{\alpha,i}\mathcal{N}_{\alpha,i}(h_{\alpha}))^{\prime}\right]dx \right\vert&= \sum_{j=0}^{\infty} \left \vert  \int_{S^1} \psi \mathcal{L}_{\alpha}^j \left(F_\alpha \cdot h_\alpha\right)\,dx \right\vert\nonumber\\
		&\le \Vert \psi \Vert_{\infty} \sum_{j=0}^{\infty}  \int_{S^1} \left\vert \mathcal{L}_{\alpha}^j F_\alpha\right\vert \,d\mu_\alpha\nonumber\\
		&\le C K_\alpha \Vert \psi \Vert_{\infty}\frac{1}{j^{(1/\alpha)-\varepsilon}} \label{eq.sum0}.
	\end{align}
	
	This series is only summable only when $\varepsilon < \frac{1}{\alpha}-1$. 
	
	Now, for $\alpha=0$, applying Proposition \ref{prop.zerodecay} with $\beta \in (0,1)$ fixed, there is a constant $C_q> 0$ such that  to $(X_{0,i}\mathcal{N}_{0,i}(h_0))^{\prime}+ C_q \in \mathcal{C}_{*,1}(\beta, 1, a, b_1)$. For 
	$$\varphi= (X_{0,i}\mathcal{N}_{0,i}(h_0))^{\prime}  \le \tilde{C}(1 + |\ln{(\vert x \vert)|}) \in \mathcal{C}_{*,1}+\mathbb{R}, $$ 
	\begin{equation*}
		\left\vert \int_{S^1} \psi \mathcal{L}_{0}^j\left[\sum_{i \in \{1,d\}}(X_{0,i}\mathcal{N}_{0,i}(h_0))^{\prime}\right] dx\right\vert \le \frac{Cab_1}{(1-\beta)(\log j)j^{-2+1/\beta}} \Vert \psi \Vert_\infty, \quad \forall j \ge 1,
	\end{equation*}
	hence, the claimed linear response formula is well-defined.
	\subsection{Local Lipschitz continuity} \label{sec.lip}
%
%
Next, we assume without loss of generality that for $\psi \in L^\infty(m)$, $\int \psi d\mu_{\alpha}=0$. Our aim is to show that $\beta \mapsto \int \psi \circ f_\beta^n \,dx$ is Lipschitz continuous at $\beta = \alpha$. 
We may write the zero average function $F_\varrho(\cdot)$ as 
$$F_\varrho= h_\varrho -\mathbf{1} \in \mathcal{C}_{*,1} + \R,\quad \varrho>0,$$
then apply the rate of decay result in Lemma \ref{Qdecay} to get an estimate for the decay of correlation
\begin{align}
	\cor(F_\varrho, \psi\circ f_\alpha^n)&=\left| \int F_\varrho(\psi\circ f_\alpha^n) \,dx -\int F_\varrho \,dx\int \psi\circ f_\alpha^n \,dx \right|\nonumber\\
	&=  \bigg\vert \int \psi \,d\mu_\varrho - \int \mathbf{1}(\psi \circ f_{\varrho}^n) \,dx \bigg\vert \label{eq.lipcor}.
\end{align}

In the last equality, we use the fact that $\mathcal{L}^n_\varrho h_\varrho= h_\varrho$ and the duality property of the Perron-Frobenius operator. 

Let $\varrho=\alpha > 0$ 
, then from Lemma \ref{Qdecay}, we have that
\begin{equation}
	\bigg\vert \int \mathbf{1}(\psi \circ f_{\alpha}^n) \,dx \bigg\vert \le C_\alpha \Vert \psi \Vert_\infty \frac{(\log n)^{1/\alpha}}{n^{1/\alpha-1}}.   \label{eq:pert_alpha}
\end{equation}

By \cite[Theorem 5]{YL99}, the unperturbed system, $\varrho=\alpha=0$ has the following estimate
\begin{equation}
	\bigg\vert \int \mathbf{1}(\psi \circ f_{0}^n)\, dx \bigg\vert \le C {\theta^n}, \quad \theta<1.   \label{eq:unpert_alpha}
\end{equation}

Suppose that $\varrho$ is any $\beta > 0$, \eqref{eq.lipcor} becomes
\begin{equation}
	\bigg\vert \int \psi \mathcal{L}_{\beta}^n F_\beta \,dx \bigg\vert=\bigg\vert \int \psi \,d\mu_\beta - \int \mathbf{1}(\psi \circ f_{\beta}^n) \,dx \bigg\vert\le C_\beta \Vert \psi \Vert_\infty \frac{(\log n)^{1/\beta}}{n^{1/\beta-1}}. \label{eq:pert_beta}
\end{equation} 

Choosing $n$ large enough, depending on $\alpha$ and $\beta$. The inequalities \eqref{eq:pert_alpha}, \eqref{eq:unpert_alpha}, \eqref{eq:pert_beta} are $\smallO(\beta - \alpha)$.
That is, fixing $\xi >0$, there is $C>0$ such that for all
\begin{equation}
	\quad  n(\alpha, \beta, \xi)=: n > C\left(C_{\max\{\alpha, \beta\}}(\beta - \alpha)^{-(1+\xi)}\right)^{1/(-1+1/\max\{\alpha, \beta\})}, \label{eq:en}
\end{equation}
we have that
\begin{equation}
	\bigg\vert \int(\psi \circ f_{\alpha}^n)\,dx\bigg\vert + \bigg\vert \int \psi \,d\mu_\beta - \int (\psi \circ f_{\beta}^n)\,dx  \bigg\vert \le C (\beta - \alpha)^{1+\xi}. \label{eq:bound}
\end{equation}

What we want to ultimately show is that
\begin{equation*}
	\lim_{\beta \to \alpha} \frac{1}{\beta-\alpha} \left[\left(\int \psi \,d\mu_\beta - \int \psi \,d\mu_\alpha \right) - \left(\int (\psi \circ f_{\beta}^n)\,dx- \int(\psi \circ f_{\alpha}^n)\,dx \right) \right] = 0.
\end{equation*}

We have that the term in the square bracket is bounded by \eqref{eq:bound}. 
Suppose that $n:= n(\alpha,\beta, \xi)$, and let $\mathbf{1}$ be the constant function $\equiv 1$, we have by using the duality property of the Perron-Frobenius operator
\begin{align}
	\frac{1}{\beta-\alpha} \bigg(\int (\psi \circ f_{\beta}^n)\,dx- \int(\psi \circ f_{\alpha}^n)\,dx \bigg) 
	&= \sum_{j=0}^{n-1} \int_{0}^{1} \psi \mathcal{L}_{\beta}^{j}\bigg(\frac{(\mathcal{L}_{\beta}-\mathcal{L}_{\alpha})}{\beta-\alpha}(\mathcal{L}_{\alpha}^{n-1-j} (\mathbf{1}))\bigg)\,dx, \label{teles}
\end{align}
notice that we used the telescoping sum that for every $\alpha, \beta, n$,
$$\mathcal{L}_{\beta}^n \mathbf{1}-\mathcal{L}_{\alpha}^n \mathbf{1}=
\sum_{j=0}^{n-1}\mathcal{L}_{\beta}^{j}(\mathcal{L}_{\beta}-\mathcal{L}_{\alpha})\mathcal{L}_{\alpha}^{n-1-j} (\mathbf{1}).
$$



Taylor's formula gives that for any $\varphi \in C^{2}\left(S^1\setminus\{0\}\right), \beta \neq \alpha, x \neq 0$ 
\begin{equation}
	\frac{(\mathcal{L}_{\beta}-\mathcal{L}_{\alpha})\varphi(x)}{\beta-\alpha}= \partial_{\alpha}\mathcal{L}_{\alpha} \varphi (x)+ \frac{1}{\beta-\alpha} \int_{\alpha}^{\beta}(\beta-t) \partial_{t}^2 \mathcal{L}_{t} \varphi (x) \,dt. \label{tay_beta-PF}
\end{equation} 

Assuming that $\beta > \alpha > 0$, $n \ge 1$ in \eqref{teles}, Lemma \ref{part_PF} and Lemma~ \ref{partwo} gives that
\begin{align}
	\sum_{j=0}^{n-1} \int_{0}^{1} \psi \mathcal{L}_{\beta}^{j}\bigg(\frac{(\mathcal{L}_{\beta}-\mathcal{L}_{\alpha})}{\beta-\alpha}(\mathcal{L}_{\alpha}^{n-1-j} (\mathbf{1}))\bigg)dx&= \sum_{j=0}^{n-1} \int_{0}^{1} \psi \mathcal{L}_{\beta}^{j}\bigg(\partial_{\alpha}\mathcal{L}_{\alpha} (\mathcal{L}_{\alpha}^{n-1-j} (\mathbf{1})) \nonumber
	\\&+ \frac{1}{\beta-\alpha} \int_{\alpha}^{\beta}(\beta-t) \partial_{t}^2 \mathcal{L}_{t} (\mathcal{L}_{\alpha}^{n-1-j} (\mathbf{1}))\,dt\bigg)\,dx \nonumber
\end{align}
\begin{align}
	\sum_{j=0}^{n-1} \int_{0}^{1} \psi \mathcal{L}_{\beta}^{j}&\left(\frac{(\mathcal{L}_{\beta}-\mathcal{L}_{\alpha})}{\beta-\alpha}(\mathcal{L}_{\alpha}^{n-1-j} (\mathbf{1}))\right)dx=\nonumber\\ &\quad\quad\quad\underbrace{-\sum_{j=0}^{n-1} \int_{0}^{1} \psi \mathcal{L}_{\beta}^{j}\left(\sum_{i \in \{1,d\}}(X_{\alpha,i}\mathcal{N}_{\alpha,i}\mathcal{L}_{\alpha}^{n-1-j} (\mathbf{1}))^{\prime}\right)dx}_{A_n} \nonumber
	\\ &\quad\quad\quad\quad\quad\quad\quad+\underbrace{\int_{\alpha}^{\beta} \frac{\beta-t}{\beta-\alpha}  \sum_{j=0}^{n-1} \int_{0}^{1} \psi \mathcal{L}_{\beta}^{j} \left( \partial_{t}^2 \mathcal{L}_{t} (\mathcal{L}_{\alpha}^{n-1-j} (\mathbf{1}))\right)\,dx\,dt}_{B_n}.
\end{align}

\textbf{Case 1}\,: We use the theorem of Gou\"ezel's in section \ref{Gouezel} to show that $A_n$ and $B_n$ are summable, for $0< \alpha < 1$.
\subsubsection{Summability of $A_n$\,:}$\,$ Checking the assumptions of \cite[Theorem 2.4.14]{gouezel2004vitesse} (see the end of section \ref{Gouezel}), allows us to give conclusions about the summability of this series. Using integration by parts $(X_{\alpha,i}\mathcal{N}_{\alpha,i}\mathcal{L}_{\alpha}^{n-1-j} (\mathbf{1}))^{\prime}$ is a zero average function. Next, $(X_{\alpha,i}\mathcal{N}_{\alpha,i}\mathcal{L}_{\alpha}^{n-1-j} (\mathbf{1}))^{\prime}$ is bounded. Finally, to check that $\displaystyle H_\alpha(x)=~ \sum_{i \in \{1,d\}}\frac{(X_{\alpha,i}\mathcal{N}_{\alpha,i}(\mathcal{L}_{\alpha}^{n-1-j} (\mathbf{1})))^{\prime}}{h_\alpha}$ is H\"older, we notice from the calculations in \eqref{eq:An} that the numerator is bounded. Next, using the same bounds, and \eqref{eq:dens.bound}, $|H^\prime_\alpha(x)| \le C x^{\alpha -1}$.
Also, $H_\alpha(0)=0$,
hence, same as in the calculations of \eqref{eq.sum0}, and the fact that we assumed that $0<\alpha<\beta<1$, we have that

\begin{equation*}
	\sum_{j=0}^{n-1} \int_{S^1} \bigg\vert \psi \mathcal{L}_{\beta}^j\bigg(\sum_{i \in \{1,d\}} (X_{\alpha,i}\mathcal{N}_{\alpha,i}(\mathcal{L}_{\alpha}^{n-1-j} (\mathbf{1})))^{\prime}\bigg)\bigg\vert \,dx \le C_{\beta} \Vert \psi \Vert_{\infty} \sum_{j=0}^{n-1}\frac{1}{j^{1/\beta - \varepsilon}},
\end{equation*}
which is summable as $n \to \infty$.
\subsubsection{Summability of $B_n$\,:\,} Now, we have that from \eqref{secondder}, let $\varphi=\mathcal{L}_{\alpha}^{n-1-j}(\mathbf{1}) \in \mathcal{C}\cap \mathcal{C}_{*,1}$, by the invariance of the cone, for any $\alpha\le t \le \beta$, $\displaystyle \mathcal{L}_{t} (\mathcal{L}_{\alpha}^{n-1-j} (\mathbf{1})) \in \mathcal{C}$.

{\it{Claim}:} $\vert \partial_t^{2}\mathcal{L}_{t} (\mathcal{L}_{\alpha}^{n-1-j} (\mathbf{1})) \vert < \infty.$\\

{\it{Proof of claim:}}  We find the bounds on $\partial_{\alpha}X_{\alpha,i}(x)$ and $\partial_{\alpha}X^{\prime}_{\alpha,i}(x)$. Using the chain rule, we note from \eqref{eq.eqeks} that,
\begin{equation*}
	\partial_{\alpha}X_{\alpha,i}(x)= (\partial_{\alpha}g_{\alpha,i}(x))\,(v_{\alpha}^{\prime}\circ g_{\alpha,i}(x))+\partial^2_{\alpha}f_\alpha \circ g_{\alpha,i}(x).
\end{equation*}
	
	By \eqref{eq:firstderivative} and \eqref{eq.eqvee},
	\begin{align}\label{eq.eqpartialf}
		\begin{split}
			\partial_\alpha f_\alpha^\prime(x) &\approx |x|^\alpha \ln(|x|),\\
			\partial_{\alpha}^2 f_{\alpha}(x) &\approx \ln(\vert x \vert)  \partial_{\alpha} f_{\alpha}(x).
		\end{split} 
	\end{align}

	
	Hence, from \eqref{eq:par g}, \eqref{eq.eqpartialf}, \eqref{eq.eqeks}
	\begin{equation}\label{eq.eqpartialx}
		\partial_{\alpha}X_{\alpha,i}(x)\approx \left(- \frac{1}{f_{\alpha}^{\prime}(g_{\alpha,i}(x))} |g_{\alpha,i}(x)|^\alpha  + 1\ \right) \ln(|g_{\alpha,i}(x)|) X_{\alpha,i}(x).
	\end{equation}
	
	Using the estimate $g_{\alpha,i}(x)\le Cx$, \eqref{eq:zeroth} and  \ref{s2},
	\begin{equation}\label{eq:partialeks}
		\vert \partial_{\alpha}X_{\alpha,i}(x) \vert \le \tilde{C} \vert x \vert^{\alpha+1} (1+\vert \ln (\vert x \vert)\vert)^2,
	\end{equation}
	next, differentiate \eqref{eq.eqpartialx} with respect to $x$,  to get the following bounds
	\begin{equation}\label{eq:partialeksprime}
		\vert \partial_{\alpha}X^{\prime}_{\alpha,i}(x) \vert \le C \vert x \vert^{\alpha} (1+\vert \ln (\vert x \vert)\vert)^2.
	\end{equation}
	
	Now, we see that since $\varphi \in \mathcal{C}_{*,1}(\alpha) \cap \mathcal{C}$, Lemma \ref{inv.cone} implies that
	\begin{align*}
		\vert (\mathcal{N}_{\alpha,i}\varphi)^{\prime \prime}(x)\vert &\le \bigg(\frac{a_2}{x^2}+b_2\bigg)\mathcal{N}_{\alpha,i}\varphi(x)\le \bigg(\frac{a_2}{x^2}+b_2\bigg)\cdot \frac{2c_2}{\vert x \vert^\alpha} m(\varphi) \le 2c_2\vert x \vert^{-(\alpha+2)}(a_2+ b_2 x^2).
	\end{align*}
	
	From Lemma \ref{partwo}, we have that
	\begin{align*}
		\partial_{t}^{2}\mathcal{L}_{t}\varphi(x)&=\sum_{i \in \{1,d\}}[\underbrace{-((\partial_{t}X_{t,i})(\mathcal{N}_{t,i}\varphi))^{\prime}(x)}_\text{(I)}+\underbrace{X_{t,i}^{\prime}(X_{t,i}\mathcal{N}_{t,i}\varphi)^{\prime}(x)}_\text{(II)}+\underbrace{X_{t,i}(X_{t,i}\mathcal{N}_{t,i}\varphi)^{\prime \prime}(x)}_\text{(III)}]
	\end{align*}
%
Now,simplifying the terms above, we have that	
	from \eqref{eq:zeroth}, \eqref{eq:first}, \eqref{eq:ssecond}, \eqref{eq:Kone1}, \eqref{inv.cone}, \eqref{eq:partialeks} and \eqref{eq:partialeksprime}, we bound the above as follows
	\begin{align*}
		\vert \text{(I)} \vert 
		&\le C \vert x \vert^{t-\alpha}(1+|\ln (\vert x \vert)|)^2 [\max\{a_1, b_1\}(1+\vert x \vert) + 1].
	\end{align*}
	\begin{align*}
		\vert \text{(II)} \vert 
		&\le C \vert x \vert^{2t-\alpha}(1+|\ln (\vert x \vert)|)^2 [\max\{a_1, b_1\}(1+\vert x \vert) + 1].
	\end{align*}
	\begin{align*}
		\vert \text{(III)} \vert 
		&\le C \vert x \vert^{2t-\alpha}(1+|\ln (\vert x \vert)|)^2 [1+\max\{a_1, a_2\}+\max\{b_1, b_2\}\vert x \vert ].
	\end{align*}
	
	Since $x \in S^1\setminus\{0\}$ and $0<\alpha\le t \le \beta < 1$, 
	\begin{equation}\label{eq:part.sq}
		|\partial_{t}^{2}\mathcal{L}_{t}\varphi(x)| \le C |x|^{t-\alpha}(1+|\ln (\vert x \vert)|)^2 [1+\max\{a_1, a_2\}+\max\{b_1, b_2\}\vert x \vert ],
	\end{equation}
	which proves our claim. 
	Next, we check the hypothesis of Gou\"ezel's theorem in section \ref{Gouezel}. Firstly,
	observe that just as in section \ref{sec.welldef}, we can find a $\gamma>0$ such that
	$$\vert \partial_{t}^2 \mathcal{L}_{t} (\mathcal{L}_{\alpha}^{n-1-j} (\mathbf{1}))\vert \le C_\gamma |x|^{\gamma},$$
	where $C_\gamma$ is independent of $n$ and $j$. 
	We see also that from Lemma \ref{partwo}, that $\partial_{t}^2 \mathcal{L}_{t} (\mathcal{L}_{\alpha}^{n-1-j} (\mathbf{1}))$ has zero mean. 
	Define a  H\"{o}lder function 
	$\displaystyle G_\alpha(x)=~\frac{\partial_{t}^2 \mathcal{L}_{t} (\mathcal{L}_{\alpha}^{n-1-j} (\mathbf{1}))(x)}{h_\alpha(x)}.$
	In addition to the bounds we got above and the bounds in \eqref{eq:dens.bound}, to get the required bounds on $\left |G_\alpha^\prime (x) \right|$,
	we only need get the bounds on $(\partial_{t}^2 \mathcal{L}_{t} (\mathcal{L}_{\alpha}^{n-1-j} (\mathbf{1}))(x))^\prime$. By the commutation relation \eqref{eq:commutation}, we have to differentiate each of the three terms $\text{(I)}-\text{(III)}$ above. 
	The bounds on $\partial_{\alpha}X^{\prime \prime}_{\alpha,i}(x)$, is
	\begin{equation}\label{eq:partialeksprimeprime}
		\partial_{\alpha}X^{\prime \prime}_{\alpha,i}(x) \le C \vert x \vert^{\alpha-1} (1+\vert \ln (\vert x \vert)\vert)^2.
	\end{equation}
	
	 We then bound the derivatives as follows, taking $\varphi=\mathcal{L}_{\alpha}^{n-j-1}(\mathbf{1}) \in \mathcal{C}_{*,1}(\alpha) \cap \mathcal{C}$. Using the \eqref{eq:zeroth}, \eqref{eq:first}, \eqref{eq:ssecond}, \eqref{eq:third}, 
	\eqref{eq:partialeks}, \eqref{eq:partialeksprime}, \eqref{eq:partialeksprimeprime}, there is a $C>0$ such that
	\begin{align}
		\left| \text{(I$^\prime$)}\right| &\le C |x|^{t-\alpha-1}(1+|\ln(|x|)|)^2\,[1+\max\{a_1,a_2\}+\max\{b_1,b_2\}|x|], \nonumber\\
		\left| \text{(II$^\prime$)}\right|&\le C |x|^{2t-\alpha-1}\left(1+|\ln(|x|)|\right)^2 \, \left[1+\max\{a_1,a_2\}+\max\{b_1,b_2\}|x|\right], \nonumber\\
		\left| \text{(III$^\prime$)}\right| &\le C |x|^{2t-\alpha-1}\left(1+|\ln(|x|)|\right)^2 \, [1+\max\{a_1,a_2, a_3\}+\max\{b_1,b_2, b_3\}|x|], \nonumber
	\end{align}
	for $x \in S^1\setminus\{0\}$ and $0<\alpha\le t \le \beta < 1$, 
	\begin{equation*}
		|(\partial_{t}^2 \mathcal{L}_{t} (\mathcal{L}_{\alpha}^{n-1-j} (\mathbf{1}))(x))^\prime| \le C |x|^{t-\alpha-1}(1+|\ln(|x|)|)^2 \, [1+\max\{a_1,a_2, a_3\}+\max\{b_1,b_2, b_3\}|x|],
	\end{equation*}
	which is finite and thus allows us to conclude that $G(x)$ is  H\"{o}lder. Also, $G(0)=0$. Since it satisfies the assumptions of the theorem,
	we easily bound $B_n$ as follows
	\begin{align}
		\int_{\alpha}^{\beta} \frac{\beta-t}{\beta-\alpha}  \sum_{j=0}^{n-1} \int_{0}^{1} \psi \mathcal{L}_{\beta}^{j} \left( \partial_{t}^2 \mathcal{L}_{t} (\mathcal{L}_{\alpha}^{n-1-j} (\mathbf{1}))\right)\,dx\,dt &\le \frac{\Vert \psi \Vert_\infty}{\beta-\alpha}  \int_{\alpha}^{\beta} (\beta-t) \left(C_{\beta} \sum_{j=0}^{n-1}\frac{1}{j^{1/\beta-\varepsilon}}\right)\, dt \nonumber \\
		&=\Vert \psi \Vert_\infty\cdot (\beta-\alpha)  \left(C_{\beta} \sum_{j=0}^{n-1}\frac{1}{j^{1/\beta-\varepsilon}}\right),
		\label{lipbound}
	\end{align}
	as in \eqref{eq.sum0}, the term in the bracket is summable,
	and 
	\eqref{lipbound}~$\to 0$ as $\beta \to \alpha$. Similarly,
	for $\alpha \in (0,1)$, with $\beta < \alpha$, we get the same estimate as before, on substituting
	\begin{equation*}
		\sum_j^{n-1}\mathcal{L}_{\beta}^j(\mathcal{L}_{\beta}-\mathcal{L}_{\alpha})\mathcal{L}_{\alpha}^{n-j-1}=- \sum_j^{n-1}\mathcal{L}_{\alpha}^j(\mathcal{L}_{\alpha}-\mathcal{L}_{\beta})\mathcal{L}_{\beta}^{n-j-1}
	\end{equation*}
	into \eqref{teles}. Hence, the Taylor's formula now is
	for any $\varphi \in C^{2}(S^1\setminus\{0\}), \beta \neq \alpha, x \neq 0$ 
	\begin{equation}
		\frac{(\mathcal{L}_{\alpha}-\mathcal{L}_{\beta})\varphi(x)}{\alpha-\beta}= \partial_{\beta}\mathcal{L}_{\beta} \varphi (x)+ \frac{1}{\alpha-\beta} \int_{\beta}^{\alpha}(t-\beta) \partial_{t}^2 \mathcal{L}_{t} \varphi (x) dt \label{eq:taylor2}
	\end{equation} 
	
	\textbf{Case 2}\,: For $\alpha=0$, \eqref{teles} can also be bounded in a similar manner, using instead, Proposition~ \ref{prop.zerodecay}. The decomposition in the Taylor's formula can be done as thus,
	\begin{equation}
		\sum_j^{n-1}\mathcal{L}_{\beta}^j(\mathcal{L}_{\beta}-\mathcal{L}_{0})\mathcal{L}_{0}^{n-j-1}=- \sum_j^{n-1}\mathcal{L}_{0}^j(\mathcal{L}_{0}-\mathcal{L}_{\beta})\mathcal{L}_{\beta}^{n-j-1}. \label{eq:zerosub}
	\end{equation}
	
	Substituting $\alpha=0$ into \eqref{teles}, 
	and make the substitution from \eqref{eq:zerosub}. Also, making the substitution of $\alpha=0$ in \eqref{eq:taylor2} and subsequently using
%
	Lemma \ref{part_PF} and Lemma \ref{partwo} as before, we have
	\begin{align*}
		\frac{1}{\beta} &\left(\int (\psi \circ f_{\beta}^n)\,dx- \int(\psi \circ f_{0}^n)\,dx \right) = \\&\quad\quad\quad\quad\quad\quad\quad\quad\quad\underbrace{-\sum_{j=0}^{n-1} \int_{0}^{1} \psi \mathcal{L}_{0}^{j}\bigg(\sum_{i \in \{1,d\}}(X_{\beta,i}\mathcal{N}_{\beta,i}\mathcal{L}_{\beta}^{n-1-j} (\mathbf{1}))^{\prime}\bigg)\,dx}_{A_{n_0}} \\
		&\quad\quad \quad\quad\quad\quad\quad\quad\quad\quad\quad\quad\quad\quad+ \underbrace{\int_{0}^{\beta}\frac{t-\beta}{\beta} \sum_{j=0}^{n-1} \int_{0}^{1} \psi \mathcal{L}_{0}^{j}\left(\partial_{t}^2 \mathcal{L}_{t} (\mathcal{L}_{\beta}^{n-1-j} (\mathbf{1}))\right)\,dx\,dt}_{B_{n_0}}.
	\end{align*}
	
	The summability of $A_{n_0}$ and $B_{n_0}$ follows from Proposition \ref{prop.zerodecay}. This shows that for $\psi \in L^{\infty}(m)$ and $\beta \in [0,1)$, $\beta \mapsto \int \psi \circ f_\beta^n \,dx$ is locally Lipschitz.
	\subsection{Convergence to the limit} \label{sec.conv}
	Finally, we show the differentiability of $\beta \mapsto \int \psi d\mu_{\beta}$,
	at $\beta=\alpha \in [0,1)$. The idea is to show that as $\beta \to \alpha$, $B_n \to 0$ (resp. $B_{n_0} \to 0$) , and $A_n$ (resp. $A_{n_0}$) converges to the claimed entity. Recall  \eqref{eq:bound},
	with $n$ as in \eqref{eq:en}, setting $n(\beta)= n(\alpha, \beta, \xi)\,\text{ for small } \xi>0$. It suffices to check that when 
	$\beta \to \alpha^+= 0$,
	\begin{align}
		\sum_{j=0}^{n(\beta)} \int_{0}^{1} \psi \mathcal{L}_{\beta}^{j}\bigg(\sum_{i \in \{1,d\}}(X_{\alpha,i}\mathcal{N}_{\alpha,i}&\mathcal{L}_{\alpha}^{n-j} (\mathbf{1}))^{\prime}\bigg)\,dx\nonumber\\
		&+ \int_{\alpha}^{\beta} \frac{\beta-t}{\beta-\alpha}  \sum_{j=0}^{n(\beta)-1} \int_{0}^{1} \psi \mathcal{L}_{\beta}^{j} \bigg( \partial_{\alpha}^2 \mathcal{L}_{t} (\mathcal{L}_{\alpha}^{n-1-j} (\mathbf{1}))\bigg)\,dx\,dt, \label{eq:Bzero}
	\end{align}
	converges to
	\begin{equation}
		\sum_{j=0}^{\infty} \int_{S^1} \psi \mathcal{L}_{\alpha}^{j}\bigg(\sum_{i \in \{1,d\}}(X_{\alpha,i}\mathcal{N}_{\alpha,i}(h_{\alpha}))^{\prime}\bigg)dx=  \int_{S^1} \psi \sum_{j=0}^{\infty} \mathcal{L}_{\alpha}^{j}\bigg(\sum_{i \in \{1,d\}}(X_{\alpha,i}\mathcal{N}_{\alpha,i}(h_{\alpha}))^{\prime}\bigg)\,dx, \label{eq:LRform}
	\end{equation}
	the linear response formula. By the summability of $B_n$ (see \eqref{lipbound}), the second term of \eqref{eq:Bzero}
	\begin{equation*}
		\int_{\alpha}^{\beta} \frac{\beta-t}{\beta-\alpha}  \sum_{j=0}^{n(\beta)-1} \int_{0}^{1} \psi \mathcal{L}_{\beta}^{j} \bigg( \partial_{\alpha}^2 \mathcal{L}_{t} (\mathcal{L}_{\alpha}^{n-1-j} (\mathbf{1}))\bigg)\,dx\,dt \to 0, \quad \text{ as } \beta \to \alpha.
	\end{equation*}
	
	Next, for $\alpha \in [0,1)$, fix $\eta>0$,
	we can take $R=R_\eta$ large enough so that the tail of the series in $\eqref{eq:LRform} < \frac{\eta}{4}$, while the tail of the first term of $\eqref{eq:Bzero} < \frac{\eta}{4}$ uniformly in $\beta$. Let $\eta >0$ (small), 
	\begin{align*}
		&\sum_{j=R_\eta}^{\infty} \int_{0}^{1} \psi \mathcal{L}_{\alpha}^{j}\bigg(\sum_{i \in \{1,d\}}(X_{\alpha,i}\mathcal{N}_{\alpha,i}(h_{\alpha}))^{\prime}\bigg)\,dx < \frac{\eta}{4},\\
		&\sum_{j=R_\eta}^{n(\beta)} \int_{0}^{1} \psi \mathcal{L}_{\beta}^{j}\bigg(\sum_{i \in \{1,d\}} \left(X_{\alpha,i}\mathcal{N}_{\alpha,i}\mathcal{L}_\alpha^{n-1-j}(\mathbf{1})\right)^\prime\bigg)\,dx < \frac{\eta}{4},
	\end{align*}
	since both series converges (from section \ref{sec.welldef} and section \ref{sec.lip}). 
	Now for every fixed $0\le j \le R_\eta$,
	we show that the difference tends to $0$, as 
	$\beta \to \alpha^+=0$
	\begin{align}
		&\sum_{R_\eta}^{\infty}\bigg\{ \int_{0}^{1} \bigg[ (\psi \circ f_{\beta}^j)\bigg(\sum_{i \in \{1,d\}}(X_{\alpha,i}\mathcal{N}_{\alpha,i}(\mathcal{L}_{\alpha}^{n(\beta)-j}(\mathbf{1}))^{\prime})\bigg) - (\psi \circ f_{\alpha}^j ) \bigg(\sum_{i \in \{1,d\}}(X_{\alpha,i}\mathcal{N}_{\alpha,i}(h_{\alpha}))^{\prime}\bigg)\bigg]\,dx\bigg\} \nonumber \\
		&=\sum_{R_\eta}^{\infty} \varsigma. \label{eq:diff2}
	\end{align}
	It is enough now to show that $\vert \varsigma \vert < \frac{\eta}{2R_{\eta}}$ i.e $\forall j \le R_\eta$ and for $\beta = \beta(\eta) \to \alpha$ as $\eta \to 0$. Naturally, as $\eta \to 0$, $\beta(\eta) \to \alpha$. So it is sufficient to show that $\exists$ $N_\eta \ge 1$ so that
	\begin{align}
		\bigg\Vert \sum_{i \in \{1,d\}}(X_{\alpha,i}&\mathcal{N}_{\alpha,i}(\mathcal{L}_{\alpha}^{n}(\mathbf{1})))^{\prime} - \sum_{i \in \{1,d\}}(X_{\alpha,i}\mathcal{N}_{\alpha,i}(h_{\alpha}))^{\prime} \bigg\Vert_1 \nonumber\\
		&= \left\| \sum_{i \in \{1,d\}} \left((X_{\alpha,i}\mathcal{N}_{\alpha,i}(\mathcal{L}_{\alpha}^{n}(\mathbf{1})))^{\prime}- (X_{\alpha,i}\mathcal{N}_{\alpha,i}(h_{\alpha}))^{\prime}\right)\right\|_1 \le \frac{\eta}{2R_\eta}, \quad \forall n\ge N_\eta.  \label{eq:Bee}
	\end{align}
	
	\textbf{Case 1}\,: At $\alpha=0$, this holds trivially by \eqref{eq:Bee} and since $h_{0}=\mathbf{1}$.
	
	\textbf{Case 2}\,: $\alpha \in (0,1)$, fix $\alpha$. Now, set 
	$\phi_n:=\mathcal{L}_\alpha^{n}{(\mathbf{1})}.$
	By the Leibniz rule, we get
	\begin{align}
		&\left\| \sum_{i \in \{1,d\}} \left[(X_{\alpha,i}\mathcal{N}_{\alpha,i}(\mathcal{L}_{\alpha}^{n}(\mathbf{1})))^{\prime}- (X_{\alpha,i}\mathcal{N}_{\alpha,i}(h_{\alpha}))^{\prime}\right]\right\|_1 \nonumber\\
		\nonumber\\
		&\le \underbrace{  \left\| \max_i X_{\alpha,i}^{\prime}\sum_{i \in \{1,d\}}  \mathcal{N}_{\alpha,i}(\phi_n- h_{\alpha})\right\|_1}_\text{(I)} + \underbrace{  \left\|     \sum_{i \in \{1,d\}}\left[X_{\alpha,i}\left(\mathcal{N}_{\alpha,i}(\phi_n-h_{\alpha})\right)^{\prime}\right]\right\|_1}_\text{(II)}
	\end{align}
	
	$X_{\alpha,i}^\prime \in L^\infty(m)$, by the H\"older inequality, \eqref{eq:PF}, and the contraction property of $\mathcal{L}_\alpha(\cdot)$
	\begin{align}
		\text{(I)}
		&\le \left\| \max_i X_{\alpha,i}^{\prime}\right\|_{\infty}  \left\| \mathcal{L}_\alpha(\phi_n- h_{\alpha})\right\|_1 \nonumber\\
		&\le C_\alpha  \Vert \phi_n-h_{\alpha} \Vert_{1}. \nonumber
	\end{align}
	
	From \cite[Theorem 5]{YL99}, we have that
	\begin{equation}
		\text{(I)} \le C_\alpha n^{1-1/\alpha}, \label{eq:LSYest}
	\end{equation}
	which is only summable for $\alpha < 1/2$. However, it tends to zero for all $\alpha \in (0,1)$.
	\begin{remark}
		Setting $\psi=\mathbf{1}$ and $\varphi=\mathbf{1}-h_\alpha$,	we may as well use Lemma \ref{Qdecay} to bound (I) above.
	\end{remark}
	Since $X_{\alpha,i} \in L^\infty(m)$ we need only show that $\displaystyle\text{(II)} < \frac{\eta}{2R_{\eta}}$.
	Now, same as before, we let $\varphi \in \{\mathbf{1}, h_{\alpha}\}$, for any $\bar{x} \in S^1\setminus\{0\}$, $n \ge 0$, $X_{\alpha,i}\in L^\infty(m)$, by the H\"older inequality
	\begin{align*}
		\text{(II)}
		&\le \int_{S^1}\left\vert  \sum_{i \in \{1,d\}}\left[X_{\alpha,i}\left(\mathcal{N}_{\alpha,i}(\phi_n-h_{\alpha})\right)^{\prime}\right] \right\vert dz\\
		&=
		\underbrace{\int_0^{\bar{x}} \left|  \sum_{i \in \{1,d\}}\left[X_{\alpha,i}\left(\mathcal{N}_{\alpha,i}(\phi_n-h_{\alpha})\right)^{\prime}\right] \right| dz}_\text{(A)}+ \underbrace{\int_{\bar{x}}^{1-{\bar{x}^\prime}} \left|  \sum_{i \in \{1,d\}}\left[X_{\alpha,i}\left(\mathcal{N}_{\alpha,i}(\phi_n-h_{\alpha})\right)^{\prime}\right] \right| dz}_\text{(B)}\\
		&\quad\quad\quad\quad\quad\quad\quad\quad\quad\quad\quad\quad\quad\quad\quad\quad\quad\quad\quad\quad+  \underbrace{\int_{\bar{x}^\prime}^1 \left|  \sum_{i \in \{1,d\}}\left[X_{\alpha,i}\left(\mathcal{N}_{\alpha,i}(\phi_n-h_{\alpha})\right)^{\prime}\right] \right| dz}_\text{(C)}.
	\end{align*}
	
	We recall the bounds in \eqref{eq:zeroth} and the definition of the cone equation \eqref{eq:Kone1} and in Lemma \ref{inv.cone} with $\varphi=\{h_\alpha, \mathbf{1}\}$, there exists $a_1,b_1,c_2>0$ such that
	\begin{equation*}
		\vert (\mathcal{N}_{\alpha,i}\mathcal{L}^n(\varphi))^\prime (x)\vert \le \left(\frac{a_1}{\vert x \vert} +b_1\right)\mathcal{N}_{\alpha,i}(\varphi)(x) \le c_2 (a_1\vert x \vert^{-(1+\alpha)} + b_1\vert x \vert^{-\alpha}), \,  \forall n \ge 1, x \in S^1\setminus\{0\}.
	\end{equation*}
	\begin{align}
		(\text{A}) &\le 2 \sum_{i \in \{1,d\}}\int_0^{\bar{x}} \left|  X_{\alpha,i}(z)\left(\mathcal{N}_{\alpha,i}\mathcal{L}^n(\varphi)\right)^{\prime}(z) \right| \,dz\nonumber\\
		&\le \tilde{C} \int_0^{\bar{x}} (1+|\ln(|z|)|) (a_1 + b_1\vert z \vert)\,dz , \quad \tilde{C} \ge1 \nonumber\\
		&\le \tilde{C} \bar{x}\left[a_1\left(\left|\ln(|\bar{x}|)\right|+2\right)+\frac{b_1|\bar{x}|}{2}\left(2+\frac{|\bar{x}|}{2}\right)\right].\label{eq:eksbar}
	\end{align}
	
	The estimate for (C) is similar. We observe that \eqref{eq:eksbar} is small for small $\bar{x}$. To estimate (B), we choose $n$ large enough, so that this is so small, and we bound it. Now, by H\"older inequality and equation \eqref{eq:PF}
	\begin{align}
		\text{(B)}&\le \int_{\bar{x}}^{1-{\bar{x}^\prime}} \left|\max_i X_{\alpha,i} \sum_{i \in \{1,d\}}\left(\mathcal{N}_{\alpha,i}\mathcal{L}^n(\mathbf{1}-h_\alpha)\right)^{\prime} \right| \,dz\nonumber\\
		&\le C_\alpha \int_{\bar{x}}^{1-{\bar{x}^\prime}} \left| ( \mathcal{L}_{\alpha}^{n+1}(\mathbf{1}-h_\alpha))^{\prime}(z)\right| dz +C_\alpha \sum_{\begin{smallmatrix}
				y \in f_{\alpha,i}^{-1}(x) \\
				2 \le i \le d-1
		\end{smallmatrix}} \int_{\bar{x}}^{1-{\bar{x}^\prime}} \left| \frac{(\mathcal{L}_{\alpha}^{n}(\mathbf{1}-h_\alpha))^{\prime}(g_{\alpha,i}(x))}{f_{\alpha}^{\prime}(g_{\alpha,i}(x))} \right| dz. \label{eq:eks}
	\end{align} 
	
	The second integral is ignored, of course, if it is a $d=2$ branch circle map. To estimate the first integral in the above equation, we proceed as follows. For $x_0$ (resp. $x_0^\prime$) in the neighbourhood of the intermittent fixed point, 
	define 
	$x_0 \in (0, \varepsilon_0]$ and 
	\begin{equation}
		\begin{split}
			\bar{x}=x_l=g_{\alpha,1}^l(x_0)
			\qand
			{\bar{x}^\prime}=x^\prime_l=g_{\alpha,d}^l(x_0^\prime)
			, \quad l \ge 1,
		\end{split}
	\end{equation}
	we define similarly $g_{\alpha,d}^l$ in the neighbourhood around $[ -\varepsilon_0,0)$. This gives rise to the sequence, $(x_l)_l$ (resp.  $(x_l^\prime)_l$ ). From  \cite[Remark 3.63]{JFA20},
	\begin{align}\label{eq.assymp}
		x_l \approx l^{-1/\alpha} \qand x_l^\prime \approx l^{-1/\alpha} \quad \quad \text{ for } l \ge 1.
	\end{align}
	Next, for every $1 \le m \le n$, and $l \ge 1$, set
	\begin{align}\label{eq.y}
		\begin{split}
			y_{m}(x_l)&=g_{\alpha,1}^m(x_l)=g_{\alpha,1}^m(g_{\alpha,1}^l)(x_{0})=g_{\alpha,1}^{l+m}(x_{0})=x_{l+m} \quad \quad \text{ for } l \ge 1,\\
			y_{m}(x_l^\prime)&=g_{\alpha,d}^m(x_l^\prime)=g_{\alpha,d}^m(g_{\alpha,d}^l)(x_{0}^\prime)=g_{\alpha,d}^{l+m}(x_{0}^\prime)=x^\prime_{l+m} \quad \quad \text{ for } l \ge 1.
		\end{split}
	\end{align}
	
	Set
	\begin{equation}\label{eq.pphi}
		\phi= \phi_{n+1-m}-h_{\alpha},
	\end{equation}
	with $\mathcal{L}_{\alpha}^n(\mathbf{1})=\phi_n$. Hence,
	\begin{equation*}
		\mathcal{L}_{\alpha}^m(\phi)=  \mathcal{L}_{\alpha}^m(\mathcal{L}_{\alpha}^{n+1-m}(\mathbf{1})-h_{\alpha})= \mathcal{L}_{\alpha}^{n+1}(\mathbf{1}-h_{\alpha}),
	\end{equation*}
	and
	\begin{align*}
		(\mathcal{L}^m_\alpha(\phi))^{\prime}(x)
		=\sum_{f_{\alpha}^m(y)=x}\frac{1}{(f_{\alpha}^m)^{\prime}(y)}\cdot \frac{\phi(y)}{(f_{\alpha}^m)^{\prime}(y)}\left(\frac{\phi^{\prime}(y)}{(f_{\alpha}^m)^{\prime}(y)}-\phi(y)\frac{(f_{\alpha}^m)^{\prime \prime}(y)}{((f_{\alpha}^m)^{\prime}(y))^2} \right),
	\end{align*}
	taking $-\bar{x}= \bar{x}^\prime$, we see however that
	\begin{align*}
		\int_{\bar{x}}^{1-\bar{x}^\prime} \vert ( \mathcal{L}_{\alpha}^{n+1}(\mathbf{1}-h_\alpha))^{\prime}(z)\vert \,dz &= \int_{S^1} \left| \chi_{|x|>\bar{x}}\left[( \mathcal{L}_{\alpha}^{n+1}(\mathbf{1}-h_\alpha))^{\prime}(z)\right]\right| \,dz\\
		&\le \left\|\ \chi_{|x|>\bar{x}}\left[( \mathcal{L}_{\alpha}^{n+1}(\mathbf{1}-h_\alpha))^{\prime}(z)\right] \right\|_{1}\\
		&\le \underbrace{\left\| \chi_{|y|>y_m}\left| \phi^{\prime}\right| \cdot ((f_{\alpha}^m)^{\prime})^{-1}\right\|_1}_M + \underbrace{\left\| \chi_{|y|>y_m} \cdot \left| \phi\right| \left| (f_{\alpha}^m)^{\prime \prime} ((f_{\alpha}^m)^{\prime})^{-2}\right|\right\|_1}_N
	\end{align*}
	\begin{align*}
		M \le ((f_{\alpha}^m)^{\prime})^{-1} \Vert \chi_{|y|>y_m}\vert \phi^{\prime}\vert \Vert_1 
	\end{align*}
	
	To estimate $((f_{\alpha}^m)^{\prime})^{-1}$ for some $|y| \ge y_m(x_l)$, we use the bounded distortion property, of $f_\alpha$ \cite[Lemma 5]{YL99} on $(y_m, f_\alpha(y_m))= (y_m, y_{m-1})$, by \eqref{eq:zerothderivative}, \eqref{eq.assymp}
	and \eqref{eq.y}
	\begin{align}
		((f_{\alpha}^m)^{\prime})^{-1}&\le C \frac{f_{\alpha}(y_m)-y_m}{f_{\alpha}(x_l)-x_l}\le C \frac{\sgn(y_m) \vert y_m \vert^{\alpha+1}}{\sgn(x_l) \vert x_l \vert^{\alpha+1}}= C \frac{\sgn(x_{l+m}) \vert x_{l+m} \vert^{\alpha+1}}{\sgn(x_l) \vert x_l \vert^{\alpha+1}}\nonumber\\
		&\le C_\alpha \left(1 + \frac{m}{l}\right)^{-(1+1/\alpha)}. \label{eq:emone}
	\end{align}
	
	From \eqref{eq:enprime}, \eqref{eq.y} and  \eqref{eq.pphi} 
	\begin{align}
		\left\| \chi_{|y|>y_m}\vert \phi^{\prime}\vert \right\|_1 &\le \left\| \chi_{|y|>y_m}\left(\vert \phi_{n+1-m}^{\prime}\vert + \vert h^\prime_\alpha \vert\right) \right\|_1 \le C_\alpha \int_{y_m(x_l)}^{y_m(x^\prime_l)}c_2 \left(a_1\vert x \vert^{-(1+\alpha)} + b_1\vert x \vert^{-\alpha}\right)\,dy \nonumber\\
		&\le C_{\alpha}\left( \frac{-a_1c_2\sgn(y)}{\alpha} | y |^{-\alpha} + \frac{b_1c_2 \sgn(y)}{(1-\alpha)}|y|^{(1-\alpha)}\right)\bigg\vert_{y_m(x_l)}^{y_m(x^\prime_l)}\nonumber\\ 
		&\le C_{\alpha}y_m^{-\alpha} \le C_\alpha (l+m). \label{eq:emtwo}
	\end{align}
	
	Hence, we have from \eqref{eq:emone} and \eqref{eq:emtwo} that
	\begin{equation}
		M \le C_\alpha\, l\bigg(1 + \frac{m}{l}\bigg)^{-1/\alpha}. \label{eq:em}
	\end{equation}
	
	From \eqref{eq:LSYest}, we have that
	\begin{align}
		N \le C_{\alpha,m}\Vert \phi_{n+1-m}-h_{\alpha} \Vert_1 \le C_{\alpha,m} (n+1-m)^{1-1/\alpha}, \label{eq:second}
	\end{align}
	where $C_{\alpha,m}= \sup_m \sup_x \vert (f_{\alpha}^m)^{\prime \prime} ((f_{\alpha}^m)^{\prime})^{-2}\vert$.
	
	To estimate the second integral in \eqref{eq:eks}, we use the change of variables, and this gives
	$$\int_{g_{\alpha,i}(\bar{x})}^{1}  X_{\alpha,i}(f_{\alpha}(v))\mathcal{L}_{\alpha}^{n}(\varphi))^{\prime}(v) \,dv$$
	and then proceed as in the first integral.
	
	We choose $l \ge 1$, such that $\bar{x}$ is small enough to make (A) and (C) (see \eqref{eq:eksbar}) small, $m \ge l$ so that 
	\eqref{eq:em} is small and we choose $\beta$ close enough to $\alpha$, so that $n(\beta)$ is large enough to make  
	\eqref{eq:second} small. Which shows what we want for $\psi \in L^\infty(m)$.

	\subsection{Observables in $L^q$} We generalize the result to any $\psi \in L^q(m)$, $(1-\alpha)^{-1} < q < \infty$, for $\alpha \in [0,1)$. Let us define a bounded function
	 $\psi_M(x)= \min \{\psi(x), M\}.$ 
	By the Chebyshev's inequality, for $\psi \in L^q(m)$,
	and letting $\Vert \psi \Vert_{q}=1$ 
	, we have 
	
	\begin{equation}\label{eq.cheb}
		\Leb(\{\psi(x)>M\}) \le {M^{-q}},
	\end{equation}
	and $\Vert \psi - \psi_M \Vert_r \le M^{1-q/r}$, for $r  \ge 1$. Indeed,
	\begin{equation*}\label{eq.pssi}
		\psi_M(x)= 
		\begin{cases}
			\psi(x), & \mbox{if }    \psi(x) \le M;\vspace{.1cm}\\
			M, & \mbox{if } \psi(x)>M,%
		\end{cases}
	\end{equation*}
	From \eqref{eq.cheb}, we have that $\Vert \psi - \psi_M \Vert_r \le M^{1-q/r}.$
%
	Decomposing $\psi(x)$ into $	\psi(x)= (\psi(x)-\psi_M(x)) + \psi_M(x),$
	we show that the limit is well defined (just as in subsection \ref{sec.welldef}) for $\psi \in L^q(m)$. We only need show that $\displaystyle\left|\int \psi \mathcal{L}_\alpha^j \left(F_{\alpha} h_\alpha \right) dx \right|$ is summable.
	
	If $\psi_M(x)= \psi(x)$, the above is trivially true. So, for $\psi_M(x)=M(j):=j^\eta$
	\begin{align*}
		\left|\int \psi \mathcal{L}_\alpha^j \left( F_{\alpha} h_\alpha \right) dx \right| &\le \underbrace{\left|\int (\psi(x)-\psi_M(x)) \mathcal{L}_\alpha^j (F_{\alpha} h_\alpha ) dx \right|}_\text{(I)} + \underbrace{\left|\int \psi_M(x) \mathcal{L}_\alpha^j (F_{\alpha} h_\alpha ) \,dx \right|,}_\text{(II)}
	\end{align*}
	by the H\"older inequality, $F_{\alpha}$ being bounded and from the bounds on $\Vert \psi - \psi_M \Vert_r$ 
	\begin{align*}
		\text{(I)} \le \Vert \psi-\psi_M \Vert_r \cdot \Vert F_{\alpha} \Vert_{r^\prime} \le C j^{\eta(1-q/r)}, \quad \text{ where } 1/r + 1/r^\prime =1
	\end{align*}
	
	We simplify (II) as in \eqref{eq.sum0},
	using \cite[Theorem 2.4.14]{gouezel2004vitesse}
	\begin{align*}
		\text{(II)} &\le j^\eta \int | \mathcal{L}_\alpha^j (F_{\alpha} h_\alpha ) |dx\\
		&\le K_\alpha \frac{1}{j^{(1/\alpha)-\varepsilon-\eta}}.
	\end{align*}
	(I) and (II) are summable for $\displaystyle \frac{1}{(q/r-1)}< \eta <1/\alpha - \varepsilon-1$.
	
	To extend Subsection \ref{sec.lip} to $\psi \in L^q(m)$, we use the same decomposition of $\psi$ and follow a similar calculation as above to show the summability of $A_n$ and $B_n$.
	In subsection \ref{sec.conv}, we take $M(n)=~ n^\eta$, $\eta \in(0, 1/\alpha -1)$. Substituting the decomposition of $\psi$ 
	into \eqref{eq:diff2}, we have that
	\begin{align*}
		\varsigma=&\bigg\{ \int_{S^{1}} \bigg[ ((\psi-\psi_M) \circ f_{\beta}^j)\bigg(\sum_{i \in \{1,d\}}(X_{\alpha,i}\mathcal{N}_{\alpha,i}(\mathcal{L}_{\alpha}^{n(\beta)-j}(\mathbf{1}))^{\prime})\bigg) - \left((\psi-\psi_M) \circ f_{\alpha}^j \right)\\
		&\quad\cdot\bigg(\sum_{i \in \{1,d\}}(X_{\alpha,i}\mathcal{N}_{\alpha,i}(h_{\alpha}))^{\prime}\bigg)\bigg]\,dx\bigg\}
		+\bigg\{ \int_{S^{1}} \bigg[ (\psi_M \circ f_{\beta}^j)\bigg(\sum_{i \in \{1,d\}}(X_{\alpha,i}\mathcal{N}_{\alpha,i}(\mathcal{L}_{\alpha}^{n(\beta)-j}(\mathbf{1}))^{\prime})\bigg)\\
		& \quad\quad\quad\quad\quad\quad\quad\quad\quad\quad\quad\quad\quad\quad\quad\quad\quad\quad\quad- (\psi_M \circ f_{\alpha}^j ) \bigg(\sum_{i \in \{1,d\}}(X_{\alpha,i}\mathcal{N}_{\alpha,i}(h_{\alpha}))^{\prime}\bigg)\bigg]\,dx\bigg\}\\
		&=: (\text{I})+(\text{II}).
	\end{align*}
	\begin{equation*}
		(\text{I}) \le M^{(1-q/r)}\left(\Vert X_{\alpha,i}\mathcal{N}_{\alpha,i}(\mathcal{L}_{\alpha}^{n(\beta)-j}(\mathbf{1}))^{\prime}\Vert_{r^\prime} + \Vert X_{\alpha,i}\mathcal{N}_{\alpha,i}(h_{\alpha}))^{\prime}\Vert_{\infty}\right).
	\end{equation*}
	
	Hence, choosing $M(n)$ big enough, we can bound the above by $\eta/2R_{\eta}$.
	$(\text{II}) \to 0$ as $\beta \to \alpha$. Now, we can choose $\beta= \beta(\eta, M(n))$ such that $|(\text{II})|<\eta/2R_{\eta}$.

	\section{Statistical stability for the solenoid map}
Here, we present an example of a degree 2 circle map that satisfies the conditions \ref{s1}-\ref{s3} in section \ref{se.inte}. For $0<\alpha<1$, Let
	
	\begin{equation*}
		f_{\alpha}(x)=
		\begin{cases}
			x(1+2^{\alpha} x^{\alpha}),  & 0\leq x <\frac{1}{2}\\
			x-2^{\alpha} (1-x)^{(\alpha + 1)},  & \frac{1}{2}\leq x \leq1.
		\end{cases}
	\end{equation*}

	We now proceed to verify the assumptions 
	\ref{A1}-\ref{A3}. Indeed, for $\alpha\in (0,1)$ and $x \in I_1$, $\partial_{\alpha}f_{\alpha,1}(x)= 2^{\alpha} x^{\alpha+1} \ln (2x)$, and for $x \in I_2$, $\partial_{\alpha}f_{\alpha,2}(x)=~-2^{\alpha}~ (1-~x)^{\alpha+1} ~\ln (2(1-~x))$. This verifies the assumption \ref{A1}.
	To verify 
	assumption \ref{A2}, observe that $I_{1,+}=I_{2,-}=\frac{1}{2}$, $\partial_{\alpha}f_{\alpha,1}(\frac{1}{2})=\partial_{\alpha}f_{\alpha,2}(\frac{1}{2})=0$. The assumption \ref{A3} is easily verified.

	Consider a solid torus $M=S^1 \times \mathbb{D}$, where $\mathbb{D}$ is the unit disk, $F_{\alpha}:M \to M$  a skew product defined 
	
	\begin{equation}\label{eq.solll}
		(x,\mathbf{y}) \mapsto (f_{\alpha}(x), g_x(\mathbf{y})) := F_{\alpha}(x,\mathbf{y})=\left(f_{\alpha}(x), \frac{1}{2}\cos(2\pi x)+ \frac{1}{5}y, \frac{1}{2}\sin(2\pi x)+ \frac{1}{5}z \right)
	\end{equation}
	$f_{\alpha}:S^1 \to S^1$, the intermittent circle map, with an ergodic SRB measure $\mu_\alpha$. since contrary to the classical solenoid attractor, the solenoid with intermittency introduced in \cite{AP08} has the intermittent map in the base dynamics. 
%
%
	Clearly, from \eqref{eq.solll} there exists a semi-conjugacy between $f_\alpha$ and $F_\alpha$, given by $	f_\alpha \circ \pi= \pi \circ F_\alpha,$
	with $\pi:M \to S^1$, the natural projection. The next result gives the existence of a probability measure on $M$.
	\begin{lemma} \cite[Lemma 4.31]{JFA20}\label{lift}
		For $0< \alpha<1$, there exists an F-invariant Borel probability measure $\eta_\alpha$ on $M$ such that $\pi_*\eta_\alpha=\mu_\alpha$. Moreover, the support of $\eta_\alpha$ coincides with $\Omega$.
	\end{lemma}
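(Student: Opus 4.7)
The strategy is the classical one for skew products with uniformly contracting fibers: produce $\eta_\alpha$ as a weak-$*$ limit of push-forwards, obtain $F_\alpha$-invariance by Krylov--Bogol\'yubov, verify the projection identity from the semi-conjugacy, and deduce the support statement from the attractor structure plus fiber contraction.

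First I would choose any Borel probability $\nu_0$ on $M$ that projects to $\mu_\alpha$, for instance $\nu_0 = \mu_\alpha \otimes \text{Leb}_{\mathbb{D}}$, and form the Ces\`aro averages $\nu_n := \frac{1}{n}\sum_{k=0}^{n-1}(F_\alpha^k)_*\nu_0$. Since $M$ is compact, Banach--Alaoglu yields a weak-$*$ convergent subsequence with limit $\eta_\alpha$, and continuity of $F_\alpha$ on the compact space $M$ is exactly what makes the standard Krylov--Bogol\'yubov argument produce $F_\alpha$-invariance of $\eta_\alpha$. For the projection identity, the semi-conjugacy $\pi\circ F_\alpha = f_\alpha\circ\pi$ combined with $f_\alpha$-invariance of $\mu_\alpha$ gives $\pi_*(F_\alpha^k)_*\nu_0 = (f_\alpha^k)_*\pi_*\nu_0 = \mu_\alpha$ for every $k$; this identity passes to each $\nu_n$ and, by continuity of $\pi_*$ in the weak-$*$ topology, to the limit $\eta_\alpha$.

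For the support claim, set $\Omega_k := \overline{F_\alpha^k(M)}$. Because $F_\alpha$ contracts each disk fiber by the factor $1/5$, the $\Omega_k$ form a decreasing sequence of compact sets whose Hausdorff diameter on fibers shrinks geometrically and whose intersection is the attractor $\Omega$. Since $\mathrm{supp}((F_\alpha^k)_*\nu_0) \subset \Omega_k$, the weak-$*$ limit satisfies $\mathrm{supp}(\eta_\alpha) \subset \bigcap_k \Omega_k = \Omega$. For the reverse inclusion, $\mathrm{supp}(\eta_\alpha)$ is a closed $F_\alpha$-invariant subset of $\Omega$, and by $\pi_*\eta_\alpha = \mu_\alpha$ together with the density bound \eqref{eq:dens.bound} (which forces $\mathrm{supp}(\mu_\alpha) = S^1$), its image under $\pi$ equals $S^1$; since $\Omega$ is the unique transitive compact $F_\alpha$-invariant set projecting onto $S^1$ (a feature of the solenoidal construction in \cite{AP08}), we conclude $\mathrm{supp}(\eta_\alpha) = \Omega$.

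The construction and the projection identity are soft consequences of compactness and the semi-conjugacy. The main obstacle is the reverse inclusion in the support statement, which is not forced by fiber contraction alone and requires invoking the transitive/minimal structure of the solenoidal attractor $\Omega$; beyond that, if desired one could also upgrade the argument to uniqueness of $\eta_\alpha$ by disintegrating against $\mu_\alpha$ and using the $5^{-n}$ contraction to collapse any two fiber conditionals, but this is not needed for the statement as written.
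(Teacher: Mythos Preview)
The paper does not supply its own proof of this lemma; it simply cites \cite[Lemma~4.31]{JFA20}. Your Krylov--Bogol\'yubov construction is correct and is one standard route. However, the identity \eqref{lim.coin}, which the paper also attributes to the same lemma in \cite{JFA20} and relies on crucially in the statistical stability proof, indicates that the argument in the cited reference proceeds differently: one defines $\eta_\alpha$ directly as the positive linear functional
\[
\phi \longmapsto \lim_{k\to\infty}\int (\phi\circ F_\alpha^k)^{+}\,d\mu_\alpha
= \lim_{k\to\infty}\int (\phi\circ F_\alpha^k)^{-}\,d\mu_\alpha
\]
on $C(M)$, the common limit existing because the $1/5$ fiber contraction forces $(\phi\circ F_\alpha^k)^{+}-(\phi\circ F_\alpha^k)^{-}\to 0$ uniformly. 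Riesz then yields the measure, and $F_\alpha$-invariance, the projection identity, uniqueness, and the support statement all fall out of this explicit formula with no subsequence extraction.

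The practical difference matters here: your Ces\`aro-average limit does not a priori give you \eqref{lim.coin}, so you would still need to derive that formula separately before the paper's proof of Theorem~B can proceed. The direct functional construction delivers \eqref{lim.coin} as its very definition, and it also sidesteps the step you flagged as the main obstacle: for the reverse support inclusion one just takes $\phi\ge 0$ positive at some $p\in\Omega$, writes $p=\lim_k F_\alpha^k(q_k)$, and uses $\mathrm{supp}(\mu_\alpha)=S^1$ to see the functional is strictly positive on $\phi$, without needing to invoke transitivity or minimality of $\Omega$.
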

	%
	
	%
	%
	
	Where $\Omega$ is a compact attractor. To keep the notations simple, we drop the subscript for $\mu_\alpha, \eta_\alpha$ and simply write $\mu, \eta$. In what follows, we will show that the measure $\eta$ in Lemma \ref{lift}  is in fact the unique $SRB$ measure of $F$. 
	\begin{proposition}
		Suppose that $\mu$ is an SRB measure and $\pi_*\eta= \mu$, then $\eta$ is the unique SRB measure.
	\end{proposition}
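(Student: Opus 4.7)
The plan is to exploit the fact that the fiber maps $g_x$ in \eqref{eq.solll} are uniform $\tfrac{1}{5}$-contractions of $\mathbb{D}$, so that the fibers $\{x\}\times\mathbb{D}$ play the role of strong-stable manifolds. Concretely, for every $x\in S^1$ and $\mathbf{y}_1,\mathbf{y}_2\in\mathbb{D}$,
\[
 d\bigl(F_\alpha^{\,n}(x,\mathbf{y}_1),\,F_\alpha^{\,n}(x,\mathbf{y}_2)\bigr)\;\le\;5^{-n}\,d(\mathbf{y}_1,\mathbf{y}_2).
\]
I will use this in two complementary ways: first, to show that any $F_\alpha$-invariant probability measure that projects to $\mu$ is already determined by $\mu$ alone; second, to show that any SRB measure for $F_\alpha$ must project to the SRB measure of $f_\alpha$.

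For the first step, let $\nu$ be any $F_\alpha$-invariant probability measure on $M$ with $\pi_*\nu=\mu$. Fix a continuous $\varphi\colon M\to\R$ with modulus of continuity $\omega_\varphi$ on the compact space $M$, and introduce the section $s(x)=(x,\mathbf{0})$. Using $F_\alpha$-invariance and the above contraction,
\[
 \int\varphi\,d\nu\;=\;\int\varphi\circ F_\alpha^{\,n}\,d\nu\;=\;\int \varphi\bigl(F_\alpha^{\,n}\!\circ s(x)\bigr)\,d\mu(x)\;+\;\mathcal{O}\!\bigl(\omega_\varphi(5^{-n})\bigr).
\]
The leading term depends only on $\mu$, so letting $n\to\infty$ pins down $\int\varphi\,d\nu$ in terms of $\mu$ alone. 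Since this holds for every continuous $\varphi$, two such lifts $\nu_1,\nu_2$ of $\mu$ coincide; in particular, among $F_\alpha$-invariant probability measures projecting to $\mu$, $\eta$ is the only candidate.

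For the second step, let $\nu$ be any SRB measure for $F_\alpha$ and $B(\nu)\subset M$ its basin, a set of positive Lebesgue measure. For any $(x,\mathbf{y})\in B(\nu)$ and $\psi\in C(S^1)$, applying the Birkhoff sum of $\varphi=\psi\circ\pi$ and using $\pi\circ F_\alpha=f_\alpha\circ\pi$ gives
\[
 \tfrac{1}{n}\sum_{k=0}^{n-1}\psi(f_\alpha^{\,k}(x))\;\longrightarrow\;\int\psi\,d(\pi_*\nu).
\]
By Fubini, a positive-Lebesgue set of $x\in S^1$ satisfies this, so $\pi_*\nu$ is a physical measure for $f_\alpha$; uniqueness of the SRB measure of the intermittent circle map (Section~\ref{se.inte}) forces $\pi_*\nu=\mu$, and the first step then yields $\nu=\eta$. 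The same contraction argument, applied with $\nu=\eta$, shows that the basin of $\eta$ is $\pi$-saturated modulo null sets and contains $\pi^{-1}(B(\mu))$, a set of full Lebesgue measure in $M$, so $\eta$ is itself SRB. The main obstacle I foresee is justifying that $\pi_*\nu$ is genuinely a physical measure of $f_\alpha$ rather than merely invariant; this is where uniform fiber contraction combines decisively with the uniqueness of $\mu$ for the base dynamics.
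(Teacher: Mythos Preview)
Your argument is correct and takes a genuinely different route from the paper. The paper's proof works through the Young tower machinery of \cite{JFA20}: both $\mu$ and $\eta$ are realized as SRB measures arising from a Gibbs--Markov--Young structure with a common return time $R$ that is constant on stable disks, and the computation simply checks that the explicit tower formula $\eta=\big(\sum_j \eta_0\{R>j\}\big)^{-1}\sum_j F_*^j(\eta_0\mid\{R>j\})$ pushes forward under $\pi$ to the corresponding formula for $\mu$; uniqueness is then inherited from the general tower theory cited there. Your proof bypasses the tower entirely: you use the uniform $1/5$-contraction of fibers to show that any $F_\alpha$-invariant lift of $\mu$ is already determined by $\mu$, and then a basin/Fubini argument to force any SRB measure of $F_\alpha$ to project to $\mu$. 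The tower approach buys a direct structural link between $\eta$ and $\mu$ that is useful for transferring quantitative statistical properties (rates of mixing, limit theorems) from base to skew product, whereas your argument is more elementary, self-contained, and immediately generalizes to any skew product with uniformly contracting fibers over a base admitting a unique physical measure; it also gives, for free, that the basin of $\eta$ has full Lebesgue measure. One small point to tighten: in your second step you conclude that $\pi_*\nu$ is a \emph{physical} measure for $f_\alpha$, and you then invoke uniqueness of the SRB measure; make explicit that for these intermittent circle maps the ACIP $\mu$ has full-measure basin (Lebesgue-a.e.\ point is $\mu$-generic), so that any physical measure necessarily coincides with $\mu$.
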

	\begin{proof}
		We know that $R$ is constant on stable disks, hence we have that
		\begin{equation}
			\sum_{j=0}^{\infty} \eta_0\{R > j\}=\sum_{j=0}^{\infty} \eta_0\{R \circ \pi > j\}=\sum_{j=0}^{\infty} \pi_*\eta_0\{R > j\}=\sum_{j=0}^{\infty} \mu_0\{R > j\}. \label{eq:norm}
		\end{equation}
		
		Let $A \subset S^1$ be any Borel measurable set. using \eqref{eq:norm} From \cite[Corollary 3.21]{JFA20}, \cite[Lemma 4.9]{JFA20}, \cite[Lemma 4.5]{JFA20}
		,  and the semi-conjugacy property of the projection map, $\pi: M \to S^1$, we get
		\begin{align*}
			\pi_*\eta(A)&= \frac{1}{\sum_{j=0}^{\infty} \mu_0\{R > j\}}\sum_{j=0}^{\infty} F_{*}^{j}(\eta_0 (\pi^{-1}(A))\vert \{R > j\})\\
			&= \frac{1}{\sum_{j=0}^{\infty} \mu_0\{R > j\}}\sum_{j=0}^{\infty} \eta_0 (F^{-j} \circ \pi^{-1}(A)\vert \{R > j\})\\
			&= \frac{1}{\sum_{j=0}^{\infty} \mu_0\{R > j\}}\sum_{j=0}^{\infty} f_*^{j}\pi_*(\eta_0 (A)\vert \{R > j\})\\
			&= \frac{1}{\sum_{j=0}^{\infty} \mu_0\{R > j\}}\sum_{j=0}^{\infty} f_*^{j}(\mu_0 (A) \vert \{R > j\})= \mu(A).
		\end{align*}
	\end{proof}

	Since the $\eta$ is the unique SRB measure of $F$, we now show that it is statistically stable. 
	For any continuous $\phi:M \to \R$, let $\phi^{+/-}:S^1 \to \R$ defined for each $x \in S^1$ by
	\begin{equation*}
		\phi^-(x)=\inf_{p \in \pi^{-1}(x)} \phi(p) \quad \qand \quad 	\phi^+(x)=\sup_{p \in \pi^{-1}(x)} \phi(p).
	\end{equation*}
	From \cite[Lemma 4.31]{JFA20}, 
	\begin{equation} \label{lim.coin}
		\int \phi\, d\eta_{\alpha}= \lim_{k\to \infty} \int (\phi \circ F^{k}_\alpha)^+\, d\mu_\alpha= \lim_{k\to \infty} \int (\phi \circ F^{k}_\alpha)^-\, d\mu_\alpha.
	\end{equation}
	
	For ease of notation, we shall write the subscript $\alpha_n$ simply as $n$ and the map and measure at $\alpha_0$ as $F$ and $\mu$ respectively.
	The density of $f_\alpha$, $h_\alpha \in L^p(m)$.
	
	%

	\begin{lemma}\label{lem.lem} For any $k \ge 1$, we have
		$$\displaystyle \lim_{n \to \infty} \int (\phi \circ F^k_n)^+\, d\mu_n= \int (\phi \circ F^k)^+\, d\mu$$	
	\end{lemma}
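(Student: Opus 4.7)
The plan is to split the difference into two pieces, one handled by uniform convergence of the integrand and one by weak$^*$ convergence of $\mu_n$ to $\mu$, which is guaranteed by the statistical stability implied by Theorem \ref{LR}. Concretely, I will write
\begin{align*}
	\int (\phi \circ F_n^k)^+\, d\mu_n - \int (\phi \circ F^k)^+\, d\mu
	&= \int \bigl[(\phi \circ F_n^k)^+ - (\phi \circ F^k)^+\bigr]\, d\mu_n \\
	&\quad + \left[ \int (\phi \circ F^k)^+\, d\mu_n - \int (\phi \circ F^k)^+\, d\mu\right],
\end{align*}
and show that both brackets vanish as $n\to\infty$.

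First, I will verify that $F_\alpha$ depends continuously on $\alpha$ in the $C^0(M)$ topology. Inspecting the definition \eqref{eq.solll}, the fiber map $g_x$ is independent of $\alpha$, so it is enough to observe that $\alpha\mapsto f_\alpha$ is $C^0$-continuous, which is immediate from the explicit formula for $f_\alpha$ given in the example. By induction on $k$ and the chain-compatible boundedness of the derivatives, $F_n^k \to F^k$ uniformly on the compact solid torus $M$. Since $\phi$ is continuous on the compact set $M$ it is uniformly continuous, so $\phi\circ F_n^k \to \phi\circ F^k$ uniformly on $M$.

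Next, because the projection is $\pi(x,\mathbf y)=x$, each fiber is $\pi^{-1}(x)=\{x\}\times \mathbb D$, and for any $x\in S^1$
\begin{equation*}
	\bigl| (\phi\circ F_n^k)^+(x) - (\phi\circ F^k)^+(x)\bigr| \;\le\; \sup_{p\in \pi^{-1}(x)} \bigl|\phi(F_n^k(p))-\phi(F^k(p))\bigr| \;\le\; \|\phi\circ F_n^k -\phi\circ F^k\|_\infty,
\end{equation*}
so $(\phi\circ F_n^k)^+ \to (\phi\circ F^k)^+$ uniformly on $S^1$. Consequently the first bracket is bounded by this sup norm (since each $\mu_n$ is a probability measure) and tends to zero. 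Moreover, uniform continuity of $\phi\circ F^k$ on $M$ together with compactness of the fiber $\mathbb D$ yields continuity of $(\phi\circ F^k)^+$ on $S^1$.

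Finally, Theorem \ref{LR} gives linear response, hence in particular weak$^*$ convergence $\mu_{\alpha_n}\to \mu_{\alpha_0}$ as $\alpha_n\to\alpha_0$, when tested against continuous observables on $S^1$. Applied to the continuous function $(\phi\circ F^k)^+$ this makes the second bracket vanish, completing the proof. The only delicate point is ensuring that the limit function $(\phi\circ F^k)^+$ is genuinely continuous (so that weak$^*$ convergence can be applied), which is where compactness of the fiber and uniform continuity of $\phi\circ F^k$ enter; all other steps are routine once the $C^0$-continuity of $\alpha\mapsto F_\alpha$ and the statistical stability of $\mu_\alpha$ are in hand.
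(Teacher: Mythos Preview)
Your proof is correct and follows essentially the same approach as the paper: both split the difference into the same two pieces, handling the first via convergence of the integrand and the second via statistical stability from Theorem~\ref{LR}. The only difference is that for the first term the paper applies H\"older's inequality (using $h_n\in L^p(m)$) whereas you bound it directly by the sup norm using that $\mu_n$ is a probability measure and that $(\phi\circ F_n^k)^+\to(\phi\circ F^k)^+$ uniformly; your route is slightly more elementary but the idea is the same.
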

	\begin{proof}
		\begin{align*}
			\bigg|\int (\phi \circ F^k_n)^+\, d\mu_n- & \int (\phi \circ F^k)^+\, d\mu \bigg| \le\\
			& \underbrace{\bigg| \int\left( (\phi \circ F^k_n)^+\, - (\phi \circ F^k)^+\right)\, d\mu_n \bigg|}_{(A)} +\underbrace{\left| \int (\phi \circ F^k)^+\, d\mu_n-\int (\phi \circ F^k)^+\, d\mu\right|}_{(B)}.
		\end{align*}
		
		By the H\"older inequality, we have that
		\begin{align*}
			(A) &\le \int \left|h_n\left[(\phi \circ F^k_n)^+- (\phi \circ F^k)^+\right]\right| \,dm\\
			&\le \|h_n\|_p \left(\int \left|(\phi \circ F^k_n)^+ - (\phi \circ F^k)^+\right|^q\, dm\right)^{1/q}, \quad \text{ with } \frac{1}{p}+\frac{1}{q}=1.
		\end{align*}
		
		For a fixed $k$, since the solenoid maps are continuous, we have that for $n$ big enough, $(A) \to 0$.
		The linear response result in Theorem \ref{LR} implies statistical stability on $S^1$. Hence, $(B) \to 0$ as $n \to \infty$.
	\end{proof}
	\begin{maintheorem}
		$\displaystyle \lim_{n\to \infty} \int \phi \,d\eta_{n}= \int \phi \,d\eta.$
	\end{maintheorem}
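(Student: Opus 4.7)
The plan is to combine the sandwich identity \eqref{lim.coin}, available at every parameter $\alpha_n$, with the fixed-$k$ convergence from Lemma \ref{lem.lem} via a standard $\varepsilon/3$ argument. The key feature that makes the scheme work is that the contraction rate $1/5$ in the fiber coordinates of the solenoid $F_\alpha$ in \eqref{eq.solll} is \emph{independent of $\alpha$}, so the ``fiber oscillation'' $(\phi\circ F^k_n)^+ - (\phi\circ F^k_n)^-$ will tend to zero uniformly in $n$ as $k\to\infty$.

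More concretely, for any $x\in S^1$ and any $p_1,p_2\in\pi^{-1}(x)$, iterating \eqref{eq.solll} gives
\[
d\bigl(F^k_n(p_1),F^k_n(p_2)\bigr)\le 5^{-k}\,d(p_1,p_2)\le 5^{-k}\,\diam(\mathbb{D}),
\]
with constant independent of $n$. Combined with the uniform continuity of $\phi$ on the compact manifold $M$, this yields $\|(\phi\circ F^k_n)^+ - (\phi\circ F^k_n)^-\|_\infty \le \delta_k$ for some $\delta_k\to 0$ as $k\to\infty$ that is independent of $n$. Given $\varepsilon>0$, I would first pick $k=k(\varepsilon)$ so large that $\delta_k<\varepsilon/3$; then \eqref{lim.coin} (applied at each $F_n$ and at $F$, justified because each $\eta_n$ is the unique SRB of $F_n$ by the proposition just proved) controls $|\int\phi\,d\eta_n - \int(\phi\circ F^k_n)^+\,d\mu_n|$ and the analogous quantity without the subscript each by $\varepsilon/3$. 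With $k$ now fixed, Lemma \ref{lem.lem} gives the remaining estimate $|\int(\phi\circ F^k_n)^+\,d\mu_n - \int(\phi\circ F^k)^+\,d\mu|<\varepsilon/3$ for $n\ge N(\varepsilon)$, and the triangle inequality closes the proof.

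The step I expect to require the most attention is the uniform-in-$n$ smallness of $\delta_k$: one needs to be sure that the bound on the fiber oscillation really uses only the contraction in the $(y,z)$-variables, which is $\alpha$-independent by inspection of \eqref{eq.solll}, and that the fluctuation in the base coordinate plays no role (this is automatic since both points $F^k_n(p_1)$ and $F^k_n(p_2)$ sit over the \emph{same} point $f^k_n(x)$). Once this uniformity is in hand, the argument is a clean assembly of Lemma \ref{lem.lem}, the identity \eqref{lim.coin}, and the uniform continuity of $\phi$, and no further input from the linear response machinery is needed beyond what was already used to obtain Lemma \ref{lem.lem}.
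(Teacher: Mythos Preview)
Your proposal is correct and follows essentially the same approach as the paper: both arguments exploit the $\alpha$-independent fiber contraction factor $1/5$ in \eqref{eq.solll} to get uniformity in $n$, then combine \eqref{lim.coin} with Lemma~\ref{lem.lem}. The only cosmetic difference is packaging---the paper shows the double sequence $\int(\phi\circ F_n^{k})^+\,d\mu_n$ is uniformly Cauchy in $k$ and interchanges limits, whereas you run a direct $\varepsilon/3$ using the sandwich inequality $\int(\phi\circ F_n^k)^-\,d\mu_n\le\int\phi\,d\eta_n\le\int(\phi\circ F_n^k)^+\,d\mu_n$ (which underlies \eqref{lim.coin} via $\pi_*\eta_n=\mu_n$ and $F_n$-invariance); both are the same argument.
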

	\begin{proof}
		Given an arbitrary $\varepsilon>0$, take $\delta>0$ such that  $d(p,q) < \delta \,  \Rightarrow \, |\phi(p)-\phi(q)| < \varepsilon$, for $p,q \in M$. We have that each disk $\pi^{-1}(x)$, is contracted by $F_n$ by a factor of $1/5$. There is a $k_0 \ge 1$ such that for all $k \ge k_0$ and $x \in S^1$
		$$
		\diam(F_n^k(\pi^{-1}(x)))< \delta.
		$$
		
		For all $k \ge k_0$ and $m\ge 0$
		\begin{align*}
			(\phi \circ F^{k+m}_n)^+(x)-(\phi \circ F^{k}_n)^+(f^m(x))=\sup(\phi \circ F^{k+m}_n|_{\pi^{-1}(x)})-\sup(\phi \circ F^{k}_n|_{\pi^{-1}(f^m(x))}).
		\end{align*}
		
		Since $F^{k+m}_n({\pi^{-1}(x)}) \subset F^{k}_n({\pi^{-1}(f^m(x))})$, this then implies that the above equation is bounded by
		\begin{align*}
			\sup(\phi \circ F^{k+m}_n|_{\pi^{-1}(x)})-\sup(\phi \circ F^{k}_n&|_{\pi^{-1}(f^m(x))}) \\
			&\le \sup(\phi \circ F^{k+m}_n|_{\pi^{-1}(x)})-\inf(\phi \circ F^{k+m}_n|_{\pi^{-1}(x)})
			<\varepsilon.
		\end{align*}
		
		By the invariance of $\mu_n$, we have that $\displaystyle\left(\int (\phi \circ F^{k+m}_n)^+(x)\, d\mu_n\right)_{k,n}$ is uniformly Cauchy. Therefore, 
		\begin{align*}
			\lim_{n\to \infty}\lim_{k\to \infty} \int (\phi \circ F^{k}_n)^+(x) \, d\mu_n=\lim_{k\to \infty}\lim_{n\to \infty} \int (\phi \circ F^{k}_n)^+(x)\, d\mu_n,
		\end{align*}
		and from equation \eqref{lim.coin} and Lemma \ref{lem.lem}
		\begin{align*}
			\lim_{n\to \infty}\int \phi \,d\eta_{n}=\lim_{k\to \infty}\int (\phi \circ F^k)^+\, d\mu,
		\end{align*}
		using equation \eqref{lim.coin} again, completes the proof.
	\end{proof}

\bibliographystyle{acm}

\end{document}